\documentclass{amsart}

\usepackage{amsmath}
\usepackage{graphicx}
\usepackage[colorlinks=true, allcolors=blue]{hyperref}
\usepackage{amsfonts}
\usepackage{amssymb}
\usepackage{amsthm}
\usepackage{amscd}
\usepackage{enumerate}
\usepackage{todonotes}

\newtheorem{thm}{Theorem}[section]
\newtheorem{lem}[thm]{Lemma}
\newtheorem{cor}[thm]{Corollary}
\newtheorem{defi}[thm]{Definition}

\newtheorem{ques}[thm]{Question}
\newtheorem{conj}[thm]{Conjecture}
\newtheorem{remark}{Remark}[thm]

\newcommand{\cy}{\nu}

\newcommand{\bb}{\mathbb}

\newcommand{\mcal}{\mathcal}

\newcommand{\sym}{\mathrm{Sym}}
\newcommand{\st}{\mathrm{c}}
\newcommand{\ff}[1]{^{\underline{#1}}}

\title{Intersecting families of permutations with a fixed number of cycles.}
\author{Venkata Raghu Tej Pantangi}
\email{pvrt1990@gmail.com}
\begin{document}

\begin{abstract}
Let $\sym(n,k)$ denote the set of permutations on $\{1,2,\ldots,n\}$ with exactly $k$ cycles. A family $\mcal{F}\subset\sym(n,k)$ is said to be intersecting if $\sigma^{-1}\tau$ has a fixed point for all $\sigma,\tau\in\mcal{F}$. In this paper, we investigate the size and structure of maximum-sized intersecting families of permutations in $\sym(n,k)$. In the regime $k\leq n^{0.25}$, we show that every maximum-sized intersecting family is a star, meaning it consists of all permutations in $\sym(n,k)$ that agree at a given point in $[n]$. We establish this result by proving a stronger stability result that bounds the maximum possible size of a non-centred intersecting family. Specifically, in the regime $k\leq n^{0.25}$, the size of any non-centred intersecting family is at most $\left(2/3+o(1)\right)$ times the maximum possible size of a star. In the tighter polylogarithmic regime $k\leq (\ln n)^{d}$, we improve this bound to $\left(1-1/e+o(1)\right)$ times the maximum possible size of a star; we show that this bound is asymptotically sharp. Thus, we establish both an Erd\H{o}s--Ko--Rado theorem and its corresponding stability version for $\sym(n,k)$.
\end{abstract}
\maketitle 
\section{Introduction}
A family of subsets of a finite set is said to be {\it intersecting} if it contains no disjoint pairs. Given a finite set $X$ and a positive integer $k$, let $2^{X}$ and $\binom{X}{k}$ denote the set of all subsets and all $k$-element subsets of $X$, respectively. Given $\mcal{F}\subset 2^{X}$ and  $x\in X$, the family $\mcal{F}[x]=\{Z\in \mcal{F}\colon\ x\in Z\}$ is an intersecting family, called the {\it star centred at $x$}.
Provided $|X|>2k$, the celebrated Erd\H{o}s--Ko--Rado (EKR) theorem \cite{EKR1961} states that if $\mcal{S} \subset \binom{X}{k}$ is an intersecting family, then $|\mcal{S}|\leq \binom{|X|-1}{k-1}$, with equality if and only if $\mcal{S}$ is a {\it star}.  

The problem of characterizing maximum-sized intersecting families can be posed in the context of any other domain that possesses a notion of intersection. In many of these settings, analogues of stars emerge as maximum-sized intersecting families. For instance, in the domain of $k$-subspaces of a finite $n$-dimensional vector space, a star is a family of all $k$-subspaces containing a fixed non-zero vector. Frankl and Wilson \cite{FW86} showed that, provided $n>2k$, every maximum-sized intersecting family of $k$-subspaces is a star; that is an EKR-type theorem is true in this setting.  
Characterizing maximum-sized intersecting families for a wide variety of discrete structures is a central topic in extremal combinatorics. EKR-type theorems have been established for wide array of domains, from highly structured ones such as linear subspaces \cite{FW86}, permutations \cite{DezaFrankl77}, and integer sequences \cite{FT} to more general and structurally diverse domains such as partitions \cite{kupavskii2026erd}, independent sets in graphs \cite{HT}, and spanning trees \cite{Trees}. Ellis \cite{Ellissurvey} wrote an excellent survey of some of these developments, while the monograph \cite{EKRbook} by Godsil and Meagher provides a comprehensive exposition of the field.

In this paper, we consider intersecting families of permutations. For a positive integer $n$, $\sym(n)$ denotes the group of all permutations on $[n]:=\{1,2,\ldots,n\}$. Identifying each $\sigma \in \sym(n)$ with its graph $\{(i,\sigma(i))\colon \ i \in [n]\} \subset [n]\times [n]$ induces a notion of intersection on $\sym(n)$; that is, $\sigma,\tau \in \sym(n)$ are {\it intersecting} if and only if $\sigma(i)=\tau(i)$ for some $i\in[n]$. An {\it intersecting} family of permutations is a family $\mcal{F} \subset \sym(n)$ of pairwise intersecting permutations. Given $i,j \in [n]$, the {\it star centred at $(i,j)$} is the intersecting family $\sym(n)[(i,j)]:=\{\sigma\in \sym(n)\colon\ \sigma(i)=j \}$, which has size $(n-1)!$. Deza and Frankl \cite{DezaFrankl77} showed every intersecting family $\mcal{F}\subset \sym(n)$ satisfies $|\mcal{F}| \leq (n-1)!$ and conjectured that equality holds if and only if $\mcal{F}$ is a star. This conjecture was independently proved by Cameron and Ku \cite{CameronKu2003} and Larose and Malvenuto \cite{LaroseMalvenuto2004}, establishing an EKR analogue for permutations.

Given $k\in [n]$, let $\sym(n,k)$ be the family of permutations whose cycle type has exactly $k$ parts. The size of $\sym(n,k)$ is given by $\st(n,k)$, the unsigned Stirling number of the first kind. The domain $\sym(n,k)$ inherits a notion of intersection from $\sym(n)$. Stars in this domain take the form $\sym(n,k)[(i,j)]:=\sym(n,k) \cap \sym(n)[(i,j)]$ for $i,j\in [n]$.

It is natural to ask the following question.
\begin{ques}\label{ques:ekr}
Does the EKR analogue hold for $\sym(n,k)$; that is, are the\\ 
maximum-sized intersecting families in $\sym(n,k)$ necessarily stars?
\end{ques}

The EKR analogue does not hold in general when $k>3n/4$. Indeed, every permutation in $\sym(n,k)$ with $k>3n/4$ has at least $2k-n>n/2$ fixed points. Consequently, any two permutations share a fixed point, and hence $\sym(n,k)$ itself is an intersecting family. Moreover, $\sym(n,k)$ is a star if and only if $k=n$.

For $n$ sufficiently large and $k\leq n^{0.25}$, we show that every maximum-sized intersecting family in $\sym(n,k)$ is a star. Our EKR analogue follows from a stronger {\it stability} result, which bounds the size of every {\it non-centred} intersecting family. 

An intersecting family is said to be {\it centred} if it is contained in a star. By {\it stability}, we mean the phenomenon where any intersecting family of size close to the maximum size is centred. For $k$-sets in $[n]$, the family 
\[\mcal{H}_{n,k}:=\{Z \in \binom{[n]}{k}\colon\ 1\in Z\ \&\ \{2,3,\ldots, k+1\}\cap Z\neq \emptyset \} \cup \{\{2,3,\ldots, k+1\}\},\] is a natural example of a non-centred intersecting family. Provided $n>2k>6$, Hilton and Milner \cite{HM67} showed that if $\mcal{F}\subset \binom{[n]}{k}$ is a non-centred intersecting family, then $|\mcal{F}| \leq |\mcal{H}_{n,k}|=\binom{n-1}{k-1}-\binom{n-k-1}{k-1}+1$, with equality if and only if $\mcal{F}$ is isomorphic to $\mcal{H}_{n,k}$.

A Hilton--Milner-type characterization for intersecting families in $\sym(n)$ was conjectured by Cameron--Ku~\cite{DezaFrankl77} and proved by Ellis~\cite{Ellisstability}. A naturally occurring non-centred family is
\[\mcal{C}:=\{\sigma\in \sym(n)\colon\ \sigma(1)=1,\ \sigma(i)=i\ \text{for some $i>2$}\} \cup \{(1,2)\}\}.\] For sufficiently large $n$, Ellis \cite{Ellisstability} showed that if $\mcal{F}\subset \sym(n)$ is a non-centred intersecting family, then $|\mcal{F}| \leq|\mcal{C}|$, with equality if and only if $\mcal{F}=\pi\mcal{C}\tau$ for some $\pi,\tau\in\sym(n)$. 

For $n$ large enough and $k\leq n^{0.25}$, we obtain a natural analogue of the Hilton--Milner characterization for $\sym(n,k)$, by determining an upper bound on the size of a non-centred intersecting family. The restriction $k\leq n^{0.25}$ is due to the limitations of our methodology, a point we discuss in \S~\ref{sec:der} and \S~\ref{sec:conclusion}.

Before stating our main results, we briefly discuss sizes of stars. Recall that $\st(m,l)$ denotes  the number of permutations on $[m]$ with exactly $l$ cycles. Elementary counting (see Lemma~\ref{lem:size}) yields \[|\sym(n,k)[(i,j)]| = \begin{cases}    
 \st(n-1, k-1) & \text{if $i= j$,} \\[2ex]
 \st(n-1,k) & \text{otherwise.}
 \end{cases}\]
It is well known that the sequence $\left( \st(m,1),\ \st(m,2),\ldots, \st(m,m) \right)$ is {\it unimodal}, that is, there exists $1<k_{m}< m$ such that
\[\st(m,1)<\st(m,2)<\cdots<\st(m,k_{m})>\st(m,k_{m}+1)\cdots >\st(m,m).\] It is also known that 
$k_{m} \in \left\{\left\lceil\sum\limits_{i=1}^{m} \dfrac{1}{i} \right\rceil,\ \left\lfloor \sum\limits_{i=1}^{m} \dfrac{1}{i} \right\rfloor\right\}$. 
These facts are attributed to Hammersley~\cite{Ham} and Erd\H{o}s~\cite{Erdos}; see \cite{Sibuya} for a modern exposition.
Thus, provided $k\leq k_{n-1}\approx \sum\limits_{i=1}^{n-1} \frac{1}{i}$, stars of the form $\sym(n,k)[(i,j)]$ for $i,j\in [n]$ with $i\neq j$ are larger, and when $k>k_{n-1}$, stars of the form  $\sym(n,k)[(i,i)]$ are larger. 

We now state our main results.
\begin{thm}\label{thm:stability}
Given $\xi>0$, for $n$ sufficiently large and $k\leq n^{0.25}$, if $\mcal{F}\subset \sym(n,k)$ is a non-centred intersecting family, then 
\[|\mcal{F}| \leq \left(\dfrac{2}{3} +\xi\right)\times \max\left\{ \st(n-1,k),\ \st(n-1,k-1) \right\}.\]
\end{thm}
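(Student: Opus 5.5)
The plan is a combinatorial stability argument in the spirit of Ellis's Hilton--Milner theorem for $\sym(n)$. The idea is to exploit that a non-centred family must intersect two permutations that disagree on a common point, and then count how few permutations in $\sym(n,k)$ can do so. The key quantitative input is that for $k\le n^{0.25}$, fixing the images of a bounded number $t$ of points costs roughly a factor $n^{-t}$. More precisely, uniformly for $i\neq j$ and for fixed points,
\[
|\{\sigma\in\sym(n,k)\colon \sigma(a_1)=b_1,\dots,\sigma(a_t)=b_t\}| = (1+o(1))\,\frac{\st(n-1,k)\ \text{or}\ \st(n-1,k-1)}{n^{t-1}}\cdot(\text{const}),
\]
with errors controlled because $k^2/n\to 0$. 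This is where $k\le n^{0.25}$ enters.

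\textbf{Step 1: estimates.} I first prove the needed estimates. By Lemma~\ref{lem:size} and the standard recursions $\st(m,l)=\st(m-1,l-1)+(m-1)\st(m-1,l)$, I compare $\st(n-1,k)$, $\st(n-1,k-1)$, $\st(n-2,\cdot)$, and so on. I show that the number of $\sigma\in\sym(n,k)$ satisfying $t\ge 2$ prescribed constraints $\sigma(a_s)=b_s$ is at most $(1+o(1))M/n^{t-1}$ up to bounded factors, where $M=\max\{\st(n-1,k),\st(n-1,k-1)\}$. The method is to contract the prescribed arcs: when they form no cycle, this gives a bijection with permutations of $n-t$ points having $k$ cycles with a marked structure; when they close a cycle, the count is of $\sym(n-t',k-c)$.

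\textbf{Step 2: the main count.} Let $\mcal{F}$ be non-centred and choose a point $(i,j)$ maximising $|\mcal{F}\cap\sym(n,k)[(i,j)]|$. Since $\mcal{F}$ is non-centred, there is $\tau\in\mcal{F}$ with $\tau(i)\neq j$. Every $\sigma\in\mcal{F}$ with $\sigma(i)=j$ must agree with $\tau$ somewhere, so
\[
|\mcal{F}\cap \sym(n,k)[(i,j)]|\le \sum_{l\neq i} |\{\sigma\colon \sigma(i)=j,\ \sigma(l)=\tau(l)\}|.
\]
Using Step 1 this sum is $(1-1/e+o(1))M$ heuristically, but for the general regime I only need a cruder inequality. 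The part of $\mcal{F}$ outside the star is then bounded by noting that $\mcal{F}\setminus\sym(n,k)[(i,j)]$ is spread over the other $n-1$ values $\sigma(i)=j'\neq j$. Each class $\mcal{F}\cap\sym(n,k)[(i,j')]$ has size at most the maximal one, and must intersect every member of the big class.

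\textbf{Step 3: the two cases.} I split according to whether some $j'\neq j$ carries a large class (at least $\xi M/10$, say) or not. If no class other than $(i,j)$ is large, then the bulk of $\mcal{F}$ lies in one star but meets a permutation outside it. In this case the Step 2 bound, combined with bounding the off-star part by a second-moment/cross-intersection count, gives $|\mcal{F}|\le(1-1/e+o(1))M+o(M)$, which is below $2/3\cdot M$ only after more care. So the honest target here is to show directly that the total is at most $(2/3+\xi)M$. I would do this by pairing: choose two members $\tau_1,\tau_2$ outside the star with $\tau_1(i)\ne\tau_2(i)$ (or one $\tau$ used twice), and bound $\sigma$ in the star by requiring agreements with both. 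If at least two distinct classes are large, then each large class, being cross-intersecting with the others, is limited by the same agreement counts. Summing over the classes gives at most about $2/3$ of $M$: three classes each constrained to meet the other two yields roughly $M/3$ each, after optimising.

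\textbf{Main obstacle.} The hard step is the cross-intersecting bound in Step 3, namely showing that the classes $\mcal{F}_{j'}=\mcal{F}\cap\sym(n,k)[(i,j')]$ cannot simultaneously be large with total exceeding $(2/3+\xi)M$. My first attempt is to apply the union bound from Step 2 to each class relative to a representative of another class. I then use the fact that the number of $\sigma$ with $\sigma(i)=j$ agreeing with a fixed $\tau$ at a point is, by Step 1, at most $(1+o(1))\,c\,M$ with $c<1$, uniformly in $k\le n^{0.25}$. The constant $2/3$ should come out of optimising these linear inequalities. The error terms require the Stirling ratio estimates $\st(n-1,k-1)/\st(n-1,k)\approx (\ln n)^{-1}k$-type bounds to hold with relative error $O(k^2/n)$, which again uses $k\le n^{0.25}$.
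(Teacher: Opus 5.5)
\textbf{Missing concentration step.} Nothing in your proposal shows that a large intersecting family is contained in a single star up to a negligible remainder. In Step~3 you pass from ``no class $\mcal{F}\cap\sym(n,k)[(i,j')]$ with $j'\neq j$ is large'' to ``the bulk of $\mcal{F}$ lies in one star.'' But smallness of each of the $n-1$ classes says nothing about their sum, and ruling out a large off-star part is exactly the structural statement you never prove. Likewise, $(i,j)$ is only chosen to maximise the star intersection, so you have no bound at all when every star intersection is small; all you get is $|\mcal{F}|\le n\max_{a,b}|\mcal{F}\cap\sym(n,k)[(a,b)]|$. The ``second-moment/cross-intersection count'' and the heuristic of three classes of size roughly $M/3$ each are not arguments, and the latter is not where $2/3$ comes from.

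In the paper, concentration comes from the spread approximation method. By Corollary~\ref{cor:spread}, $\sym(n,k)$ is $r$-spread with $r\approx 3n^{3/4}\ln n/(8e)$. Theorem~\ref{thm:spreadapproximation} then gives an intersecting family $\mcal{S}$ of partial permutations, each of size at most $4\log_2 n$, with $\mcal{F}\setminus\mcal{F}'\subseteq\sym(n,k)[\mcal{S}]$ and $|\mcal{F}'|\le M\ln n/n^{3}$. If $\mcal{S}$ is non-centred, Theorem~\ref{thm:peelinggeneral} gives $|\sym(n,k)[\mcal{S}]|\le M/2$. Otherwise $\mcal{F}\subseteq\sym(n,k)[(r,s)]\cup\mcal{F}'$, which leaves a single Hilton--Milner-type count against one $\pi\in\mcal{F}'$ with $\pi(r)\neq s$.

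\textbf{The quantitative steps also fail.} Even granting concentration, Steps~1 and~2 do not deliver the bound. The sum in Step~2 is the first Bonferroni term $\bb{S}_1\approx M$, not $(1-1/e)M$, so the union bound alone gives nothing below $M$. The paper uses $\bb{S}_1-\bb{S}_2+\bb{S}_3$, with a lower bound on $\bb{S}_2$ (Lemma~\ref{lem:bonflb}) and upper bounds on $\bb{S}_1$ and $\bb{S}_3$ (Lemma~\ref{lem:bonfub}). The constant $2/3=1-\frac12+\frac16$ is exactly this truncation.

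Your Step~1 claim that prescribing $t$ arcs costs $n^{-(t-1)}$ ``up to bounded factors'' is false. If the prescribed arcs close $c$ cycles, the count is $\st(n-t,k-c)$. This exceeds the generic count by a factor of up to about $(Cn^{1/4}/\ln n)^{c}$, which is unbounded. Such configurations have density at most $kz/(n-2)$ among $z$-sets (Lemma~\ref{lem:overestimate}). Their excess contribution to $\bb{S}_z$ is therefore $o(1)\cdot M/z!$ only while $\frac{kz}{n}(Cn^{1/4}/\ln n)^{z}=o(1)$, that is, for $z\le 3$ when $k\le n^{1/4}$. This is where the hypothesis $k\le n^{0.25}$ really enters, not through a $k^2/n$ error. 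It is also why, with these estimates, the alternating sum cannot be pushed to $1-1/e$ in this regime.

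Finally, $1-1/e<2/3$, so a genuine $(1-1/e+o(1))M$ bound would already suffice. The difficulty is proving such a bound, not comparing it with $2/3$.
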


Since every non-centred intersecting family has size strictly smaller than the largest possible size of a star, the EKR analogue follows immediately.

\begin{cor}\label{cor:EKR}
For $n$ sufficiently large and $k\leq n^{0.25}$, if $\mcal{F}\subset \sym(n,k)$ is an intersecting family, then 
\[|\mcal{F}| \leq \max\left\{ \st(n-1,k),\ \st(n-1,k-1) \right\}.\] Moreover, equality holds if and only if $\mcal{F}=\{\rho \in \sym(n,k)\colon\ \rho(i)=j\}$ for some $i,j \in [n]$ such that $\st(n-1,k-\delta_{i,j})=\max\left\{ \st(n-1,k),\ \st(n-1,k-1) \right\}$, where $\delta_{i,j}$ is the standard Kronecker delta function.
\end{cor}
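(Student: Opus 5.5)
The plan is to derive Corollary~\ref{cor:EKR} directly from Theorem~\ref{thm:stability} together with the star sizes recorded in Lemma~\ref{lem:size}. Write $M:=\max\{\st(n-1,k),\ \st(n-1,k-1)\}$ and let $\mcal{F}\subset\sym(n,k)$ be intersecting. There are two cases, according to whether $\mcal{F}$ is centred.

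\emph{Case 1: $\mcal{F}$ is not centred.} Fix $\xi=1/6$ in Theorem~\ref{thm:stability}. For $n$ sufficiently large and $k\le n^{0.25}$ this gives $|\mcal{F}|\le \tfrac{5}{6}M$. Since $k\le n^{0.25}<n$, both Stirling numbers $\st(n-1,k)$ and $\st(n-1,k-1)$ are positive once $k\ge 2$; for $k=1$ one of them is $0$, but the other is positive. Hence $M>0$ and $|\mcal{F}|<M$. So the inequality holds strictly, and a non-centred family can never attain equality.

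\emph{Case 2: $\mcal{F}$ is centred.} Then $\mcal{F}\subseteq\sym(n,k)[(i,j)]$ for some $i,j\in[n]$. By Lemma~\ref{lem:size},
\[
|\mcal{F}|\le|\sym(n,k)[(i,j)]|=\st(n-1,k-\delta_{i,j})\le M .
\]
Equality forces both $\mcal{F}=\sym(n,k)[(i,j)]$ and $\st(n-1,k-\delta_{i,j})=M$.

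For the converse, note that every star is intersecting, since any two of its members agree at $i$. Each star satisfying $\st(n-1,k-\delta_{i,j})=M$ therefore attains the bound. This gives the stated characterization.

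The only point needing care is the positivity of $M$ in Case~1, which guarantees the strict inequality. This follows because a permutation of $[n-1]$ with exactly $k$ cycles exists for every $1\le k\le n-1$. The rest is immediate once Theorem~\ref{thm:stability} is available; all the real difficulty lies in proving that theorem.
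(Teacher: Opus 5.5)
Your proposal is correct and follows the paper's argument: the paper also deduces the corollary immediately from Theorem~\ref{thm:stability}, since non-centred families are strictly smaller than the largest star, combined with the star sizes $\st(n-1,k-\delta_{i,j})$ from Lemma~\ref{lem:size}. Your explicit checks (positivity of $M$, including the case $k=1$, and the equality case forcing $\mcal{F}$ to be a full star of maximal size) fill in details the paper leaves implicit.
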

A sharper stability result holds in the regime $k\leq (\ln n)^d$, where $d\geq 1$. 

\begin{thm}\label{thm:stabilityln}
Given $\xi>0$, for $n$ sufficiently large and $k\leq (\ln n)^{d}$, if $\mcal{F}\subset \sym(n,k)$ is a non-centred intersecting family, then 
\[|\mcal{F}| \leq \left(1-1/e +\xi\right)\times \max\left\{ \st(n-1,k),\ \st(n-1,k-1) \right\}.\]
\end{thm}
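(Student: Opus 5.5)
We plan a direct counting argument on the structure of a non-centred family, using the fact that when $k\le(\ln n)^d$ a random permutation in $\sym(n,k)$ behaves, on any bounded set of coordinates, very much like a uniform random permutation. Let $\mcal{F}\subset\sym(n,k)$ be non-centred and intersecting, and write $M=\max\{\st(n-1,k),\st(n-1,k-1)\}$. Fix $\sigma\in\mcal{F}$. Every $\tau\in\mcal{F}$ agrees with $\sigma$ somewhere, so $\mcal{F}\subset\bigcup_{i}\sym(n,k)[(i,\sigma(i))]$. This alone gives only $nM$, so the first step is to locate a dominant star. Set $a_{ij}=|\mcal{F}\cap\sym(n,k)[(i,j)]|$; we will show that if no $a_{ij}$ exceeds $(1-1/e+\xi)M$, then the total is also small. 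If some $a_{ij}$ is large, we work with that pair.

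\textbf{Reduction to a single large star.} Suppose $\mcal{F}$ has a large intersection with the star at $(i,j)$, and let $\mcal{F}_0=\mcal{F}\cap\sym(n,k)[(i,j)]$. Since $\mcal{F}$ is non-centred, there is some $\pi\in\mcal{F}$ with $\pi(i)\ne j$. Every element of $\mcal{F}_0$ must agree with $\pi$ at some point $l\ne i$, and $\pi^{-1}(j)\neq i$. This is the usual Hilton--Milner-type mechanism.

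\textbf{Counting estimate.} We must bound the number of $\rho\in\sym(n,k)$ with $\rho(i)=j$ that agree with $\pi$ at some $l\ne i$. Equivalently, we bound the fraction of such $\rho$ that avoid all the $n-1$ constraints $\rho(l)=\pi(l)$ (minus the forced inconsistencies). We will use inclusion--exclusion (Bonferroni truncated at a fixed depth $t$) together with estimates for $|\{\rho\in\sym(n,k)\colon \rho(i_s)=j_s,\ s\le r\}|$. These estimates presumably come from Lemma~\ref{lem:size}-type counting: fixing $r$ values gives roughly $\st(n-r,k-c)$, where $c$ counts the closed cycles. For $k\le(\ln n)^d$ we have $\st(n-r,k)/\st(n,k)\approx n^{-r}$, up to a factor $1+o(1)$, uniformly in bounded $r$, and the terms where constraints close cycles are negligible. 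The $r$-th Bonferroni term is then $\binom{n-1}{r}n^{-r}(1+o(1))\to 1/r!$, so the proportion of the star avoiding $\pi$ tends to $e^{-1}$. Hence $|\mcal{F}_0|\le(1-1/e+o(1))M$.

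\textbf{Remaining parts and main obstacle.} It remains to show that $\mcal{F}\setminus\mcal{F}_0$ is $o(M)$, or else that no star dominates. For this we use the stability framework that presumably underlies Theorem~\ref{thm:stability}. If $\mcal{F}$ has no star intersection of size more than $(1-1/e)M$, we apply the cruder bound, which is presumably the $2/3$ argument. We must then argue that a non-centred family with $|\mcal{F}|\ge(1-1/e+\xi)M$ has a star of density $1-o(1)$, reusing the machinery of Theorem~\ref{thm:stability}, in particular its derangement or eigenvalue-type estimates. The pieces $\mcal{F}\cap\sym(n,k)[(i,j')]$ with $j'\ne j$ must each intersect all of $\mcal{F}_0$, which forces them to be tiny: each is at most $O(M/n)$ times a bounded number, and there are few of them.

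The main obstacle is the uniform ratio estimate $\st(n-r,k-c)/\st(n,k)$ in the polylog regime. We would attack it via the generating function $\st(n,k)\approx\frac{(n-1)!}{(k-1)!}(\ln n)^{k-1}$-type asymptotics, which are valid for $k=o(\ln n)$ but need care for $k$ up to $(\ln n)^d$, possibly requiring the Moser--Wyman or Hwang saddle-point asymptotics. Controlling the terms where the constraints close cycles is the other delicate point.
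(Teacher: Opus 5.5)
\textbf{Verdict: genuine gap.} Your Hilton--Milner counting step is sound, but the structural reduction from an arbitrary non-centred family to that case, which is the heart of the proof, is missing.

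\textbf{What works.} The counting step is essentially the paper's Lemma~\ref{lem:HMubln}. It uses Bonferroni on $\sym(n,k)[(i,j);\pi]$ and the count $|\sym(n,k)[X]|=\st(n-|X|,k-\cy(X))$. It also uses the fact that index sets closing a cycle form only an $O(kz/n)$ fraction (Lemma~\ref{lem:overestimate}), while inflating the count by at most $(2(\ln n)^{d-1})^{z}$ (Lemma~\ref{lem:keyineqln}). A fixed truncation depth is fine for fixed $\xi$. You do not need Moser--Wyman or Hwang asymptotics: the paper gets all ratio bounds from Pitman's bounds \eqref{eq:pitman}, Lemma~\ref{lem:thetabound} and log-concavity. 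So the ratio estimate is not the main obstacle.

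\textbf{The missing step.} You defer it to ``the stability framework that presumably underlies Theorem~\ref{thm:stability}''. What you need is: every non-centred intersecting $\mcal{F}\subset\sym(n,k)$ either has $|\mcal{F}|\le cM$ for some constant $c<1-1/e$, or has all but $o(M)$ of its members in a single star. Your sketch gives neither branch.
\begin{itemize}
\item \emph{No dominant star.} If every star meets $\mcal{F}$ in at most $(1-1/e+\xi)M$ elements, this alone only gives $|\mcal{F}|\le n(1-1/e+\xi)M$. The $2/3$ bound is not a tool for this case: in the paper it is again a Hilton--Milner estimate (Lemma~\ref{lem:HMub}) for a family already inside one star. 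Even if it applied, $2/3>1-1/e+\xi$ for small $\xi$.
\item \emph{Dominant star.} Your heuristic fails quantitatively. $\mcal{F}\setminus\mcal{F}_0$ splits into pieces $\mcal{F}\cap\sym(n,k)[(i,j')]$ indexed by $n-1$ values of $j'$. A per-piece bound of $O(M/n)$ gives only $O(M)$, not $o(M)$, and you give no reason why few pieces are non-empty or why each is that small.
\end{itemize}
There are no derangement or eigenvalue estimates in the paper to reuse. Since $\sym(n,k)$ is not a group, there is no ready-made spectral argument here either.

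\textbf{How the paper fills the gap.} It uses the spread approximation method.
\begin{itemize}
\item Corollary~\ref{cor:spread} shows $\sym(n,k)$ is $\frac{3(n-1)\ln n}{8en^{0.25}}$-spread and weakly $(w,1)$-spread.
\item Theorem~\ref{thm:spreadapproximation} then gives an intersecting family $\mcal{S}$ of partial permutations of size at most $4\log_2 n$, and a remainder $\mcal{G}'$ with $\mcal{G}\setminus\mcal{G}'\subseteq\sym(n,k)[\mcal{S}]$ and $|\mcal{G}'|\le M\ln n/n^{3}$ (Lemma~\ref{lem:spreadapproxsymnk}).
\item Take $\mcal{G}$ to be a maximum non-centred family. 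If $\mcal{S}$ were non-centred, Kupavskii's peeling theorem would give $|\sym(n,k)[\mcal{S}]|\le M/2$ (Lemma~\ref{lem:peelingsymnk}), so $|\mcal{G}|\le 0.55M$. This contradicts the non-centred family of size at least $(1-1/e-0.05)M$ from Lemma~\ref{lem:HMlowerbound}.
\item Hence $\mcal{S}$ is centred at some $(r,s)$, so $\mcal{G}\subseteq\sym(n,k)[(r,s)]\cup\mcal{G}'$ with $\mcal{G}'$ negligible, and some $\pi\in\mcal{G}'$ has $\pi(r)\neq s$.
\end{itemize}
Only at that point does your counting bound finish the proof. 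Without this step, or a substitute delivering both branches, your argument bounds Hilton--Milner families but not arbitrary non-centred intersecting families.
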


Our stability results are weaker than the result of Ellis \cite{Ellisstability} for $\sym(n)$, as we do not determine the exact size or structure of maximum-sized non-centred intersecting families in
$\sym(n,k)$. That being said, in \S~\ref{sec:der}, we produce a non-centred intersecting family whose size is $(1-1/e+o(1))$ times the size of the maximum-sized star, thereby demonstrating that the bound in Theorem~\ref{thm:stabilityln} is asymptotically sharp. 

A related problem was considered by Ku and Wong \cite{kuwong}, who endowed $\sym(n,k)$ with a different notion of intersection. In their model, permutations intersect if and only if they share a common cycle. With respect to this more restrictive notion of intersection, they showed that the size of an intersecting family is at most $\st(n-1,k-1)$, with equality if and only if the family is a star of the form $\sym(n,k)[(i,i)]$ for $i \in [n]$. It is well known that when $k <\ln n$, we have $\st(n-1,k-1)< \st(n-1,k)$. Consequently, in the regime $k<\ln n$, our broader notion of intersection (requiring only an agreement on some point) leads to larger intersecting families.

The remainder of this paper is organised as follows. In \S\ref{sec:stirling}, we recall some classical properties of Stirling numbers and establish auxiliary results. In \S\ref{sec:der}, we construct natural examples of non-centred intersecting families in $\sym(n,k)$ and estimate their sizes; these examples are analogues of the Hilton--Milner construction for $k$-sets.

Our proof of Theorem~\ref{thm:stability} uses the spread approximation method of Kupavskii and Zakharov \cite{kupavskii2024spread}. The method applies to quasirandom families and shows that every sufficiently large intersecting family contains a large centred subfamily. In \S\ref{sec:spread}, we introduce spreadness, the measure of quasirandomness underlying this method, and determine the spreadness of $\sym(n,k)$. We then apply the spread approximation method in \S\ref{sec:spreadappx} to obtain the bounds on non-centred intersecting families needed for our stability results. In \S\ref{sec:proof}, we prove Theorems~\ref{thm:stability} and \ref{thm:stabilityln}. In \S~\ref{sec:conclusion}, we conclude with some problems for future research.

\section{Stirling numbers and their ratios}\label{sec:stirling}
Given positive integers $n$ and $k$ with $k\leq n$, the (unsigned) Stirling number $\st(n,k)$ is defined as the number of permutations in $\sym(n)$ with exactly $k$ cycles in its cycle decomposition. An equivalent definition is that $\st(n,k)$ is the coefficient of $z^{k}$ in the polynomial $P_{n}(z):=\prod\limits_{i=1}^{n}(z+i-1)$. The equivalence of these definitions follows from the observation that both satisfy the well-known (see, e.g., \cite[Lemma 1.3.6]{stanley2011enumerative}) recursion:
\begin{equation}\label{eq:rec}
\st(n,k) = (n-1)\st(n-1,k) + \st(n-1,k-1),     
\end{equation}
for $k>0$, with $\st(0,0)=1$ and $\st(n,0)=\st(0,n)=0$ for $n>0$.

In this section, we investigate ratios of the form $\st(m,l)/\st(m-x,l-y)$, where $m,l,x,y$ are positive integers. These are useful in determining the quasirandomness of $\sym(n,k)$ (cf. \S~\ref{sec:spread}).

Given positive integers $m$ and $x$ with $x\leq m$, we set $m\ff{x}:=\dfrac{m!}{(m-x)!}$ to be the falling factorial. Using \eqref{eq:rec}, we have $(m-1)\st(m-1,l)<\st(m,l)$ for all positive integers $m,l$ with $l\leq m-1$. Applying this inequality recursively yields
\begin{equation}\label{eq:changenumineq}
\st(m-x,l)\times (m-x-1)\ff{x} \leq \st(m,l),
\end{equation} 
for all non-negative integers $m,x,l$ with $x< m$ and $l\leq m$. Thus, it remains to investigate ratios of the form $\st(m,l)/\st(m,l-y)$. 

Given integers $m,l$ with $m\geq l>1$, we set $R(m,l):=\st(m,l)/\st(m,l-1)$. 
To estimate $R(m,l)$, we appeal to the property of Stirling numbers forming a {\it P\'{o}lya frequency sequence}. A sequence $(a_{0},a_{1},\ldots,a_{n})$ of non-negative real numbers is called a {\it P\'{o}lya frequency sequence} (PF) if the associated Toeplitz matrix $M=(a_{i-j})_{i,j=0}^{n}$ (with $a_{k}=0$ for $k<0$) is totally non-negative, that is, every minor of $M$ has a non-negative determinant. It is well known that a sequence $(a_{0},a_{1},\ldots,a_{n}) \in \bb{R}_{\geq 0}^{n}$ is PF if and only if the polynomial $A(z):=\sum\limits_{k=0}^{n}a_{k}z^{k}$ is either constant or has all real zeros. Therefore, the coefficients $\left(\st(n,0), \st(n,1), \ldots, \st(n,n) \right)$ of $P_{n}(z)=\prod\limits_{i=1}^{n}(z+i-1)$ form a PF sequence. For an exposition on PF sequences, we refer the reader to \cite{Unimodsurvey} and the references therein.

Given integers $n,k$ with $n\geq k>1$, we set $R(n,k):=\st(n,k)/\st(n,k-1)$. By considering the $2\times 2$ minors of the Toeplitz matrix associated with the PF sequence $\left(\st(n,0), \st(n,1), \ldots, \st(n,n) \right)$, we observe $\left(\st(n,k)\right)^{2}-\st(n,k+1)\st(n,k-1)>0$, for all $1<k<n$. Thus, 
\begin{equation}\label{eq:log-concave}
 R(n,k) > R(n,k+1)\quad \text{for all integers $2\leq k<n$.}  
\end{equation}
The above relationship is often termed the {\it log-concavity} of Stirling numbers.

Pitman~\cite{Pitman} found bounds on ratios of consecutive terms of a PF sequence, which we use to estimate the ratio $R(n,k)$.

Given integers $n,k$ with $2\leq k<n$, let $\theta_{n,k}$ denote the unique positive root of the function $f(z)=\dfrac{zP_{n}'(z)}{P_{n}(z)}-k$, where $P_{n}(z)=\prod\limits_{i=1}^{n}(z+i-1)$. (Since $f(z)$ is increasing on $(0, \infty)$, this root is unique.) We note that $\theta_{n,k}$ satisfies
\begin{equation}\label{eq:saddle}
 1+\sum\limits_{i=1}^{n-1} \dfrac{\theta_{n,k}}{\theta_{n,k}+i} =k.   
\end{equation}
Pitman~\cite[Eq. $(34)$]{Pitman} showed 
\begin{subequations}\label{eq:pitman}
\begin{align}
    \dfrac{1}{\theta_{n,k}} &< R(n,k) < \dfrac{1}{\theta_{n,k-1}}\quad \text{for $2< k <n$}\\
    \dfrac{1}{\theta_{n,2}}&<R(n,2).
 \end{align}
\end{subequations}
We now find an upper bound on $R(n,2)$. Using \eqref{eq:rec} and the fact that $\st(n,1)=(n-1)!$, it follows that $R(n,2)=R(n-1,2)+\dfrac{1}{n-1}$. Since $R(2,2)=1$, we deduce that 
\[R(n,2)=\sum\limits_{i=1}^{n-1} \frac{1}{i} < 1+\int\limits_{1}^{n-1} \frac{dx}{x}=1+\ln(n-1).\] Thus, 
\begin{align}\label{eq:rn2}
\dfrac{1}{\theta_{n,2}}&<R(n,2)<1+\ln(n-1).
\end{align}

We now estimate $\theta_{n,k}$.
\begin{lem}\label{lem:thetabound}
Let $n,k$ be positive integers with $k>1$ and $n/k>e^{e}$. Then \[ \dfrac{k-1}{\ln(n/k)+2e\ln(\ln(n/k))}<\theta_{n,k}< \dfrac{k}{\ln(n/k)}.\]    
\end{lem}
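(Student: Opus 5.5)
The plan is to use monotonicity. Write $L:=\ln(n/k)$, so $L>e$ by hypothesis, and set $g(z):=\sum_{i=1}^{n-1}\frac{z}{z+i}$. Each summand is strictly increasing in $z>0$, and by \eqref{eq:saddle} $\theta_{n,k}$ is the unique positive solution of $g(z)=k-1$. So it suffices to exhibit $\theta_+:=k/L$ with $g(\theta_+)>k-1$ and $\theta_-:=(k-1)/(L+2e\ln L)$ with $g(\theta_-)<k-1$. Both estimates come from comparing the sum $g$ with integrals, since $x\mapsto z/(z+x)$ is decreasing in $x$.

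\emph{Upper bound.} For $z>0$ we have $g(z)\ge z\int_1^{n}\frac{dx}{z+x}=z\ln\frac{n+z}{1+z}>z\ln\frac{n}{1+z}$. With $z=\theta_+=k/L$ and $L>e$, $k\ge 2$, we get $1+\theta_+<1+k/e\le k$; the last inequality holds because $1+k/e\le k$ is equivalent to $k\ge e/(e-1)\approx 1.58$. Hence $\ln\frac{n}{1+\theta_+}>\ln(n/k)=L$, and so $g(\theta_+)>\theta_+L=k>k-1$. Monotonicity then gives $\theta_{n,k}<k/L$.

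\emph{Lower bound.} Here $g(z)\le z\int_0^{n-1}\frac{dx}{z+x}=z\ln\frac{z+n-1}{z}<z\ln\!\left(1+\frac{n}{z}\right)\le z\ln\frac{n}{z}+\frac{z^{2}}{n}$. For $z=\theta_-$, we have $z\le k\le n$, and it suffices to show
\[
\ln\frac{n}{\theta_-}+\frac{\theta_-}{n}< L+2e\ln L,
\]
because then $g(\theta_-)<\theta_-(L+2e\ln L)=k-1$. We expand $\ln(n/\theta_-)=L+\ln\frac{k}{k-1}+\ln(L+2e\ln L)$, and use $\ln\frac{k}{k-1}\le\ln 2$ and $\theta_-/n\le 1/e^{e}$. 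The elementary bound $\ln L\le L/e$ gives $L+2e\ln L\le 3L$, so $\ln(L+2e\ln L)\le\ln 3+\ln L$. The required inequality therefore reduces to
\[
\ln 6+e^{-e}< (2e-1)\ln L,
\]
and this holds since $\ln L>1$ and $2e-1\approx 4.44>\ln 6+e^{-e}\approx 1.86$. Thus $\theta_->\theta_{n,k}$ is impossible, which gives the lower bound.

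The only delicate point is the bookkeeping in the lower bound. One must absorb the $\ln k/(k-1)$ term, the $\ln(L+2e\ln L)$ term and the $+1$ inside the logarithm into the slack $2e\ln L$. The hypothesis $n/k>e^{e}$, i.e.\ $\ln L>1$, is exactly what makes that slack large enough. Everything else is routine integral comparison.
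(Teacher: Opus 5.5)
Your proof is correct and follows essentially the paper's route: both use strict monotonicity of $\sum_{i=1}^{n-1} z/(z+i)$, the same test points $k/L$ and $(k-1)/(L+2e\ln L)$, and the same sum--integral comparisons, and your lower-bound bookkeeping ($\ln\frac{k}{k-1}\le\ln 2$, $L+2e\ln L\le 3L$, $\theta_-/n<e^{-e}$, $\ln L>1$) is exactly the paper's. The only cosmetic difference is in the upper bound: you keep $\int_1^n$ and use $1+k/L<k$, whereas the paper passes to $\int_0^n$ minus $1$ and uses the slack $\ln(n/b)=L+\ln L$ with $\ln L>0$.
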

\begin{proof}
Consider the function $f(z)=\sum\limits_{i=1}^{n-1} \dfrac{z}{z+i}-k+1$. For a fixed $0<z<n$, $g_{z}(y)=z/(z+y)$ is a decreasing function on $[0,\infty)$ and thus, 
\begin{align*}
\int_{i}^{i+1}g_{z}(y) dy &< g_{z}(i)<\int_{i-1}^{i}g_{z}(y)dy \quad \text{for all $1\leq i\leq n-1$ and}\\
\int_{0}^{1}g_{z}(y)dy &< g_{z}(0)=1.
\end{align*}
Using the above, we have
\begin{align}\label{eq:rsum} 
\int\limits_{0}^{n} \dfrac{z}{z+y} dy -k  <f(z) &< 1+\int\limits_{0}^{n-1} \dfrac{z}{z+y} dy -k \notag \\
z\ln\left(1 + \dfrac{n}{z}\right) -k < f(z) &<  z\ln\left(1 + \dfrac{n}{z}\right) - (k-1).
\end{align}
Now using the fact that $\ln t<\ln(1+t)<\ln t +1/t$ for all $t>1$, we have 
\begin{equation} \label{eq:riemannsums0}
 k\left[ \dfrac{z}{k}\ln(n/z)- 1\right] <f(z) < z \left[\ln(n/z) + \dfrac{z}{n} -\dfrac{k-1}{z}   \right],
\end{equation}
for all $0<z<n$.

Setting $L:=\ln(n/k)$, the above can be rewritten as
\begin{equation} \label{eq:riemannsums}
 k\left[ \dfrac{z}{k}(L+\ln(k/z))- 1\right] <f(z) < z \left[L+\ln(k/z) + \dfrac{z}{n} -\dfrac{k-1}{z}   \right],   
\end{equation}
for all $0<z<n$.

We obtain the upper bound by evaluating $f$ at $b:=\dfrac{k}{L}$, and the lower bound by evaluating $f$ at $a:=\dfrac{k-1}{L+2e\ln L}$. 

Using \eqref{eq:riemannsums}, we have 
\begin{align*}
 f(b) &> k \left[ \dfrac{L+\ln L}{L}- 1\right]=k \dfrac{\ln L}{L}>0
\end{align*}
with the last inequality following from the fact that \[\ln L=\ln(\ln(n/k))>\ln(\ln(e^{e}))=1.\]

We now consider $f(a)$.
Applying \eqref{eq:riemannsums} and using $(k-1)/n<k/n<1/e^{e}$, we have
\begin{align*}
\frac{f(a)}{a} &< L+ \ln(k/(k-1)) +\ln(L+2e\ln L) +\frac{k-1}{n (L+2e\ln L)} -L-2e\ln L \\
&< \ln(k/(k-1))+\ln(L(1+2e\ln L/L))-2e\ln L + \frac{1}{e^{e}(L+2e\ln L)}\\
&< \ln(2) + \ln(1+2e\ln L/L) -(2e-1)\ln L + \frac{1}{e^{e}(L+2e\ln L)},
\end{align*}
with the last inequality following from $k/(k-1) \leq 2$.

As $n/k>e^{e}$, we have $L=\ln(n/k)>e$, $\ln L>1$. Since $\ln(x)/x$ is decreasing on $(e,\infty)$, we also have $\ln L/L < 1/e$. Therefore, we have
\begin{align*}
\dfrac{f(a)}{a}&<\ln(2)+\ln(3)-(2e-1)+\dfrac{1}{3e^{e+1}}<0.
\end{align*}

Now, since $f$ is increasing, $f(a)< 0=f(\theta_{n,k})<f(b)$ implies $a< \theta_{n,k}<b$. 
\end{proof}

In the case $k=\Theta(n)$, we obtain different bounds. Before stating these, we note that $x\ln\left(1+\dfrac{1}{x}\right)$ is increasing on $(0,\infty)$ and maps $(0,\infty)$ onto $(0,1)$. Therefore, given $\alpha \in (0,1)$, there exist $l_{\alpha}>0$ and $u_{\alpha}>0$ such that
\[l_{\alpha}\ln\left(1+\dfrac{1}{l_{\alpha}}\right)<\alpha < u_{\alpha}\ln\left(1+\dfrac{1}{u_{\alpha}}\right).\]

\begin{lem}\label{lem:thetaboundlin}
Given $\alpha,\beta \in (0,1)$ with $\alpha\leq\beta$, let $l_{\alpha}$ and $u_{\beta}$ be such that
\[l_{\alpha}\ln\left(1+\dfrac{1}{l_{\alpha}}\right)<\alpha\leq \beta < u_{\beta}\ln\left(1+\dfrac{1}{u_{\beta}}\right).\] Then, for sufficiently large $n$ and $\beta n\geq k \geq \alpha n$, we have
\[l_{\alpha}n\leq \theta_{n,k} \leq u_{\beta}n.\] 
\end{lem}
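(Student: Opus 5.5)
We argue exactly as in the proof of Lemma~\ref{lem:thetabound}, comparing the function $f(z)=\sum_{i=1}^{n-1}\frac{z}{z+i}-k+1$ with its Riemann-integral approximation. Since $f$ is increasing on $(0,\infty)$ and $f(\theta_{n,k})=0$, it suffices to show $f(l_{\alpha}n)<0<f(u_{\beta}n)$ for all $k$ with $\alpha n\leq k\leq\beta n$, once $n$ is large. The sandwich \eqref{eq:rsum} gives, for every $0<z<n$,
\[
z\ln\left(1+\frac{n}{z}\right)-k<f(z)<z\ln\left(1+\frac{n}{z}\right)-(k-1).
\]
These integral bounds hold for any $z>0$, so we may also use them at $z=u_{\beta}n$ even if $u_{\beta}\geq 1$.

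Substitute $z=cn$ and put $h(c):=c\ln(1+1/c)$, which is increasing on $(0,\infty)$ with range $(0,1)$. Then $z\ln(1+n/z)=n\,h(c)$, and the sandwich becomes
\[
n\,h(c)-k<f(cn)<n\,h(c)-k+1.
\]

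For the upper bound on $\theta_{n,k}$, take $c=u_{\beta}$. Using $k\leq\beta n$,
\[
f(u_{\beta}n)>n\bigl(h(u_{\beta})-\beta\bigr)>0,
\]
because $h(u_{\beta})>\beta$ by hypothesis. For the lower bound, take $c=l_{\alpha}$. Using $k\geq\alpha n$,
\[
f(l_{\alpha}n)<n\bigl(h(l_{\alpha})-\alpha\bigr)+1.
\]
This is negative as soon as $n>1/\bigl(\alpha-h(l_{\alpha})\bigr)$, since $\alpha-h(l_{\alpha})>0$. Monotonicity of $f$ then gives $l_{\alpha}n<\theta_{n,k}<u_{\beta}n$.

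There is no real obstacle. The only points to check are that the integral comparison behind \eqref{eq:rsum} uses nothing beyond $z>0$, and that the additive error of $1$ is absorbed by the linear-in-$n$ gap $n(\alpha-h(l_{\alpha}))$. That absorption is why $n$ must be sufficiently large, with the threshold depending only on $\alpha$ and $l_{\alpha}$.
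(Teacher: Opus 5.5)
Your proposal is correct and follows essentially the same argument as the paper. Both proofs evaluate $f$ at $u_{\beta}n$ and $l_{\alpha}n$ via the sandwich \eqref{eq:rsum}, and absorb the additive $1$ into the linear gap $n(\alpha-h(l_{\alpha}))$ for large $n$. Your remark that \eqref{eq:rsum} holds for every $z>0$ is a small point the paper glosses over: the paper derives it only for $0<z<n$, yet applies it at $u_{\beta}n$, and $u_{\beta}\geq 1$ is forced when $\beta\geq\ln 2$.
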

\begin{proof}
Let $f(z)=\sum\limits_{i=1}^{n-1} \dfrac{z}{z+i}-k+1$. The function $f(z)$ is increasing and $\theta_{n,k}$ is its unique root. It suffices to show $f(l_{\alpha}n)<0<f(u_{\beta}n)$.

Using \eqref{eq:rsum}, $k\leq \beta n$ and the assumption on $u_{\beta}$, we have
\begin{align*}
f(u_{\beta}n)&> n\left( u_{\beta}\ln\left(1 + \frac{1}{u_{\beta}}\right) -\dfrac{k}{n}\right) \\
&>  n\left( u_{\beta}\ln\left(1 + \frac{1}{u_{\beta}}\right) -\beta \right) >0.
\end{align*}

Using \eqref{eq:rsum} and $k\geq \alpha n$, we have
\[
f(l_{\alpha}n)< n\left( l_{\alpha}\ln\left(1 + \frac{1}{l_{\alpha}}\right) -\dfrac{k}{n}+\dfrac{1}{n}\right)
\leq n\left( l_{\alpha}\ln\left(1 + \frac{1}{l_{\alpha}}\right) -\alpha+\dfrac{1}{n}\right).
\]
Since $l_{\alpha}\ln\left(1 + \frac{1}{l_{\alpha}}\right) -\alpha<0$ and $\dfrac{1}{n}=o(1)$, for sufficiently large $n$,
\[\left( l_{\alpha}\ln\left(1 + \frac{1}{l_{\alpha}}\right) -\alpha+\dfrac{1}{n}\right)<0,\] and thus $f(l_{\alpha} n)<0$.
This concludes the proof.
\end{proof}

Next, we use Lemma~\ref{lem:thetabound} to obtain bounds on $R(n,k)$ in the regime $k\leq n^{0.25}$. We start with an elementary inequality.

\begin{lem}\label{lem:n-lnncomp}
If $c>0$, $a >1/c$ and $n\geq e^{ac^{2}}$, then 
\[\dfrac{n}{(\ln n)^{c}}> e^{(a-\sqrt{a})c^{2}}\]
\end{lem}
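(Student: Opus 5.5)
Write $n=e^{t}$ with $t\geq ac^{2}$, and pass to logarithms. The claim is equivalent to
\[
t - c\ln t > (a-\sqrt{a})c^{2}.
\]
So I will study the function $h(t)=t-c\ln t$ on the interval $[ac^{2},\infty)$. Its derivative is $h'(t)=1-c/t$, which is positive for $t>c$. Since $a>1/c$, we have $ac^{2}>c$, so $h$ is increasing on $[ac^{2},\infty)$. It therefore suffices to verify the inequality at the left endpoint $t=ac^{2}$, where it reads
\[
ac^{2}-c\ln(ac^{2})>(a-\sqrt{a})c^{2},
\qquad\text{that is,}\qquad
\sqrt{a}\,c>\ln(ac^{2}).
\]

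To prove this, set $s:=\sqrt{a}\,c$, so that $ac^{2}=s^{2}$ and the target becomes $s>2\ln s$. I will use the elementary fact that $2\ln s<s$ for every $s>0$. For $s\leq 1$ this holds because the left side is non-positive. For $s>1$, the function $s-2\ln s$ has its minimum at $s=2$, where it equals $2-2\ln 2>0$. This gives the strict inequality.

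Combining the two steps, $h(t)\geq h(ac^{2})>(a-\sqrt{a})c^{2}$ for all $t\geq ac^{2}$. Exponentiating returns the stated inequality
\[
\frac{n}{(\ln n)^{c}}>e^{(a-\sqrt{a})c^{2}}.
\]

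The only subtle point is the monotonicity step. This is exactly where the hypothesis $a>1/c$ enters: it places the starting point $ac^{2}$ to the right of the critical point $t=c$ of $h$. Beyond that, the argument reduces to the one-variable inequality $s>2\ln s$, which I do not expect to cause any difficulty.
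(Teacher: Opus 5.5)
Your proof is correct and follows the paper's argument. The paper shows that $x/(\ln x)^{c}$ is increasing on $[e^{c},\infty)$, evaluates it at $e^{ac^{2}}$, and then applies $e^{x}>x^{2}$ with $x=\sqrt{a}\,c$; your monotonicity of $h(t)=t-c\ln t$ for $t>c$ together with $s>2\ln s$ is the same argument in logarithmic coordinates.
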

\begin{proof}
Using the first derivative test, we observe that $f(x)=x/(\ln x)^{c}$ is increasing on $[e^{c},\ \infty)$. Since $n>e^{ac^{2}}>e^{c}$, we have
\[\dfrac{n}{(\ln n)^{c}}=f(n) \geq f(e^{ac^{2}})=\dfrac{e^{ac^{2}}}{((\sqrt{a}c)^{2})^{c}}.\]
Since $e^{x}> x^{2}$, for all $x>0$, we have $(\sqrt{a}c)^{2} <e^{\sqrt{a}c}$, and thus
\[\dfrac{n}{(\ln n)^{c}} \geq \dfrac{e^{ac^{2}}}{((\sqrt{a}c)^{2})^{c}} > \dfrac{e^{ac^{2}}}{(e^{\sqrt{a}c})^{c}}=e^{(a-\sqrt{a})c^{2}}.\]
\end{proof}
First, we obtain a lower bound on $R(n,k)$ in the regime $k\leq n^{\alpha}$.
\begin{cor}\label{cor:RLbound}
Given $\alpha \in (0,1)$, if $n$ and $k$ are positive integers larger than $1$ such that $n>(2 e^e)^{1/(1-\alpha)}$ and $k\leq n^{\alpha}$,
then  \[R(n-w,k)> \dfrac{(1-\alpha)\ln n}{2n^{\alpha}},\] for all integers $w$ with $0\leq w \leq 2n^{\alpha}$.
\end{cor}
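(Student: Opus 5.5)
The plan is to combine Pitman's lower bound \eqref{eq:pitman} with the upper bound on $\theta$ from Lemma~\ref{lem:thetabound}, applied with $n$ replaced by $m:=n-w$. Fix $w$ with $0\leq w\leq 2n^{\alpha}$.

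\textbf{Step 1: the hypotheses.} Since $k\leq n^{\alpha}$ and $m\geq n-2n^{\alpha}$, we have
\[
\frac{m}{k}\;\geq\; \frac{n-2n^{\alpha}}{n^{\alpha}} \;=\; n^{1-\alpha}-2 .
\]
The hypothesis $n>(2e^{e})^{1/(1-\alpha)}$ means $n^{1-\alpha}>2e^{e}$. Hence
\[
\frac{m}{k} \;>\; 2e^{e}-2 \;>\; e^{e},
\]
because $e^{e}\approx 15.15>2$. This also gives $2\leq k<m$, so Pitman's bounds apply. For $k>2$ we use the left inequality of (\ref{eq:pitman}a), and for $k=2$ we use (\ref{eq:pitman}b). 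In both cases we get $R(m,k)>1/\theta_{m,k}$.

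\textbf{Step 2: bound $\theta_{m,k}$.} Lemma~\ref{lem:thetabound} applies to the pair $(m,k)$, since $k>1$ and $m/k>e^{e}$. It gives $\theta_{m,k}<k/\ln(m/k)$. Therefore
\[
R(m,k)\;>\;\frac{\ln(m/k)}{k}\;\geq\;\frac{\ln(n^{1-\alpha}-2)}{n^{\alpha}},
\]
using $k\leq n^{\alpha}$ and the bound on $m/k$ from Step 1.

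\textbf{Step 3: compare with $\ln n$ (the only real computation).} It remains to show
\[
\ln(n^{1-\alpha}-2)\;\geq\;\tfrac{1-\alpha}{2}\ln n ,
\]
that is, $n^{1-\alpha}-2\geq n^{(1-\alpha)/2}$. Put $t=n^{(1-\alpha)/2}$. The inequality becomes $t^{2}-t-2\geq 0$, i.e. $(t-2)(t+1)\geq 0$, which holds as soon as $t\geq 2$. This is equivalent to $n^{1-\alpha}\geq 4$, which follows from $n^{1-\alpha}>2e^{e}$.

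Combining the three steps gives
\[
R(n-w,k)\;>\;\frac{(1-\alpha)\ln n}{2n^{\alpha}} .
\]
The main care needed is in checking that $m/k>e^{e}$ holds uniformly in $w$, so that Lemma~\ref{lem:thetabound} is applicable; the rest is routine.
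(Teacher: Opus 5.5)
Your proof is correct and follows the paper's route: you check $(n-w)/k>e^{e}$, combine Pitman's bound $R>1/\theta$ with $\theta_{n-w,k}<k/\ln((n-w)/k)$ from Lemma~\ref{lem:thetabound}, and then compare $\ln(n^{1-\alpha}-2)$ with $\tfrac{1-\alpha}{2}\ln n$. The only difference is cosmetic: the paper goes through $n^{1-\alpha}-2\geq n^{1-\alpha}/2> n^{(1-\alpha)/2}$, whereas you get $n^{1-\alpha}-2\geq n^{(1-\alpha)/2}$ directly from $(t-2)(t+1)\geq 0$.
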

\begin{proof}
Using $k\leq n^{\alpha}$, $w\leq 2n^{\alpha}$ and $\dfrac{n^{(1-\alpha)}}{2}>e^{e}>2$, we have 
\[\dfrac{n-w}{k} \geq \dfrac{(n-w)}{n^{\alpha}}\geq n^{1-\alpha}-2 \geq \dfrac{n^{(1-\alpha)}}{2}>e^{e}.\]
Thus, using \eqref{eq:pitman} and Lemma~\ref{lem:thetabound}, we have  
\begin{align*}
R(n-w,k) & > \dfrac{1}{\theta_{n-w,k}} > \dfrac{\ln((n-w)/k)}{k} >\dfrac{\ln(n^{(1-\alpha)}/2)}{n^{\alpha}}.
\end{align*}
Since $n^{(1-\alpha)/2}>\sqrt{(2e^e)}>2$, it follows that \[\dfrac{n^{(1-\alpha)}}{2}=n^{(1-\alpha)/2} \dfrac{n^{(1-\alpha)/2}}{2} > n^{(1-\alpha)/2}.\] Thus,
 \[R(n-w,k)>\dfrac{\ln(n^{(1-\alpha)}/2)}{n^{\alpha}}>\dfrac{\ln(n^{(1-\alpha)/2})}{n^{\alpha}}=\dfrac{(1-\alpha)\ln n}{2n^{\alpha}}.\]
\end{proof}
A sharper bound holds in the regime $k\leq (\ln n)^{d}$ (here $d\in[1,\infty)$).
\begin{cor}\label{cor:RLboundln}
Given $d\in [1,\infty)$, if $n,k$ are positive integers such that $n\geq e^{16d^{2}}$ and $1<k\leq (\ln n)^{d}$, then  \[R(n-w,k)>  \dfrac{1}{2(\ln n)^{d-1}},\] for all integers $w$ with $0\leq w\leq (\ln n)^{d}$.
\end{cor}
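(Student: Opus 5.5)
The plan mirrors the proof of Corollary~\ref{cor:RLbound}: combine the Pitman lower bound \eqref{eq:pitman} with the upper bound on $\theta$ from Lemma~\ref{lem:thetabound}. Set $m:=n-w$.

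The first step is to check the hypothesis $m/k>e^{e}$ of Lemma~\ref{lem:thetabound}. Since $w\leq (\ln n)^{d}$ and $k\leq(\ln n)^{d}$,
\[\frac{m}{k}\geq \frac{n-(\ln n)^{d}}{(\ln n)^{d}}=\frac{n}{(\ln n)^{d}}-1.\]
To bound $n/(\ln n)^{d}$ from below, apply Lemma~\ref{lem:n-lnncomp} with $c=d$ and $a=16$. The condition $a>1/c$ holds since $d\geq 1$, and $n\geq e^{16d^{2}}$ is exactly the hypothesis. This gives
\[\frac{n}{(\ln n)^{d}}>e^{12d^{2}}\geq e^{12}.\]
Hence $m/k>e^{12}-1>e^{e}$, so Lemma~\ref{lem:thetabound} applies to $\theta_{m,k}$.

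The second step is to invoke \eqref{eq:pitman} and Lemma~\ref{lem:thetabound}. For $k>2$ we also need $k<m$, which holds because $m/k>1$. For $k=2$ we use the separate bound in \eqref{eq:pitman}. In both cases
\[R(m,k)>\frac{1}{\theta_{m,k}}>\frac{\ln(m/k)}{k}\geq \frac{\ln(m/k)}{(\ln n)^{d}}.\]

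It remains to show $\ln(m/k)\geq \tfrac12\ln n$. This is the only step needing care, though it is routine. Write $m/k\geq n/(\ln n)^{d}-1$. Using $n/(\ln n)^{d}>e^{12d^{2}}$, the $-1$ costs only a factor close to $1$, so $m/k\geq \tfrac12\, n/(\ln n)^{d}$. It then suffices to show $n/(\ln n)^{d}\geq 2\sqrt n$, i.e. $\sqrt n\geq 2(\ln n)^{d}$. For this, apply the same monotonicity argument as in Lemma~\ref{lem:n-lnncomp} to $x\mapsto \sqrt x/(\ln x)^{d}$, which is increasing for $x\geq e^{2d}$. At $x=e^{16d^{2}}$ it equals $e^{8d^{2}}/(16d^{2})^{d}\geq e^{8d^{2}}/e^{4d^{2}}$, using $16d^{2}\leq e^{4d}$, and this exceeds $2$.

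Therefore $m/k\geq \sqrt n$, so $\ln(m/k)\geq \tfrac12\ln n$, and
\[R(n-w,k)>\frac{\tfrac12\ln n}{(\ln n)^{d}}=\frac{1}{2(\ln n)^{d-1}},\]
as claimed.
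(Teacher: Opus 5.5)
Your proof is correct and is essentially the paper's argument. You combine Pitman's lower bound \eqref{eq:pitman} with the upper bound $\theta_{m,k}<k/\ln(m/k)$ from Lemma~\ref{lem:thetabound}, and you get $(n-w)/k\geq n/(\ln n)^{d}-1\geq\sqrt{n}$ from the monotonicity of $\sqrt{x}/(\ln x)^{d}$ evaluated at $x=e^{16d^{2}}$. The paper gets $\sqrt{n}/(\ln n)^{d}>e^{4d^{2}}$ in one step from Lemma~\ref{lem:n-lnncomp} with $c=2d$, $a=4$, which makes your separate $e^{e}$ check (via $c=d$, $a=16$) redundant; your explicit handling of $k=2$ and of $k<m$ is care that the paper omits.
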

\begin{proof}
Using Lemma~\ref{lem:n-lnncomp}, for any $d\geq 1$ and $x\geq e^{4(2d)^{2}}$, we have $\dfrac{\sqrt{x}}{(\ln x)^{d}}=\sqrt{\dfrac{x}{(\ln x)^{2d}}}>e^{4d^{2}}$, and therefore 
\[\dfrac{x}{(\ln x)^{d}} > e^{4d^{2}}\sqrt{x}>e^{e}.\]

Thus, for all $n\geq e^{16d^{2}}$, we have
\begin{equation}\label{eq:elemineqs1}
\dfrac{n}{(\ln n)^{d}}> e^{4d^{2}}\sqrt{n}. \\
\end{equation}
Using $w,k\leq (\ln n)^{d}$, we have 
\begin{align*}
\dfrac{n-w}{k} \geq \dfrac{n}{(\ln n)^{d}}-1.
\end{align*}

Using \eqref{eq:elemineqs1}, we have 
\[\dfrac{n}{(\ln n)^{d}}-1 \geq e^{4d^{2}}\sqrt{n}-1  > \sqrt{n}.\] Thus, we have 

\begin{equation}\label{eq:elemineqs2}
\dfrac{n-w}{k}> \sqrt{n} >e^{e},
\end{equation}
for all $0\leq w\leq (\ln n)^{d}$.
Using \eqref{eq:pitman} and Lemma~\ref{lem:thetabound}, we have 
\begin{align*}
R(n-w,k) &> \dfrac{\ln((n-w)/k)}{k}.
\end{align*} Now, applying \eqref{eq:elemineqs2}, we have
\[R(n-w,k) > \dfrac{\ln((n-w)/k)}{k} > \dfrac{\ln \sqrt{n}}{(\ln n)^d} = \dfrac{1}{2(\ln n)^{d-1}}.\]
\end{proof}

We are now equipped to prove some key inequalities involving the ratios of Stirling numbers.
\begin{lem}\label{lem:keyineq}
If $\beta \in (0,1)$, $m,l,x$, $y$, $w$ are non-negative integers such that $m>(2e)^{e/(1-\beta)}$, $0<l\leq m^{\beta}$, $0\leq y\leq l$, $0\leq w\leq m^{\beta}$, and $0\leq x< m-w$ then 
\[ \st(m-w,l) \geq  (m-w-1)\ff{x} \left(\dfrac{(1-\beta)\ln m}{2m^{\beta}} \right)^{y} \st(m-w-x,l-y). \]
\end{lem}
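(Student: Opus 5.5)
The plan is to split the ratio into two factors, one changing only the number of cycles and one changing only the number of points:
\[
\frac{\st(m-w,l)}{\st(m-w-x,l-y)} = \frac{\st(m-w,l)}{\st(m-w,l-y)}\cdot\frac{\st(m-w,l-y)}{\st(m-w-x,l-y)}.
\]
The second factor is handled directly by \eqref{eq:changenumineq}. Applying it with $m-w$ in place of $m$ and $l-y$ in place of $l$ gives
\[
\st(m-w,l-y)\geq (m-w-1)\ff{x}\,\st(m-w-x,l-y).
\]
This uses $x<m-w$ and $l-y\leq m-w$. The latter holds because $l\leq m^{\beta}$ and $w\leq m^{\beta}$, while $m^{1-\beta}>2$.

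For the first factor, write it as a telescoping product:
\[
\frac{\st(m-w,l)}{\st(m-w,l-y)}=\prod_{j=0}^{y-1}R(m-w,l-j).
\]
The indices $l-j$ run from $l$ down to $l-y+1$. By log-concavity \eqref{eq:log-concave}, $R(m-w,\cdot)$ is decreasing, so every factor is at least $R(m-w,l)$. Corollary~\ref{cor:RLbound}, applied with $\alpha=\beta$, $n=m$ and $k=l$, then bounds each factor below by $\frac{(1-\beta)\ln m}{2m^{\beta}}$. Its hypotheses hold: $w\leq m^{\beta}\leq 2m^{\beta}$, and $(2e)^{e/(1-\beta)}\geq(2e^{e})^{1/(1-\beta)}$ because $(2e)^{e}=2^{e}e^{e}>2e^{e}$.

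Multiplying the two bounds gives the claimed inequality.

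The main obstacle is the edge cases, which need separate treatment.
\begin{itemize}
\item If $y=0$, the cycle factor is empty and only \eqref{eq:changenumineq} is needed.
\item If $l=1$ (or more generally $l-j=1$ would occur), Corollary~\ref{cor:RLbound} does not apply, since it requires $k>1$. I would still use monotonicity to reduce every factor to $R(m-w,l)$, which needs $l\geq 2$. For $l=1$, $y=1$, I would bound directly by $\st(m-w,1)/\st(m-w,0)$. If $x=0$, the right side vanishes, because $\st(m-w,0)=0$ for $m-w>0$, so the inequality is trivial. The case $l-y=0$ with $x=m-w$ is excluded by $x<m-w$.
\item If $y=l\geq 2$, the factor $R(m-w,1)$ is not defined. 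Again $\st(m-w-x,0)=0$ since $m-w-x\geq 1$, so the right side is $0$ and the inequality is trivial.
\end{itemize}
So the real case is $1\leq y\leq l-1$. Then all indices $l-j\geq 2$, log-concavity applies for indices in $[2,m-w]$, and Pitman's bound \eqref{eq:pitman} via Corollary~\ref{cor:RLbound} covers $k=2$ as well.
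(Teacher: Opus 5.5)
Your proof is correct and is essentially the paper's argument. You peel off the change in the number of points with \eqref{eq:changenumineq} and telescope the change in the number of cycles into $\prod_{j=0}^{y-1}R(m-w,l-j)$. You bound each factor below via log-concavity and Corollary~\ref{cor:RLbound}, and you dispose of $y=l$ by noting that $\st(m-w-x,0)=0$ because $m-w-x\geq 1$. That last observation holds for every admissible $x$, not only $x=0$ as your second bullet suggests.

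The only cosmetic difference is the comparison point. You compare every factor with $R(m-w,l)$ and invoke the corollary with $k=l$. The paper compares with $R(m-w,\lfloor m^{\beta}\rfloor)$. Both choices satisfy the corollary's hypotheses.
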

\begin{proof}
First, we deal with some ``edge'' cases. Using $\st(n,0)=0$ and $\st(n,1)=n!$, we see that the result is true when $l=1$. In the case $y=l$, the result follows using $\st(m-w,0)=0<\st(m-w,l)$. We now assume $l>1$ and $l>y$.

Set $l_{\max}:=\lfloor m^{\beta} \rfloor$. By \eqref{eq:log-concave}, it follows that $R(m-w,l_{\max})\leq R(m-w,l)$ for all $1<l\leq l_{\max}$. Therefore, we have
\begin{align*}
   \dfrac{\st(m-w,l)}{\st(m-w,l-y)} &=  \prod_{j=0}^{y-1} \dfrac{\st(m-w,l-j)}{\st(m-w,l-j-1)}=  \prod_{j=0}^{y-1} R(m-w,l-j)\\& \geq  R(m-w,l_{\max})^{y}.
\end{align*}
The above along with \eqref{eq:changenumineq} yield
\begin{equation}\label{eq:genratiolbound}
\dfrac{\st(m-w,l)}{\st(m-w-x,l-y)} \geq (m-w-1)\ff{x} R(m-w,l_{\max})^{y}
\end{equation}
Corollary~\ref{cor:RLbound} implies
\[ R(m-w,l_{\max})^{y} >\left(\dfrac{(1-\beta)\ln m}{2 m^{\beta}}\right)^{y}.\] The result now follows by applying the above bound in \eqref{eq:genratiolbound}.
\end{proof}
A sharper result holds in the regime $k\leq (\ln n)^{d}$.
\begin{lem}\label{lem:keyineqln}
If $d \in [1,\infty)$, $m,l,x$, $y$, $w$ are non-negative integers such that $m>e^{16d^{2}}$, $0<l\leq (\ln m)^{d}$, $0\leq y\leq l$, $0\leq w\leq (\ln m)^{d}$, and $0\leq x< m-w$ then 
\[ \st(m-w,l) \geq  (m-w-1)\ff{x} \left(\dfrac{1}{2(\ln m)^{d-1}} \right)^{y} \st(m-w-x,l-y). \]
\end{lem}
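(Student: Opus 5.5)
The plan is to mirror the proof of Lemma~\ref{lem:keyineq} exactly, replacing Corollary~\ref{cor:RLbound} by Corollary~\ref{cor:RLboundln}.

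First dispose of the edge cases. When $y=l$ we have $\st(m-w-x,0)=0$, so the right-hand side vanishes and the inequality is trivial. When $l=1$ this forces $y\in\{0,1\}$; the case $y=1$ is the previous one, and the case $y=0$ is just \eqref{eq:changenumineq}. Henceforth assume $l>1$ and $y<l$.

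Next, set $l_{\max}:=\lfloor(\ln m)^{d}\rfloor$. Write the ratio as a telescoping product,
\[\frac{\st(m-w,l)}{\st(m-w,l-y)}=\prod_{j=0}^{y-1}R(m-w,l-j).\]
Every index $l-j$ here satisfies $2\le l-j\le l\le l_{\max}$. Log-concavity \eqref{eq:log-concave} gives $R(m-w,l-j)\ge R(m-w,l_{\max})$, so the product is at least $R(m-w,l_{\max})^{y}$. Applying \eqref{eq:changenumineq} with $m-w$ in place of $m$ and $l-y$ in place of $l$ (valid since $x<m-w$) gives
\[\st(m-w,l-y)\ge (m-w-1)\ff{x}\,\st(m-w-x,l-y).\]
Combining the two gives the analogue of \eqref{eq:genratiolbound}.

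Finally, invoke Corollary~\ref{cor:RLboundln} with $n=m$ and $k=l_{\max}$. Its hypotheses hold: $m\ge e^{16d^{2}}$, $0\le w\le(\ln m)^{d}$, and $1<l\le l_{\max}\le(\ln m)^{d}$ (so $l_{\max}>1$). It yields $R(m-w,l_{\max})>\frac{1}{2(\ln m)^{d-1}}$. Raising to the power $y$ and substituting finishes the proof.

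The only delicate point is bookkeeping rather than a real obstacle. One must check that Corollary~\ref{cor:RLboundln} is applied with $k=l_{\max}$ rather than $l$, and that $l_{\max}\ge 2$, which follows from $l>1$. One must also check that the log-concavity comparison only uses indices in the range $2\le k<m-w$. If $l_{\max}\ge m-w$ this would fail, but $m-w>\sqrt{m}(\ln m)^{d}$ by \eqref{eq:elemineqs2}, so it cannot happen.
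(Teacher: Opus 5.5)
Your proposal is correct and is essentially the paper's own proof. The paper likewise sets $l_{\max}=\lfloor(\ln m)^{d}\rfloor$, reruns the argument behind \eqref{eq:genratiolbound} (telescoping product, log-concavity, then \eqref{eq:changenumineq}), and concludes with Corollary~\ref{cor:RLboundln}. Your explicit checks that $l_{\max}\ge 2$ and $l_{\max}<m-w$ fill in bookkeeping the paper leaves implicit. One small note: the factor $(m-w-1)\ff{x}$ you use is what iterating $\st(m,l)\ge(m-1)\st(m-1,l)$ actually gives, whereas \eqref{eq:changenumineq} as printed has the weaker $(m-x-1)\ff{x}$ in place of $(m-1)\ff{x}$, so your citation relies on the intended (true) form of that inequality.
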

\begin{proof}
Set $l_{\max}=\lfloor (\ln m)^{d} \rfloor$. Arguing as we did in the proof of \eqref{eq:genratiolbound}, we arrive at
\[\st(m-w,l) \geq  (m-w-1)\ff{x} \left(R(m-w,l_{\max}) \right)^{y} \st(m-w-x,l-y).\] The result follows by applying Corollary~\ref{cor:RLboundln}.
\end{proof}

We also require an upper bound on the ratios of the form $\st(m-z,l)/\st(m-z-x,l)$. 

\begin{lem}\label{lem:keyineq1}
If $\alpha \in(0, 1)$, $\delta \in (0,1/2)$ and $m,l,w,x$ are  integers such that\\ $m\geq \max\{(2e)^{e/(1-\alpha)}, e^{(1-\delta)/(\delta(1-\alpha)}\}$, $1\leq l \leq m^{\alpha}$, $0\leq x \leq l-1$, and $0\leq w \leq m^{\alpha}$, then 
\[\st(m-w,l) \leq \left(\dfrac{1}{1-\delta} \right)^{x} (m-w-1)\ff{x} \st(m-w-x,l).\]
\end{lem}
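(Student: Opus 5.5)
The plan is to telescope one step at a time using the recursion \eqref{eq:rec}, and to control the error term in each step with the lower bound on $R$ from Corollary~\ref{cor:RLbound}. Set $N:=m-w$. For $0\leq j\leq x-1$, \eqref{eq:rec} gives
\[
\st(N-j,l)=(N-j-1)\,\st(N-j-1,l)\left(1+\frac{1}{(N-j-1)\,R(N-j-1,l)}\right),
\]
using $\st(N-j-1,l-1)/\st(N-j-1,l)=1/R(N-j-1,l)$ when $l\geq 2$. Multiplying these identities over $j=0,\ldots,x-1$ produces $(N-1)\ff{x}\,\st(N-x,l)$ times a product of $x$ correction factors. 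It therefore suffices to show that each correction factor is at most $1/(1-\delta)$. Equivalently, for each $j$ we need
\[
(N-j-1)\,R(N-j-1,l)\geq \frac{1-\delta}{\delta}.
\]

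First I dispose of the edge cases. If $l=1$, then $\st(n,1)=(n-1)!$ and the inequality holds directly, with room to spare. If $x=0$, there is nothing to prove. For $l\geq 2$ we have $x\leq l-1$, so $N-j-1\geq N-x\geq l$ (since $N\gg m^{\alpha}\geq l$). Hence $R(N-j-1,l)$ is defined, and $\st(N-j-1,l)>0$.

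For the main step, write $N-j-1=m-w'$ with $w'=w+j+1\leq m^{\alpha}+l\leq 2m^{\alpha}$. Since $m>(2e^{e})^{1/(1-\alpha)}$ follows from $m\geq (2e)^{e/(1-\alpha)}$, Corollary~\ref{cor:RLbound} applies and gives
\[
R(m-w',l)>\frac{(1-\alpha)\ln m}{2m^{\alpha}}.
\]
Consequently
\[
(N-j-1)\,R(N-j-1,l)>\frac{(m-2m^{\alpha})(1-\alpha)\ln m}{2m^{\alpha}}\geq (1-\alpha)\ln m,
\]
where the last inequality uses $m-2m^{\alpha}\geq 2m^{\alpha}$, i.e. $m^{1-\alpha}\geq 4$, which holds under the hypothesis on $m$. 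The hypothesis $m\geq e^{(1-\delta)/(\delta(1-\alpha))}$ is exactly $(1-\alpha)\ln m\geq (1-\delta)/\delta$. This gives the required bound, so each correction factor satisfies $1+\frac{1}{(N-j-1)R}\leq 1+\frac{\delta}{1-\delta}=\frac{1}{1-\delta}$. Taking the product over $j$ finishes the proof.

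The main obstacle is bookkeeping rather than ideas. One has to check that the shifted index $w'$ stays within the range $w'\leq 2m^{\alpha}$ allowed by Corollary~\ref{cor:RLbound}; this is where $x\leq l-1\leq m^{\alpha}$ is used. One also has to confirm that the crude inequality $m-2m^{\alpha}\geq 2m^{\alpha}$ follows from the stated lower bound on $m$. If the constants turned out to be slightly too tight, I would instead use the sharper bound $R>\ln((N-j-1)/l)/l$ from \eqref{eq:pitman} and Lemma~\ref{lem:thetabound} directly.
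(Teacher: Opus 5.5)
Your proposal is correct and follows essentially the same route as the paper. Both arguments telescope the recursion \eqref{eq:rec} one step at a time and bound $1/R(m-w-q-1,l)$ via Corollary~\ref{cor:RLbound}, with total shift at most $2m^{\alpha}$. Both then absorb that term using $m-w-q-1\geq 2m^{\alpha}$ and finish with $(1-\alpha)\ln m\geq (1-\delta)/\delta$, which gives a per-step factor of at most $1/(1-\delta)$.
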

\begin{proof}
When $l=1$, we have $x=0$ and thus the result holds. In general, the result holds in the case $x=0$. 
We now assume $l>1$ and $x>0$.

For all $0\leq q<x$, using \eqref{eq:rec}, we have 
\begin{align*}
\st(m-w-q,l) &= (m-w-q-1) \st(m-w-q-1,l) + \st(m-w-q-1,l-1)\\
&= \st(m-w-q-1,l) \left(m-w-q-1+ \dfrac{1}{R(m-w-q-1,l)} \right).
\end{align*}
Since $w+q+1\leq w+x \leq 2m^{\alpha}$, using Corollary~\ref{cor:RLbound}, we have 
\begin{align*}
\st(m-w-q,l)&=\st(m-w-q-1,l) \left(m-w-q-1+ \dfrac{1}{R(m-w-q-1,l)} \right) \\
&\leq \st(m-w-q-1,l) \left(m-w-q-1+ \dfrac{2m^{\alpha}}{(1-\alpha)\ln m} \right)
\end{align*}
Using $w\leq m^{\alpha}$ and $q+1\leq x\leq m^{\alpha}$, we have 
\[m-w-q-1-2m^{\alpha} \geq m-4m^{\alpha}=m(1-4/m^{(1-\alpha)})>m(1-4/2e^{e})>0.\]
Therefore, 
\begin{align*}
\st(m-w-q,l) &\leq \st(m-w-q-1,l)(m-w-q-1)\left(1+ \dfrac{1}{\ln (m^{1-\alpha})}\right).
\end{align*}
Since $m > e^{(1-\delta)/(\delta(1-\alpha))}$, it follows that 
\[
\st(m-w-q,l) \leq \dfrac{1}{1-\delta} \st(m-w-q-1,l)(m-w-q-1), 
\]
for all $0\leq q<x$. Repeated applications of the above yields
\[
\st(m-w,l) \leq \left(\frac{1}{1-\delta}\right)^{x}(m-w-1)\ff{x} \st(m-w-x,l).
\]
\end{proof}

We end this section with another upper bound on certain ratios of Stirling numbers.

\begin{lem}\label{lem:keyineq2}
If $\alpha \in(0, 1)$, $\delta \in (0,1/2)$ and $m,l$ are positive integers such that $m\geq \max\{(2e)^{e/(1-\alpha)}, e^{(1-\delta)/(\delta(1-\alpha))}\}$ and $2\leq l\leq m^{\alpha}$, then 
\[\st(m,l) \leq \dfrac{(m-1)\ln m}{((1-\delta)^{2}} \times \max\{\st(m-1,l-1),\ \st(m-1,l)\}.
\]
\end{lem}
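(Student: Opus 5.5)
Start from the recursion \eqref{eq:rec}:
\[\st(m,l)=(m-1)\st(m-1,l)+\st(m-1,l-1)\leq m\cdot\max\{\st(m-1,l-1),\ \st(m-1,l)\}.\]
It then suffices to check that $m\leq \frac{(m-1)\ln m}{(1-\delta)^{2}}$. We have $\frac{m}{m-1}\leq 2$, and the hypothesis on $m$ gives $\ln m\geq \frac{1-\delta}{\delta(1-\alpha)}>\frac{1}{\delta}>2$ because $\delta<1/2$. Also $(1-\delta)^{2}\leq 1$. Together these give
\[\frac{(m-1)\ln m}{(1-\delta)^{2}}\geq (m-1)\ln m>2(m-1)\geq m,\]
which proves the lemma.

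The bound with $\ln m$ is therefore very loose. A sharper route, which I expect is the intended one, splits into two cases. If $\st(m-1,l)\geq \st(m-1,l-1)$, the trivial bound $\st(m,l)\leq m\,\st(m-1,l)$ already suffices. Otherwise $R(m-1,l)<1$, and we write
\[\st(m,l)=\st(m-1,l-1)\bigl((m-1)R(m-1,l)+1\bigr).\]
The remaining task is to bound $R(m-1,l)$. Pitman's bound \eqref{eq:pitman} gives $R<1/\theta_{m-1,l-1}$, and Lemma~\ref{lem:thetabound} gives $1/\theta_{m-1,l-1}\lesssim \ln m/(l-2)$. In the edge case $l=2$ we use \eqref{eq:rn2} instead, $R(m-1,2)<1+\ln(m-2)$.

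The only genuine obstacle is handling these $l=2$ and $l=3$ edge cases, where Lemma~\ref{lem:thetabound} does not apply directly. The elementary argument above avoids them entirely, so I would present it as the proof and keep the case analysis only as a remark on where the factor $\ln m$ comes from.
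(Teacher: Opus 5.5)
Your argument is correct and differs from the paper's, but one step is false as written. The inequality $\frac{1-\delta}{\delta(1-\alpha)}>\frac{1}{\delta}$ is equivalent to $\alpha>\delta$, which is not assumed. For example, $\alpha=0.1$, $\delta=0.4$ gives $5/3$ on the left and $5/2$ on the right. The second hypothesis on $m$ alone gives only $\ln m>\frac{1-\delta}{\delta}>1$. That is not enough for $(m-1)\ln m\geq m$, which fails at $m=3$. The repair is to use the other hypothesis: $m\geq (2e)^{e/(1-\alpha)}>(2e)^{e}$, so $\ln m>e(1+\ln 2)>4$. In fact $(m-1)\ln m\geq m$ holds for every $m\geq 4$. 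Combined with $(1-\delta)^2\leq 1$ and $\st(m,l)\leq m\max\{\st(m-1,l-1),\st(m-1,l)\}$ from \eqref{eq:rec}, this proves the lemma.

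The paper argues differently.
\begin{itemize}
\item It applies Lemma~\ref{lem:keyineq1} with $x=1$, $w=0$ to get $\st(m,l)\leq\frac{m-1}{1-\delta}\st(m-1,l)$.
\item It writes $\frac{\st(m,l)}{\st(m-1,l-1)}=\frac{\st(m,l)}{\st(m-1,l)}R(m-1,l)$.
\item It bounds $R(m-1,l)\leq R(m-1,2)<1+\ln m\leq\frac{\ln m}{1-\delta}$, using log-concavity \eqref{eq:log-concave}, \eqref{eq:rn2}, and $m\geq e^{(1-\delta)/\delta}$.
\end{itemize}
This bounds $\st(m,l)$ against each of $\st(m-1,l)$ and $\st(m-1,l-1)$ separately. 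So it actually proves the stronger inequality with $\min$ in place of $\max$. That is where the factor $\ln m$ is genuinely needed: for $l=2$, $R(m-1,2)=\sum_{i=1}^{m-2}1/i\approx\ln m$.

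For the statement as written, with $\max$, your recursion argument is strictly simpler. It uses none of the hypotheses on $\alpha$, $\delta$, $l$ beyond $m\geq 4$. It also covers $l=1$. In the only application, \eqref{eq:equvi} in the proof of Lemma~\ref{lem:spreadapproxsymnk}, only $\st(n,k)\leq 2n\ln n\max\{\st(n-1,k),\st(n-1,k-1)\}$ is needed. Your bound would make the separate $k=1$ check there unnecessary.

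Your remark about the ``intended'' route does not match the paper either. There are no $l=2,3$ edge cases in the paper, because log-concavity reduces every $l\geq 2$ to $R(m-1,2)$. Also, in your case $R(m-1,l)<1$ nothing remains to be bounded, since $(m-1)R(m-1,l)+1<m$ immediately.
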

\begin{proof}
The inequality $\dfrac{\st(m,l)}{\st(m-1,l)} \leq \dfrac{(m-1)}{(1-\delta)}$ is a special case of Lemma~\ref{lem:keyineq1}. 

We have 
\[\dfrac{\st(m,l)}{\st(m-1,l-1)}= \dfrac{\st(m,l)}{\st(m-1,l)} R(m-1,l) \leq \dfrac{(m-1)}{(1-\delta)} R(m-1,l).\]
To complete the proof, it suffices to show $R(m-1,l) \leq \dfrac{\ln m}{(1-\delta)}$. 
Since $l\geq 2$, by the log-concavity \eqref{eq:log-concave}, we have 
\begin{align*}
R(m-1,l)&\leq R(m-1,2)\\
R(m-1,2) &< 1+\ln(m) \quad (\text{using \eqref{eq:rn2}}) \\
&< \ln(m^{\delta/(1-\delta)})+\ln(m)\quad (\text{since $m>e^{(1-\delta)/\delta}$})\\
&\leq \dfrac{\ln m}{(1-\delta)}.
\end{align*}
This concludes the proof.
\end{proof}  

\section{Sizes of canonical non-centred intersecting families}\label{sec:der}
Consider a family $\mcal{A}\subset 2^{X}$. Recall that $\mcal{F}\subseteq \mcal{A}$ is {\it intersecting} if $R\cap S \neq \emptyset$ for all $R,S\in \mcal{F}$. Given $x \in X$, the intersecting family $\mcal{A}[x]=\{A \in \mcal{A}\colon\ x \in A\}$ is the {\it star centred at $x$}. 
An  intersecting family $\mcal{F}$ is said to be {\it centred} if $\mcal{F}\subseteq \mcal{A}[x]$ for some $x \in X$.

Throughout the paper, a permutation $\sigma\in \sym(n)$ is identified with the $n$-subset $\{(i,\sigma(i))\colon \ i \in [n] \}$ of $[n]\times [n]$. We therefore regard $\sym(n)$, and hence $\sym(n,k)$, as subfamilies of $\binom{[n]\times [n]}{n}$ with standard set-theoretic relations and operations. Specifically, given $\sigma,\tau\in\sym(n)$, $i,j \in [n]$, and $V\subset [n]\times [n]$, we have
\begin{itemize} 
 \item $(i,j) \in \sigma$ if and only if $j=\sigma(i)$;
 \item $V\subseteq \sigma$ if and only if $\sigma(i)=j$ for all $(i,j) \in V$; and
 \item $\sigma \cap \tau =\{(i,\sigma(i))\colon\ i\in[n] \ \text{and}\ \sigma(i)=\tau(i)\}$.
\end{itemize}

A {\it partial permutation} on $[n]$ is any non-empty subset of a permutation in $\sym(n)$. A {\it proper} partial permutation on $[n]$ is a partial permutation which is not a permutation on $[n]$. Given a partial permutation $X\subset [n]\times [n]$, the family $\sym(n,k)[X]$ is defined to be the family of permutations $\sigma \in \sym(n,k)$ such that $X\subseteq \sigma$. Given $i,j\in [n]$, the family $\sym(n,k)[(i,j)]:=\sym(n,k)[\{(i,j)\}]$ is the star centred at $(i,j)$.    

We next derive a formula for $|\sym(n,k)[X]|$, by viewing partial permutations as digraphs.
Given a partial permutation $X$, let $\mcal{L}_{X}$ be the digraph on $[n]$ with $X$ as its set of arcs. Since $X$ is a partial permutation, the in-degree and out-degree of every vertex in $\mcal{L}_{X}$ are at most $1$. Consequently, the connected components of $\mcal{L}_{X}$ are either
directed cycles or directed paths (including isolated vertices). For a full permutation $\sigma\in \sym(n)$, $\mcal{L}_{\sigma}$ is a disjoint union of cycles, corresponding to the cycle decomposition of $\sigma$. Digraphs where every vertex has in-degree and out-degree at most $1$ are termed {\it Laguerre} digraphs, a name derived from their connections to Laguerre polynomials (see \cite{MR782311,MR4392335} and the references therein). Given $k\leq n$, a {\it k-saturated} Laguerre digraph on $[n]$ is a disjoint union of $k$ directed cycles. The correspondence $X\mapsto\mcal{L}_{X}$ provides a bijection between $\sym(n,k)$ and the set of $k$-saturated Laguerre digraphs on $[n]$.

We write $\cy(X)$ (respectively $\mathbf{p}(X)$) for the number of cycles (respectively paths) in $\mcal{L}_{X}$. We now determine $\mathbf{p}(X)$ by reproducing an argument provided in \cite{MR782311}. 

\begin{lem}\label{lem:pathnumber}
If $X$ is a partial permutation on $[n]$, then the Laguerre digraph $\mcal{L}_{X}$ has exactly $n-|X|$ path components.
\end{lem}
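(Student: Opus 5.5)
The plan is to count components of $\mcal{L}_{X}$ by looking at the sources of paths. Each vertex of $\mcal{L}_{X}$ has in-degree at most $1$, and every vertex has in-degree $1$ except those that are not the head of any arc in $X$. Because $X$ is a partial permutation, the heads $\{j\colon (i,j)\in X\}$ are pairwise distinct. There are therefore exactly $|X|$ vertices of in-degree $1$, and exactly $n-|X|$ vertices of in-degree $0$.

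Next I would set up a bijection between path components and vertices of in-degree $0$. As noted before the statement, each component of $\mcal{L}_{X}$ is either a directed cycle or a directed path, where an isolated vertex counts as a path of length zero.

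\begin{itemize}
\item In a directed cycle every vertex has in-degree exactly $1$, so cycles contain no vertex of in-degree $0$.
\item A directed path $v_{0}\to v_{1}\to\cdots\to v_{r}$ has exactly one vertex of in-degree $0$, namely its initial vertex $v_{0}$; every $v_{t}$ with $t\geq 1$ has in-degree $1$. Within the whole digraph the in-degree of $v_{0}$ is also $0$, since all arcs at $v_{0}$ lie in its own component.
\end{itemize}

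Sending each path component to its initial vertex is thus a bijection onto the set of in-degree-$0$ vertices, which gives $\mathbf{p}(X)=n-|X|$.

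An alternative is edge counting. A path on $s$ vertices has $s-1$ arcs and a cycle on $s$ vertices has $s$ arcs, so $|X|=n-\mathbf{p}(X)$. I would mention this as a check.

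The only delicate point is treating isolated vertices as paths, and checking that the in-degree count is global rather than per-component. Neither is a real obstacle.
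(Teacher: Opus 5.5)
Your proof is correct and is essentially the paper's argument mirrored: the paper matches path components with their terminal vertices (out-degree $0$) and uses that the arcs of $X$ have distinct tails, while you match them with their initial vertices (in-degree $0$) and use that the arcs have distinct heads; both facts follow immediately from $X$ being contained in a permutation. Your edge-counting check (a path on $s$ vertices has $s-1$ arcs and a cycle on $s$ vertices has $s$ arcs, so $|X|=n-\mathbf{p}(X)$) is also valid and gives an independent confirmation.
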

\begin{proof}
We recall that each vertex of $\mcal{L}_{X}$ has an out-degree of either $0$ or $1$. Each path component of $\mcal{L}_{X}$ has a unique vertex of out-degree $0$, namely its terminal vertex. Moreover, every vertex of out-degree $0$ must be the terminal vertex of a path component. Thus, $\mathbf{p}(X)$, the number of path components of $\mcal{L}_{X}$, counts the number of vertices of out-degree $0$ in $\mcal{L}_{X}$.

We observe that $X_{1}:=\{x\in [n]\colon\ (x,y)\in X\ \text{for some $y\in [n]$}\}$ is the set of vertices of $\mcal{L}_{X}$ of out-degree $1$. Thus, $\mathbf{p}(X)=n-|X_{1}|$. Since $X$ is a partial permutation, its arcs have distinct initial vertices, we have $|X_{1}|=|X|$ and therefore $\mathbf{p}(X)=n-|X_{1}|=n-|X|$. 
\end{proof}

We have $\sym(n,k)[X]\neq \emptyset$ if and only if $X$ is either a proper partial permutation with $\cy(X)\leq k$ or $X$ is a permutation in $\sym(n,k)$; in the latter case we have $\sym(n,k)[X]=\{X\}$. We now determine $|\sym(n,k)[X]|$. We recall that $\st(n,k)$ is the number of permutations in $\sym(n)$ with exactly $k$ cycles.

\begin{lem}\label{lem:size}
Let $X\in \binom{[n]\times [n]}{\leq n}$ be a proper partial permutation with $\cy(X)\leq k$. We have
 \[|\sym(n,k)[X]| = \st(n-|X|,k-\cy(X)).\]   
\end{lem}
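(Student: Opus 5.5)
The plan is to build an explicit bijection between $\sym(n,k)[X]$ and $\sym(n-|X|,k-\cy(X))$ by contracting the path components of $\mcal{L}_{X}$. Write $p:=\mathbf{p}(X)$. By Lemma~\ref{lem:pathnumber}, $p=n-|X|$. Since $X$ is proper, $|X|<n$, so $p\geq 1$.

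First, list the path components as $P_{1},\ldots,P_{p}$. Each $P_{t}$ has an initial vertex $s_{t}$ of in-degree $0$ in $\mcal{L}_{X}$ and a terminal vertex $e_{t}$ of out-degree $0$. An isolated vertex has $s_{t}=e_{t}$. Now take $\sigma\in\sym(n,k)$ with $X\subseteq\sigma$. Then $\mcal{L}_{\sigma}$ is obtained from $\mcal{L}_{X}$ by adding exactly $n-|X|=p$ arcs. These are precisely the arcs $(e_{t},\sigma(e_{t}))$, because every other vertex already has its out-arc in $X$. Each added arc must end at a vertex of in-degree $0$ in $\mcal{L}_{X}$, so $\sigma(e_{t})=s_{\pi(t)}$ for a unique map $\pi$ on $[p]$. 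Since $\sigma$ is injective and there are exactly $p$ in-degree-$0$ vertices, $\pi\in\sym(p)$.

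Conversely, every $\pi\in\sym(p)$ gives such a $\sigma_{\pi}\supseteq X$ by setting $\sigma_{\pi}(e_{t})=s_{\pi(t)}$. So $\pi\mapsto\sigma_{\pi}$ is a bijection from $\sym(p)$ onto the set of all permutations of $[n]$ containing $X$.

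The main step is counting cycles. The cycles of $\mcal{L}_{\sigma_{\pi}}$ come in two kinds. The first kind are the $\cy(X)$ cycles already present in $\mcal{L}_{X}$, which are untouched. The second kind are cycles formed by concatenating paths: following $\sigma_{\pi}$ from $s_{t}$ traverses $P_{t}$, then jumps to $P_{\pi(t)}$, and so on. Hence these cycles correspond bijectively to the cycles of $\pi$ on $[p]$. Therefore $\cy(\sigma_{\pi})=\cy(X)+(\text{number of cycles of }\pi)$, and $\sigma_{\pi}\in\sym(n,k)$ if and only if $\pi$ has exactly $k-\cy(X)$ cycles. Counting gives $|\sym(n,k)[X]|=\st(p,k-\cy(X))=\st(n-|X|,k-\cy(X))$.

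The only delicate point is checking that the contraction map really is a bijection and respects cycle structure. This should be a routine verification once the in-degree and out-degree bookkeeping is written out carefully. The edge case $k=\cy(X)$ is consistent with the formula, since $\st(p,0)=0$ for $p\geq1$.
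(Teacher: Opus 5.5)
Your proposal is correct and follows essentially the same route as the paper. Both arguments record each completion $\sigma\supseteq X$ by the permutation $\pi$ of the $n-|X|$ path components of $\mcal{L}_{X}$, defined by the terminal-to-initial arcs $(e_t,s_{\pi(t)})$, check that this is a bijection, and match the cycles of $\mcal{L}_{\sigma}$ with the $\cy(X)$ untouched cycles of $\mcal{L}_{X}$ together with the cycles of $\pi$.
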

\begin{proof}
The map $\sigma \mapsto \mcal{L}_{\sigma}$ is a one-one correspondence between $\sym(n,k)[X]$ and the set of $k$-saturated Laguerre digraphs on $[n]$ which contain $\mcal{L}[X]$ as a subgraph. (Recall that a $k$-saturated Laguerre digraph on $[n]$ is a digraph on $[n]$ which is a disjoint union of $k$ cycles.)

By Lemma~\ref{lem:pathnumber}, $\mcal{L}_{X}$ has exactly $r:=n-|X|$ path components. We assign a fixed but arbitrary ordering (indexed by $[r]$) to these path components. Given $i \in \left[r \right]$, let $u_{i}$ and $v_{i}$ be the initial and the terminal vertices, respectively, of the $i$th path component of $\mcal{L}_{X}$.

Let $\mcal{L}$ be a $k$-saturated Laguerre subgraph on $[n]$ with $\mcal{L}_{X}$ as its subgraph. Since $\mcal{L}$ is a union of cycles, each element has a unique out-neighbour and a unique in-neighbour. The set $\{v_{i} \colon i \in [r]\}$ is the set of vertices with no out-neighbours in $\mcal{L}_{X}$, and $\{u_{i} \colon i \in [r]\}$ is the set of vertices with no in-neighbours in $\mcal{L}_{X}$. Thus given $i \in [r]$, there is a unique $\pi(i) \in [r]$ such that $(v_{i}, u_{\pi(i)})$ is an arc of $\mcal{L}$. Since each $u_{i}$ has a unique in-neighbour, we note that $\pi \in \sym(r)$. Given $i,j\in [r]$, by the definition of $\pi$, the $i$th and $j$th path component lie in the same cycle if and only if $i$ and $j$ lie in the same cycle of $\pi$. The $\cy(X)$ cycles in $\mcal{L}_{X}$ remain untouched in $\mcal{L}$. Since $\mcal{L}$ has exactly $k$ cycles, $\pi$ must have exactly $k-\cy(X)$, and thus $\pi \in \sym(r,k-\cy(X))$.      
 
Conversely, given $\pi \in \sym(r,k-\cy(X))$, every vertex of \[\mcal{G}_{\pi}=\left([n],\ X\cup\{(v_{i},\ u_{\pi(i)})\colon i \in [r]\} \right)\] has $1$ as both its in-degree and out-degree, and thus is a union of disjoint cycles. By construction, the $\cy(X)$ cycles of $\mcal{L}_{X}$ remain intact in $\mcal{G}_{\pi}$; and for $i,j\in [r]$, $v_{i}$ and $u_{j}$ lie in the same cycle of $\mcal{G}_{\pi}$ if and only if $i,j$ are in the same cycle of $\pi$. Since $\pi$ has exactly $k-\cy(X)$ cycles, $\mcal{G}_{\pi}$ has exactly $\cy(X)+k-\cy(X)=k$ cycles, and thus is a $k$-saturated Laguerre digraph containing $\mcal{L}_{X}$.  

We have established a one-one correspondence between $\sym(n,k)[X]$ and\\ $\sym(r,k-\cy(X))$, which proves the result. Since $r=n-|X|$, the result follows.  
\end{proof}

We now describe a Hilton--Milner-type construction of a ``large'' non-centred family in $\sym(n,k)$. 
\begin{defi}\label{def:admi}
Given $r,s \in [n]$ and $\pi\in \sym(n,k)$, the triple $(r,s,\pi)$ is said to be admissible if $\pi(r)\neq s$ and additionally $r\neq s$ whenever $k=1$.
\end{defi}

Given an admissible triple $(r,s,\pi)$, we define 
\begin{equation}\label{eq:HMfamily}
\sym(n,k)[(r,s);\pi]:=\{\tau \in \sym(n,k)\colon\ \tau(r)=s\ \text{and}\ \pi\cap\tau \neq \emptyset\}.
\end{equation}
The family $\sym(n,k)[(r,s);\pi] \cup\{\pi\}$ is a natural example of a non-centred intersecting family in $\sym(n,k)$; we refer to such a family as a {\it Hilton--Milner} family. 
In this section, we estimate the sizes of Hilton--Milner families in the regime $k\leq n^{0.25}$.

\begin{remark}
Note that in the case $k=1$, we have $\sym(n,1)[(r,r)]=\emptyset$ for all $r\in [n]$. We therefore exclude the case $r=s$ when $k=1$ from the definition of admissibility.
\end{remark} 

From Lemma~\ref{lem:size}, we recall that 
\begin{equation}\label{eq:size}
|\sym(n,k)[S]|= \st(n-|S|,k-\cy(S)).
\end{equation}

Consider an admissible triple $(r,s,\pi)$ and a permutation $\tau \in \sym(n,k)[(r,s)]$. We have $\tau(r)=s\neq\pi(r)$, and, since $\pi^{-1}(s)\neq r$ and $\tau$ is a permutation, we also have $\tau(\pi^{-1}(s))\neq s=\pi(\pi^{-1}(s))$. Therefore, given $\tau \in \sym(n,k)[(r,s)]$, we have $\tau \in \sym(n,k)[(r,s);\pi]$, if and only if $\tau(i)=\pi(i)$ for some $i \in V_{r,s}^{\pi}:=[n]\setminus \{r,\pi^{-1}(s)\}$.
We have thus proved \[\sym(n,k)[(r,s);\pi]= \bigcup\limits_{i\in V_{r,s}^{\pi}}\sym(n,k)[\{(r,s),(i,\pi(i))\}].\]  

Given $J \subset [n]$, let 
\begin{equation}\label{eq:deltapij}
\Delta(\pi,J)=\{(x,\pi(x))\colon\ x\in  J\}.
\end{equation}
Applying the inclusion-exclusion principle, we have
\[|\sym(n,k)[(r,s);\pi]|= \sum\limits_{\emptyset\subsetneq J \subseteq V_{r,s}^{\pi}} (-1)^{|J|+1} \left\lvert\sym(n,k)[\Delta(\pi,J) \cup \{(r,s)\}]\right\rvert.\] 

For ease of notation, given $J\subset V_{r,s}^{\pi}$, we set 
\begin{equation}\label{eq:mu}
\mu_{r,s}^{\pi}(J):=\cy(\Delta(\pi,J)\cup\{(r,s)\}).
\end{equation}
 By \eqref{eq:size}, we arrive at
\begin{equation}\label{eq:PIE} 
|\sym(n,k)[(r,s);\pi]|= \sum\limits_{\emptyset\subsetneq J \subseteq V_{r,s}^{\pi}} (-1)^{|J|+1} \st(n-|J|-1,k-\mu_{r,s}^{\pi}(J)).
\end{equation}

Given an integer $1\leq z \leq n-2 $, we set
\begin{equation}\label{eq:bonfterm}
\bb{S}_{z}((r,s);\pi) = \sum\limits_{\substack{Z \subseteq V_{r,s}^{\pi}\\  |Z|=z }} \st(n-|Z|-1,k-\mu_{r,s}^{\pi}(Z))
\end{equation}

Thus, we have 
\[|\sym(n,k)[(r,s);\pi]|= \sum\limits_{z=1}^{n-2} (-1)^{z+1}\bb{S}_{z}((r,s);\pi).\]
Application of the standard Bonferroni inequalities yields
\begin{equation}\label{eq:bonf}
\sum\limits_{z=1}^{2m} (-1)^{z+1}\bb{S}_{z}((r,s);\pi) \leq |\sym(n,k)[(r,s);\pi]| \leq \sum\limits_{z=1}^{2m+1} (-1)^{z+1}\bb{S}_{z}((r,s);\pi),
\end{equation}
for all positive integers $m$ with $2m+1 \leq n-2$.

Given $J\subseteq V_{r,s}^{\pi}$, let $\mcal{X}_{J}$ denote the Laguerre digraph \[\mcal{L}_{\Delta(\pi,J) \cup \{(r,s)\}}=([n], \Delta(\pi,J) \cup \{(r,s)\}).\] The number $\mu_{r,s}^{\pi}(J)$ represents the number of cycles in $\mcal{X}_{J}$.
The digraph $\mcal{X}_{J}$ is a spanning subgraph of $\mcal{X}:=\mcal{X}_{V_{r,s}^{\pi}}$. We now determine the structure of $\mcal{X}$. 

We recall that $\mcal{L}_{\pi}=([n],\ \pi)$ is a union of $k$ disjoint cycles consistent with its cycle decomposition; for instance, if $\pi=(1)(2,3,4)(5,6)\in \sym(6)$, then $\mcal{L}_{\pi}$ is the union of the cycles $1\to 1$, $2\to3\to4\to2$, and $5\to6\to5$. We note that $\mcal{X}$ is obtained from $\mcal{L}_{\pi}$ by
deleting the arcs $(r,\pi(r))$ and $(\pi^{-1}(s),s)$, and then adding the arc $(r,s)$. 

Consequently, all cyclic components of $\mcal{L}_{\pi}$ disjoint from $\{r,s\}$ are preserved in $\mcal{X}$. The remaining components of $\mcal{X}$ depend on the relative positions of $r,s$ in the cycle decomposition of $\pi$.

Case 1: If $r,s$ are distinct are in a single cycle $(\pi(r),\ldots,\pi^{-1}(s),s,\ldots \pi^{-1}(r),r)$ of $\pi$, then deleting the arcs $(r,\pi(r))$ and $(\pi^{-1}(s),s)$ splits this cycle into two directed paths. The arc $(r,s)$ closes one of these into the cycle $s\to\pi(s)\ldots\to r\to s$ and the other remains the path $\pi(r)\to\ldots \to \pi^{-1}(s)$. To summarize, in this case $\mcal{X}$ is the disjoint union of a path, a cycle containing $\{r,s\}$, and the cyclic components of $\mcal{L}_{\pi}$ disjoint from $\{r,s\}$.

Case 2: Assume that $r$ and $s$ are in distinct cycles $C_{r}$ and $C_{s}$ of $\pi$. In this case deleting the arcs $(r,\pi(r))$ and $(\pi^{-1}(s),s)$ transforms $C_{r}$ and $C_{s}$ into paths. Adding the arc $(r,s)$ joins these paths into a single path. To summarize, in this case $\mcal{X}$ is the disjoint union of a path and the cyclic components of $\mcal{L}_{\pi}$ disjoint from $\{r,s\}$.

We have proved the following result.
\begin{lem}\label{lem:structureofX}
Given $\pi \in \sym(n,k)$ and an admissible triple $(r,s,\pi)$ (with $r,s\in [n]$), we have the following:
\begin{enumerate}
\item If $r$ and $s$ lie in the same cycle of $\pi$, then $\mcal{X}_{V_{r,s}^{\pi}}$ is the disjoint union of a path, a cycle containing $\{r,s\}$, and the cyclic components of $\mcal{L}_{\pi}$ disjoint from $\{r,s\}$.
\item If $r$ and $s$ lie in disjoint cycles of $\pi$, then $\mcal{X}_{V_{r,s}^{\pi}}$ is the disjoint union of a path and the cyclic components of $\mcal{L}_{\pi}$ disjoint from $\{r,s\}$.
\end{enumerate}
\end{lem}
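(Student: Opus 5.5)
The plan is to read off the structure of $\mcal{X}:=\mcal{X}_{V_{r,s}^{\pi}}$ directly from its arc set and then split into the two cases of the statement. By definition, the arc set of $\mcal{X}$ is $\Delta(\pi,V_{r,s}^{\pi})\cup\{(r,s)\}$. This is exactly the arc set of $\mcal{L}_{\pi}$ with the two arcs $(r,\pi(r))$ and $(\pi^{-1}(s),s)$ removed and the arc $(r,s)$ added. Admissibility guarantees $\pi(r)\neq s$, so $r\neq\pi^{-1}(s)$ and the two removed arcs are distinct. Hence exactly two arcs of $\mcal{L}_{\pi}$ are deleted, and since $(r,s)\notin\pi$ the added arc is genuinely new. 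The set $\Delta(\pi,V_{r,s}^{\pi})\cup\{(r,s)\}$ is also a partial permutation: $r$ and $s$ have out-degree and in-degree $0$ after the deletions, so adding $(r,s)$ keeps all degrees at most $1$. Therefore $\mcal{X}$ is a Laguerre digraph. By Lemma~\ref{lem:pathnumber} it has exactly $n-(n-1)=1$ path component. This serves as a consistency check on both cases.

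Every cycle of $\pi$ avoiding $r,\pi^{-1}(s)$ (equivalently, avoiding $r$ and $s$) loses no arcs and gains none. So all such cyclic components of $\mcal{L}_{\pi}$ survive intact as components of $\mcal{X}$, and it remains only to analyse the cycles containing $r$ or $s$.

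\emph{Case (1): $r,s$ lie on one cycle $C$ of $\pi$.} Write $C$ as $r\to\pi(r)\to\cdots\to\pi^{-1}(s)\to s\to\cdots\to r$. Deleting the two arcs cuts $C$ into the directed path $\pi(r)\to\cdots\to\pi^{-1}(s)$ and the directed path $s\to\pi(s)\to\cdots\to r$. The new arc $(r,s)$ joins the terminal vertex $r$ of the second path to its initial vertex $s$, closing it into a cycle containing $r$ and $s$. The first path is untouched. (When $r=s$ the second path is the single vertex $r$ and we obtain the loop $r\to r$; admissibility excludes the degenerate situation only when $k=1$, which is harmless here.) This gives one path, one cycle through $\{r,s\}$, and the preserved cycles.

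\emph{Case (2): $r\in C_r$ and $s\in C_s$ with $C_r\neq C_s$.} Deleting $(r,\pi(r))$ turns $C_r$ into a path $\pi(r)\to\cdots\to r$, and deleting $(\pi^{-1}(s),s)$ turns $C_s$ into a path $s\to\cdots\to\pi^{-1}(s)$. The arc $(r,s)$ concatenates them into the single path $\pi(r)\to\cdots\to r\to s\to\cdots\to\pi^{-1}(s)$.

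The only point needing care is the bookkeeping of edge cases: $r=s$, and short cycles where $\pi(r)=\pi^{-1}(s)$ or where a path degenerates to a single vertex. The single-path count from Lemma~\ref{lem:pathnumber} confirms that no component is lost or miscounted in these cases. I expect no real obstacle beyond this.
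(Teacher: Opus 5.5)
Your proposal is correct and takes essentially the paper's route: $\mcal{X}_{V_{r,s}^{\pi}}$ is $\mcal{L}_{\pi}$ with the arcs $(r,\pi(r))$ and $(\pi^{-1}(s),s)$ deleted and $(r,s)$ added, and the analysis splits on whether $r$ and $s$ share a cycle. Your extra bookkeeping is a welcome addition. This covers the distinctness of the deleted arcs, the single-path count from Lemma~\ref{lem:pathnumber}, and the explicit $r=s$ loop case, which the paper's case analysis leaves implicit. One small fix: after the deletions, $r$ has out-degree $0$ and $s$ has in-degree $0$, not both degrees at both vertices.
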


Given $J\subseteq V_{r,s}^{\pi}$, we recall that $\mcal{X}_{J}$ is the spanning subgraph of $\mcal{X}_{V_{r,s}^{\pi}}$ with arc set $\Delta(\pi,J) \cup \{(r,s)\}$ (cf. \eqref{eq:deltapij}). We now determine the structure of $\mcal{X}_{J}$ in certain cases.

\begin{cor}\label{cor:xjlongcycle}
Let $n>k\geq1$, $\ell>1$ be integers. Further, let $r,s\in [n]$ and $\pi \in \sym(n,k)$ such that $(r,s,\pi)$ is admissible and the shortest cycle in $\pi$  has length $\ell$. Then the following hold:
\begin{enumerate}
\item If $r\neq s$ and $r,s$ lie in disjoint cycles of $\pi$, then $\mcal{X}_{J}$ has no cycles for all $J\subset V_{r,s}^{\pi}$ with $|J|<\ell$.
\item If $r=s$, then for all $J\subset V_{r,s}^{\pi}$ with $|J|<\ell$, the only cycle in $\mcal{X}_{J}$ is the loop on the vertex $r$.
\end{enumerate}
\end{cor}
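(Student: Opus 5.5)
The plan is to use Lemma~\ref{lem:structureofX} and the fact that $\mcal{X}_{J}$ is a spanning subgraph of $\mcal{X}:=\mcal{X}_{V_{r,s}^{\pi}}$. Any cycle of $\mcal{X}_{J}$ is a directed cycle using only arcs of $\mcal{X}_J$. In a Laguerre digraph the only cycles contained in $\mcal{X}$ are exactly the cyclic components of $\mcal{X}$: a directed cycle in a digraph with in- and out-degree at most one is a whole connected component, and a path component contains no cycle. So every cycle of $\mcal{X}_{J}$ is a cyclic component of $\mcal{X}$, and all of its arcs must lie in $\Delta(\pi,J)\cup\{(r,s)\}$.

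For part (1), Lemma~\ref{lem:structureofX}(2) shows that the cyclic components of $\mcal{X}$ are precisely the cycles of $\mcal{L}_{\pi}$ disjoint from $\{r,s\}$. Each such component is a cycle of $\pi$, so it has length at least $\ell$. Its arcs are of the form $(x,\pi(x))$ with $x\in V_{r,s}^{\pi}$, and they do not include the arc $(r,s)$. For such a cycle to appear in $\mcal{X}_J$, all its $\geq \ell$ arcs must lie in $\Delta(\pi,J)$, which forces $|J|\geq \ell$. Hence if $|J|<\ell$, then $\mcal{X}_J$ has no cycles.

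For part (2), take $r=s$. Admissibility gives $\pi(r)\neq r$, so the cycle $C$ of $\pi$ containing $r$ has length at least $2$. I first check how Lemma~\ref{lem:structureofX}(1) applies in this degenerate case. Deleting $(r,\pi(r))$ and $(\pi^{-1}(r),r)$ from $C$ leaves the path $\pi(r)\to\cdots\to\pi^{-1}(r)$ together with the isolated vertex $r$. Adding $(r,r)$ turns $r$ into a loop. So the cyclic components of $\mcal{X}$ are the loop at $r$ and the cycles of $\pi$ disjoint from $r$. The loop lies in every $\mcal{X}_J$, since $(r,s)=(r,r)$ is always present. Any other cyclic component has length at least $\ell$ and requires $|J|\geq\ell$, exactly as in part (1). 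So for $|J|<\ell$ the loop is the only cycle.

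The main point needing care is this degenerate application of Lemma~\ref{lem:structureofX}(1) when $r=s$, where the "cycle containing $\{r,s\}$" is just the loop. I would verify it directly from the arc-deletion description, as above, rather than rely on the general statement.
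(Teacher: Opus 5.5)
Your proof is correct and follows essentially the same route as the paper. You observe that any cycle of the spanning subgraph $\mcal{X}_J$ must be a cyclic component of $\mcal{X}_{V_{r,s}^{\pi}}$, use Lemma~\ref{lem:structureofX} to identify those components, and note that each one other than the loop at $r$ is a cycle of $\pi$ needing at least $\ell$ arcs from $\Delta(\pi,J)$. Your direct check of the case $r=s$ is a worthwhile addition, because the paper's derivation of Lemma~\ref{lem:structureofX}(1) (Case 1) explicitly assumes $r\neq s$, yet the paper simply invokes that lemma here.
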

\begin{proof}
First, we note that any component of $\mcal{X}_{J}$ which is disjoint from $\{r,s\}$ has at most $|\Delta(\pi, J)|=|J|<\ell$ arcs. By Lemma~\ref{lem:structureofX} and our assumptions on the pair $(r,s)$, every cycle of $\mcal{X}_{V_{r,s}^{\pi}}$ is either a cycle of $\mcal{L}_{\pi}$ disjoint from $\{r,s\}$ or, when $r=s$, the loop at $r$. Since every cycle of $\pi$ has length at least $\ell$, every cycle disjoint from $\{r,s\}$ has length at least $\ell$. Therefore, no component of $\mcal{X}_{J}$ which is disjoint from $\{r,s\}$ is a cycle. 

If $r\neq s$, by Lemma~\ref{lem:structureofX}, it follows that the component of $\mcal{X}_{V_{r,s}^{\pi}}$ containing $(r,s)$ is a path. Now (1) follows since $\mcal{X}_{J}$ is a spanning subgraph of $\mcal{X}_{V_{r,s}^{\pi}}$. 

If $r=s$, by Lemma~\ref{lem:structureofX}, it follows that the component of $\mcal{X}_{V_{r,s}^{\pi}}$ containing $r$ is the loop on $r$. Now (2) follows since $\mcal{X}_{J}$ is a spanning subgraph of $\mcal{X}_{V_{r,s}^{\pi}}$. 
\end{proof}

We now estimate the size of $\sym(n,k)[(r,s);\pi]$ in a special case.

\begin{lem}\label{lem:HMlowerbound}
Given $\xi\in (0,0.5)$ and $\alpha \in (0,1)$, for $n$ sufficiently large and $k\leq n^{\alpha}$, for each  $r,s\in [n]$, there exists $\pi \in \sym(n,k)$ such that $(r,s,\pi)$ is admissible and 
\[(1-1/e -\xi)\st(n-1,k-\delta_{r,s}) \leq |\sym(n,k)[(r,s);\pi]| \leq (1-1/e +\xi)\st(n-1,k-\delta_{r,s}).\] (Here $\delta_{r,s}$ is the Kronecker delta function.)
\end{lem}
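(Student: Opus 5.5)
We take $\pi$ to have all cycles long, so that Corollary~\ref{cor:xjlongcycle} applies. Concretely, choose $\pi\in\sym(n,k)$ whose $k$ cycles all have length at least $\ell:=\lfloor n/k\rfloor-1\geq n^{1-\alpha}/2$. If $r\neq s$, we also require $r,s$ to lie in distinct cycles of $\pi$ (possible when $k\geq2$). When $k=1$, admissibility forces $r\neq s$ and $\pi$ is an $n$-cycle, so $r,s$ share the cycle; we treat this case separately below. Note that $\pi(r)\neq s$ holds automatically in the distinct-cycle case. When $r=s$ it just says $r$ is not a fixed point, which holds since $\ell>1$.

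For $|J|<\ell$, Corollary~\ref{cor:xjlongcycle} gives $\mu_{r,s}^{\pi}(J)=\delta_{r,s}$. Hence every term of $\bb{S}_z$ with $z<\ell$ equals $\st(n-z-1,k-\delta_{r,s})$, and
\[\bb{S}_z((r,s);\pi)=\binom{n-2}{z}\st(n-1-z,k-\delta_{r,s}).\]
In the $k=1$ case, with $r,s$ on the same $n$-cycle, Lemma~\ref{lem:structureofX}(1) shows that a cycle appears in $\mcal{X}_J$ only when $J$ contains the whole arc set of the cycle through $r,s$. That cycle has length at least $2$, and one chooses $\pi$ so that it is long. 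So again $\mu=0$ for $|J|$ below a threshold tending to infinity. (For $k=1$ one can also compute directly: the count is the number of $(n-1)$-cycles with a fixed point of an $(n-1)$-cycle-shifted structure, which gives roughly the derangement ratio.)

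Next, write $m'=n-1$ and $k'=k-\delta_{r,s}$. By Lemma~\ref{lem:keyineq1} with $w=1$, $x=z$ (valid for $z\leq k'-1$, and extendable by the same recursion argument for $z\leq$ any fixed $Z$), and by \eqref{eq:changenumineq} for the reverse direction, we get
\[(m'-1)\ff{z}\,\st(m'-z,k')\leq \st(m',k')\leq (1-\delta)^{-z}(m'-1)\ff{z}\,\st(m'-z,k').\]
Combining this with $\binom{n-2}{z}=(n-2)\ff{z}/z!$ gives
\[\frac{(1-\delta)^{z}}{z!}\leq\frac{\bb{S}_z}{\st(n-1,k')}\leq \frac{1}{z!}.\]
The key step is this comparison, because Lemma~\ref{lem:keyineq1} requires $x\leq l-1$. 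For $z\geq k'$ we would instead rerun its proof directly: each step of the recursion costs a factor $1+1/R(\cdot,k')\cdot(\cdot)^{-1}\leq (1-\delta)^{-1}$ by Corollary~\ref{cor:RLbound}, and this only needs $z\leq 2n^{\alpha}$, not $z<k$. Only fixed $z\leq 2M+1$ are needed anyway.

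Finally, fix $M$ with $1/(2M+1)!<\xi/4$, and then $\delta$ small enough that the $(1-\delta)^{\pm(2M+1)}$ errors cost at most $\xi/4$. Apply the Bonferroni inequalities \eqref{eq:bonf} with $2M$ and $2M+1$ terms; since $2M+1<\ell$ for large $n$, every term involved falls in the regime above. This yields
\[|\sym(n,k)[(r,s);\pi]|/\st(n-1,k')=\sum_{z=1}^{2M}\frac{(-1)^{z+1}}{z!}\pm\xi/2,\]
and the alternating sum is within $\xi/4$ of $1-1/e$, which proves both bounds. The main obstacle is the uniform two-sided estimate on $\st(n-1-z,k')/\st(n-1,k')$ for all $z\leq 2M+1$ independently of $k$, which the argument of Lemma~\ref{lem:keyineq1} handles. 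The choice of $\pi$ in the $k=1$ case is a minor technical point.
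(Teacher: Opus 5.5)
Your proposal is correct and follows essentially the paper's route. You choose a long-cycle $\pi$ (with $r,s$ in distinct cycles when $r\neq s$ and $k\geq 2$, and a suitable $n$-cycle when $k=1$), use Corollary~\ref{cor:xjlongcycle} to get $\mu_{r,s}^{\pi}(J)=\delta_{r,s}$ for small $|J|$, take the two-sided ratio bounds from \eqref{eq:changenumineq} and the proof of Lemma~\ref{lem:keyineq1}, and finish with the Bonferroni inequalities \eqref{eq:bonf}. The differences are minor: you truncate at a fixed depth $2M+1$ rather than a depth growing with $n$, which slightly simplifies the limit argument, and you correctly note that the recursion in Lemma~\ref{lem:keyineq1} does not need $x\leq l-1$, which the paper's proof uses tacitly when it takes $z$ up to $2m+1$ with $k$ possibly small.
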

\begin{proof}
We note that $n^{\alpha} n^{(1-\alpha)^{2}}=n ^{1-\alpha(1-\alpha)}<n$.
Therefore, since $k\leq n^{\alpha}$, there exists a permutation in $\sym(n,k)$ with all of its cycles of length at least $\lfloor n^{(1-\alpha)^{2}} \rfloor$. We shall refer to such a permutation as a {\it long-cycle permutation}.

Given $r,s\in[n]$, choose a long-cycle permutation $\pi$ as follows. If $r\neq s$ and $k\geq 2$, we pick $\pi$ such that $r$ and $s$ lie in distinct cycles of $\pi$. If $r\neq s$ and $k=1$, choose $\pi$ to be an $n$-cycle of the form $(r,a,s,\ldots)$, where $a\in[n]\setminus \{r,s\}$; such an $a$ exists for $n\geq3$. If $r=s$, choose any long-cycle permutation $\pi$.

Using \eqref{eq:bonf}, we find a lower bound on $|\sym(n,k)[(r,s);\pi]|$. 
Let $m$ be the largest integer such that $2m+1 < \min\{n^{\alpha}, \lfloor n^{(1-\alpha)^{2}}\rfloor \}$. Using \eqref{eq:bonf}, we have 
\begin{equation}\label{eq:bonfineq}
\sum\limits_{z=1}^{2m+1} (-1)^{z+1}\bb{S}_{z}((r,s);\pi)\geq |\sym(n,k)[(r,s);\pi]| \geq \sum\limits_{z=1}^{2m} (-1)^{z+1}\bb{S}_{z}((r,s);\pi),
\end{equation}
 where $\bb{S}_{z}((r,s);\pi)$ is as defined by \eqref{eq:bonfterm}. 

Consider $J \subset V_{r,s}^{\pi}=[n]\setminus \{r,\pi^{-1}(s)\}$ with $|J|\leq 2m+1$. 
We recall that the number $\mu_{r,s}^{\pi}(J)$ represents the number of cycles in $\mcal{X}_{J}$. Since $|J|\leq 2m+1 <\lfloor n^{(1-\alpha)^{2}}\rfloor$ and every cycle of $\pi$ has length at least $\lfloor n^{(1-\alpha)^{2}}\rfloor$, Corollary~\ref{cor:xjlongcycle} implies $\mu_{r,s}^{\pi}(J)=\delta_{r,s}$, where $\delta_{r,s}$ is the standard Kronecker delta function. Applying this in \eqref{eq:bonfterm}, we obtain

\begin{equation}\label{eq:bonftermlongpermint}
\bb{S}_{z}((r,s);\pi) = \binom{n-2}{z} \st(n-z-1,k-\delta_{r,s}),
\end{equation}
for all $1\leq z\leq 2m+1$.

Given $\epsilon\in (0,0.5)$ and $n$ sufficiently large, by Lemmas~\ref{lem:keyineq} and \ref{lem:keyineq1}, we have 
\[\dfrac{(1-\epsilon)^{z}}{(n-2)\ff{z}} \st(n-1,k-\delta_{r,s}) \leq \st(n-z-1,k-\delta_{r,s}) \leq \dfrac{1}{(n-2)\ff{z}} \st(n-1,k-\delta_{r,s}).\]
Applying the above along with \eqref{eq:bonftermlongpermint} yields
\begin{equation}\label{eq:bonftermlongperm}
\dfrac{(1-\epsilon)^{z}}{z!}\st(n-1,k-\delta_{r,s}) \leq \bb{S}_{z}((r,s);\pi) \leq \dfrac{1}{z!}\st(n-1,k-\delta_{r,s})
\end{equation}
for all $1\leq z\leq 2m+1$.
Applying the above in \eqref{eq:bonfineq}, we have
\begin{align*}
\dfrac{|\sym(n,k)[(r,s);\pi]|}{\st(n-1,k-\delta_{r,s})} &\geq \sum\limits_{i=1}^{m} \bb{S}_{2i-1}((r,s);\pi)-\sum\limits_{i=1}^{m} \bb{S}_{2i}((r,s);\pi) \\
&\geq \sum\limits_{i=1}^{m} \dfrac{(1-\epsilon)^{(2i-1)}}{(2i-1)!} - \sum\limits_{i=1}^{m} \dfrac{1}{(2i)!}  \\
&\geq 1+\sum\limits_{i=1}^{m} \dfrac{(1-\epsilon)^{(2i-1)}}{(2i-1)!} - \sum\limits_{i=0}^{m} \dfrac{1}{(2i)!}
\end{align*}
A similar argument leads to an upper bound, that is, we have
\begin{equation}\label{eq:partialsum0}
\begin{aligned}
1+\sum_{i=1}^{m}
\frac{(1-\epsilon)^{2i-1}}{(2i-1)!}
-\sum_{i=0}^{m}\frac{1}{(2i)!}
&\leq
\frac{|\sym(n,k)[(r,s);\pi]|}
     {\st(n-1,k-\delta_{r,s})} \\
&\leq
1+\sum_{i=1}^{m}
\frac{1}{(2i-1)!}
-\sum_{i=0}^{m}
\frac{(1-\epsilon)^{2i}}{(2i)!}.
\end{aligned}
\end{equation} for any given $\epsilon \in (0,0.5)$ and for all sufficiently large $n$. 

Set $A(x)=1+\sinh(1)-\cosh(1-x)$ and $B(x)=1+ \sinh(1-x)-\cosh(1)$. Fix $\xi\in (0,0.5)$, and by continuity of $A$ and $B$, choose $\epsilon \in (0,0.5)$ sufficiently small such that 
$A(\epsilon)>A(0)-\xi/2$  and $B(\epsilon)<B(0)+\xi/2$. We note that as $n\to \infty$, the upper bound in \eqref{eq:partialsum0} converges to $B(\epsilon)$ and the lower bound to $A(\epsilon)$. Therefore, for all $n$ sufficiently large, we have
\[B(0)+\xi>B(\epsilon)+\xi/2> \dfrac{|\sym(n,k)[(r,s);\pi]|}
     {\st(n-1,k-\delta_{r,s})}>A(\epsilon)-\xi/2>A(0)-\xi.\]

The result follows by $A(0)=1-1/e=B(0)$.
\end{proof}

Next, we attempt to find an upper bound on the maximum possible size of a Hilton--Milner family. We start by finding bounds on $\bb{S}_{z}((r,s);\pi)$, where $(r,s,\pi)$ is an admissible triple. We first show that for the vast majority of $Z \in \binom{V_{r,s}^{\pi}}{z}$, we have $\mu_{r,s}^{\pi}(Z)=\delta_{r,s}$. Specifically, we bound the size of
\begin{equation}\label{eq:exceptional}
E_{r,s}^{\pi}(z):=\left\{Z\in \binom{V_{r,s}^{\pi}}{z}\colon\ \mu_{r,s}^{\pi}(Z)-\delta_{r,s}>0\right\}.
\end{equation}

\begin{lem}\label{lem:overestimate}
Let $n,k,z$ be positive integers with $k,z\leq n$. Then, for an admissible triple $(r,s,\pi)$, we have
\[\left|E_{r,s}^{\pi}(z)\right|\leq \binom{n-2}{z} \dfrac{kz}{n-2}.\]
\end{lem}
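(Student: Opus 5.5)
The plan is a union bound over the cycles of $\mcal{X}:=\mcal{X}_{V_{r,s}^{\pi}}$. The key observation is that $\mcal{X}$ is a Laguerre digraph: every vertex has in- and out-degree at most $1$. So any cycle of a spanning subgraph $\mcal{X}_{Z}$ must be an entire cycle component of $\mcal{X}$. Indeed, a directed cycle in such a digraph is already a whole connected component, and $\mcal{X}_{Z}\subseteq \mcal{X}$. Hence $\mu_{r,s}^{\pi}(Z)$ counts exactly those cycles $C$ of $\mcal{X}$ all of whose arcs lie in $\Delta(\pi,Z)\cup\{(r,s)\}$.

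First, list the cycles of $\mcal{X}$ using Lemma~\ref{lem:structureofX} and the case $r=s$.
\begin{itemize}
\item The cycles of $\mcal{L}_{\pi}$ disjoint from $\{r,s\}$: there are at most $k$ of these.
\item In case (1) with $r\neq s$, the cycle through $r$ and $s$. Here the cycle of $\pi$ containing $r,s$ is consumed, so the total number of cycles of $\mcal{X}$ is still at most $k$.
\item When $r=s$, the loop at $r$. The rest of the $\pi$-cycle through $r$ becomes a path, and this loop lies in every $\mcal{X}_{Z}$.
\end{itemize}
For each cycle $C$ of $\mcal{X}$ other than the loop at $r$, let $T_C\subseteq V_{r,s}^{\pi}$ be the set of tails of the arcs of $C$ that come from $\Delta(\pi,\cdot)$, i.e. the arcs other than $(r,s)$. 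Then $C\subseteq \mcal{X}_Z$ if and only if $T_C\subseteq Z$. Every such $C$ has $t_C:=|T_C|\geq 1$: a cycle disjoint from $\{r,s\}$ uses only $\pi$-arcs, and the cycle through $r\neq s$ has length at least $2$ (its arc $(r,s)$ is not a loop), so it contains at least one $\pi$-arc.

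Next, characterise the exceptional sets. Since the loop at $r$ contributes exactly $\delta_{r,s}$ to $\mu_{r,s}^{\pi}(Z)$ for every $Z$, we have $Z\in E_{r,s}^{\pi}(z)$ if and only if $T_C\subseteq Z$ for some cycle $C$ of $\mcal{X}$ other than that loop. There are at most $k$ such cycles, so the union bound gives
\[
|E_{r,s}^{\pi}(z)|\le \sum_{C}\binom{n-2-t_C}{z-t_C}.
\]
Finally, $\binom{n-2-t}{z-t}$ is non-increasing in $t$, so for $t_C\geq1$ each term is at most $\binom{n-3}{z-1}=\binom{n-2}{z}\frac{z}{n-2}$. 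Summing over at most $k$ cycles gives the claimed bound $\binom{n-2}{z}\frac{kz}{n-2}$. Terms with $t_C>z$ contribute $0$, which is consistent with this bound.

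The main point requiring care is the bookkeeping of which cycles are counted. One must check that the loop at $r$ (when $r=s$) is exactly what $\delta_{r,s}$ subtracts. One must also check that the cycle through $r\neq s$ in case (1) has $t_C\geq 1$ and is counted among at most $k$ cycles. Everything else is the routine binomial monotonicity above.
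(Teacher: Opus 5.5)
Your proposal is correct and follows essentially the same route as the paper: a union bound over the at most $k$ cycles of $\mcal{X}_{V_{r,s}^{\pi}}$ other than the loop at $r$. Each such cycle forces a nonempty set into $Z$, so it contributes at most $\binom{n-3}{z-1}=\frac{z}{n-2}\binom{n-2}{z}$. Your set $T_C$ coincides with the paper's $C\setminus\{r\}$, and you make explicit a step the paper leaves implicit: every cycle of $\mcal{X}_Z$ is a whole cycle component of $\mcal{X}_{V_{r,s}^{\pi}}$.
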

\begin{proof}
We recall that $\cy(\Delta(\pi,Z) \cup \{(r,s)\})$ is the number of cycles in the digraph $\mcal{X}_{Z}=([n],\Delta(\pi,Z) \cup \{(r,s)\})$. Thus, $\mu_{r,s}^{\pi}(Z)-\delta_{r,s}$ is the number of cycles in $\mcal{X}_{Z}$ which are not the loop on $r$. 

The digraph $\mcal{X}_{Z}$ is a spanning subgraph of $\mcal{X}_{V_{r,s}^{\pi}}$. Let $\mcal{B}$ denote the set of cycles in $\mcal{X}_{V_{r,s}^{\pi}}$.
From Lemma~\ref{lem:structureofX}, $|\mcal{B}| \leq k$. We note that $C \in \mcal{B}$ contributes to $\mu_{r,s}^{\pi}(Z)-\delta_{r,s}$ if and only if $\emptyset \subsetneq C\setminus \{r\} \subseteq Z$. Therefore by the union bound, we have   
\begin{align*}
\left|E_{r,s}^{\pi}(z)\right| &\leq \sum\limits_{\substack{C \in \mcal{B}\\ 0<|C\setminus \{r\}| \leq z}} \binom{n-2-|C\setminus \{r\}|}{z-|C\setminus \{r\}|}.
\end{align*}
For all $C$ in the index of the summation above, we have $|C\setminus \{r\}|\geq 1$ and thus \[\binom{n-2-|C\setminus \{r\}|}{z-|C\setminus \{r\}|} \leq \binom{n-3}{z-1}.\] Therefore, we have
\begin{align*}
\left|E_{r,s}^{\pi}(z)\right| &\leq \sum\limits_{\substack{C \in \mcal{B}\\  0<|C\setminus \{r\}| \leq z}} \binom{n-3}{z-1} \\
& \leq |\mcal{B}|\binom{n-3}{z-1}\\ 
&\leq k \binom{n-3}{z-1} = \dfrac{kz}{n-2} \binom{n-2}{z}. 
\end{align*}
\end{proof}

We now find a lower bound on $\bb{S}_{z}((r,s);\pi)$.
\begin{align*}
\dfrac{\bb{S}_{z}((r,s);\pi)}{\st(n-1,k-\delta_{r,s})}
& = \sum\limits_{Z \in \binom{V_{r,s}^{\pi}}{z}}\dfrac{\st(n-z-1,k-\delta_{r,s})}{\st(n-1,k-\delta_{r,s})}\\& \quad + \sum\limits_{Z \in E_{r,s}^{\pi}(z)} \dfrac{\st(n-z-1,k-\mu_{r,s}^{\pi}(Z)) - \st(n-z-1,k-\delta_{r,s}) }{\st(n-1,k-\delta_{r,s})} \\
&\geq \sum\limits_{Z \in \binom{V_{r,s}^{\pi}}{z}}\dfrac{\st(n-z-1,k-\delta_{r,s})}{\st(n-1,k-\delta_{r,s})}  -\sum\limits_{Z \in E_{r,s}^{\pi}(z)}\dfrac{\st(n-z-1,k-\delta_{r,s})}{\st(n-1,k-\delta_{r,s})}\\
&\geq \dfrac{{\st(n-z-1,k-\delta_{r,s})}}{\st(n-1,k-\delta_{r,s})}\left( \binom{n-2}{z}-\left|E_{r,s}^{\pi}(z)\right| \right)
\end{align*} 
Using Lemma~\ref{lem:overestimate}, we have 
\begin{align}\label{eq:bonflbint}
\dfrac{\bb{S}_{z}((r,s);\pi)}{\st(n-1,k-\delta_{r,s})} &\geq  \dfrac{{\st(n-z-1,k-\delta_{r,s})}}{{\st(n-1,k-\delta_{r,s})}} \binom{n-2}{z} \times \left(1-\dfrac{kz}{n-2} \right).
\end{align}

Applying Lemma~\ref{lem:keyineq1} yields the following result.
\begin{lem}\label{lem:bonflb}
Given $\epsilon\in(0,1/2)$ and $\alpha\in(0,1)$, the following holds
for all sufficiently large $n$. If $k=k(n)$ and $z=z(n)$ are integers
with $1\leq k,z\leq n^\alpha$ and $kz=o(n)$, then, for an admissible
triple $(r,s,\pi)$, we have
\[
\bb{S}_{z}((r,s);\pi)
\geq
\frac{(1-\epsilon)^{z+1}}{z!}\,
\st(n-1,k-\delta_{r,s}).
\]
\end{lem}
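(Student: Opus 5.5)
We start from the inequality \eqref{eq:bonflbint}, which was derived just above for an arbitrary admissible triple $(r,s,\pi)$ using Lemma~\ref{lem:overestimate}:
\[
\frac{\bb{S}_{z}((r,s);\pi)}{\st(n-1,k-\delta_{r,s})} \geq \frac{\st(n-z-1,k-\delta_{r,s})}{\st(n-1,k-\delta_{r,s})}\binom{n-2}{z}\left(1-\frac{kz}{n-2}\right).
\]
Two factors on the right remain to be bounded from below. These are the Stirling ratio combined with the binomial coefficient, and the correction factor $1-kz/(n-2)$. The $(1-\epsilon)^{z}$ will come from the first factor and the extra $(1-\epsilon)$ from the second.

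For the Stirling ratio, we will apply Lemma~\ref{lem:keyineq1} with $m=n$, $w=1$, $x=z$, $l=k-\delta_{r,s}$ and $\delta=\epsilon$. Its hypotheses hold for $n$ large enough: $w=1\le n^{\alpha}$ and $l\le k\le n^{\alpha}$, and $n\geq\max\{(2e)^{e/(1-\alpha)},e^{(1-\epsilon)/(\epsilon(1-\alpha))}\}$.

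The lemma also requires $x\le l-1$, which is a real restriction. I plan to handle it by noting that the proof of Lemma~\ref{lem:keyineq1} never uses $x\le l-1$. It only needs $w+x\le 2m^{\alpha}$ (true since $z\le n^{\alpha}$) and $l>1$. The degenerate cases are dealt with separately:
\begin{itemize}
\item If $l=1$, then $\st(p,1)=(p-1)!$, so the ratio is exact.
\item If $l=0$, then the quantity is $\st(n-z-1,0)=0$. This case ($k=1$, $r=s$) is excluded by admissibility.
\end{itemize}
The step I expect to need most care is confirming this reusability of Lemma~\ref{lem:keyineq1}; if it fails, I would instead iterate the recursion \eqref{eq:rec} directly with Corollary~\ref{cor:RLbound}. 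Either route gives
\[
\st(n-1,l)\le (1-\epsilon)^{-z}(n-2)\ff{z}\,\st(n-1-z,l),
\]
and therefore
\[
\frac{\st(n-z-1,l)}{\st(n-1,l)}\binom{n-2}{z}\geq \frac{(1-\epsilon)^{z}}{(n-2)\ff{z}}\cdot\frac{(n-2)\ff{z}}{z!}=\frac{(1-\epsilon)^{z}}{z!}.
\]

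For the correction factor, the hypothesis $kz=o(n)$ gives $kz/(n-2)\to 0$. So for all sufficiently large $n$ we have $1-kz/(n-2)\geq 1-\epsilon$. This is the only place the condition $kz=o(n)$ enters, and "sufficiently large" must be understood uniformly for the given functions $k(n)$ and $z(n)$.

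Multiplying the two bounds gives
\[
\bb{S}_{z}((r,s);\pi)\geq \frac{(1-\epsilon)^{z+1}}{z!}\,\st(n-1,k-\delta_{r,s}).
\]
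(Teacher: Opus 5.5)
Your proposal is correct and matches the paper's proof: you start from \eqref{eq:bonflbint}, apply Lemma~\ref{lem:keyineq1} with $m=n$, $w=1$, $x=z$ to get the factor $(1-\epsilon)^{z}/z!$, and use $kz=o(n)$ to get the extra factor $1-\epsilon$. You also rightly notice that the hypothesis $x\le l-1$ of Lemma~\ref{lem:keyineq1} can fail here, which the paper passes over, and your fix is sound: that lemma's proof only needs $x\le m^{\alpha}$ (so that $w+x\le 2m^{\alpha}$), the case $l=1$ is an exact identity, and $l=0$ is ruled out by admissibility.
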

\begin{proof}
Given $\epsilon>0$, for $n$ sufficiently large, Lemma~\ref{lem:keyineq1} implies that \[\dfrac{{\st(n-z-1,k-\delta_{r,s})}}{{\st(n-1,k-\delta_{r,s})}} \geq \dfrac{(1-\epsilon)^{z}}{(n-2)\ff{z}}.\] Applying this in \eqref{eq:bonflbint} yields 
\[\dfrac{\bb{S}_{z}((r,s);\pi)}{\st(n-1,k-\delta_{r,s})} \geq \dfrac{(1-\epsilon)^{z}}{z!} \times (1-\dfrac{kz}{n-2}).\] 
Since $kz=o(n)$, we have $\dfrac{kz}{n-2}=o(1)$, and thus $\dfrac{kz}{n-2}<\epsilon$ for all sufficiently large $n$. This concludes the proof. 
\end{proof}
Next, we focus on finding an upper bound on $\bb{S}_{z}((r,s); \pi)$. We have
\begin{align*}
\dfrac{\bb{S}_{z}((r,s);\pi)}{\st(n-1,k-\delta_{r,s})}
& = \sum\limits_{Z \in \binom{V_{r,s}^{\pi}}{z}}\dfrac{\st(n-z-1,k-\delta_{r,s})}{\st(n-1,k-\delta_{r,s})}\\& \quad + \sum\limits_{E_{r,s}^{\pi}(z)} \dfrac{\st(n-z-1,k-\mu_{r,s}^{\pi}(Z)) - \st(n-z-1,k-\delta_{r,s}) }{\st(n-1,k-\delta_{r,s})} \\
&\leq \sum\limits_{Z \in \binom{V_{r,s}^{\pi}}{z}}\dfrac{\st(n-z-1,k-\delta_{r,s})}{\st(n-1,k-\delta_{r,s})} \\ & \quad + \sum\limits_{Z \in E_{r,s}^{\pi}(z)}\dfrac{\st(n-z-1,k-\mu_{r,s}^{\pi}(Z))}{\st(n-1,k-\delta_{r,s})} \\
&\leq \binom{n-2}{z} \dfrac{\st(n-z-1,k-\delta_{r,s})}{\st(n-1,k-\delta_{r,s})}\\&\quad + \sum\limits_{Z \in E_{r,s}^{\pi}(z)}\dfrac{\st(n-z-1,k-\mu_{r,s}^{\pi}(Z))}{\st(n-1,k-\delta_{r,s})}\\
&\leq \dfrac{\st(n-z-1,k-\delta_{r,s})}{\st(n-1,k-\delta_{r,s})} \left(\binom{n-2}{z} \right.\\ & \left. \hspace{1.5in} + \sum\limits_{Z \in E_{r,s}^{\pi}(z)}\dfrac{\st(n-z-1,k-\mu_{r,s}^{\pi}(Z))}{\st(n-z-1,k-\delta_{r,s})} \right).
\end{align*} 
Using \eqref{eq:changenumineq}, we have $\dfrac{\st(n-z-1,k-\delta_{r,s})}{\st(n-1,k-\delta_{r,s})} \leq \dfrac{1}{(n-2)\ff{z}}$ and thus, we have
\begin{align*}
\dfrac{\bb{S}_{z}((r,s);\pi)}{\st(n-1,k-\delta_{r,s})}&\leq \dfrac{1}{(n-2)\ff{z}}\left(\binom{n-2}{z}+ \sum\limits_{Z \in E_{r,s}^{\pi}(z)}\dfrac{\st(n-z-1,k-\mu_{r,s}^{\pi}(Z))}{\st(n-z-1,k-\delta_{r,s})} \right)\\
&\leq \dfrac{1}{z!}\left(1 + \dfrac{|E_{r,s}^{\pi}(z)|}{\binom{n-2}{z}}\times \max\limits_{Z \in E_{r,s}^{\pi}(z)}\dfrac{\st(n-z-1,k-\mu_{r,s}^{\pi}(Z))}{\st(n-z-1,k-\delta_{r,s})} \right).
\end{align*}(When $E_{r,s}^{\pi}(z)=\emptyset$, we adopt the convention $\max_{\emptyset}=0$.)
By Lemma~\ref{lem:overestimate}, we have \[\dfrac{|E_{r,s}^{\pi}(z)|}{\binom{n-2}{z}} \leq \dfrac{kz}{n-2}\], and thus
\begin{equation}\label{eq:bonfubint}
\dfrac{\bb{S}_{z}((r,s);\pi)}{\st(n-1,k-\delta_{r,s})}\leq \dfrac{1}{z!}\left(1 + \dfrac{kz}{n-2}\times \max\limits_{Z \in E_{r,s}^{\pi}(z)}\dfrac{\st(n-z-1,k-\mu_{r,s}^{\pi}(Z))}{\st(n-z-1,k-\delta_{r,s})} \right).
\end{equation}
To apply \eqref{eq:bonfineq} as in the proof of Lemma~\ref{lem:HMlowerbound}, we choose $m$  such that \[\dfrac{\bb{S}_{z}((r,s);\pi)}{\st(n-1,k-\delta_{r,s})}\leq (1+o(1))/z!\] uniformly for all $z\leq 2m+1$. In view of \eqref{eq:bonfubint}, it suffices to pick $m$ such that 
\begin{equation}\label{eq:ideal}
\dfrac{kz}{n-2}\times \max\limits_{Z \in E_{r,s}^{\pi}(z)}\dfrac{\st(n-z-1,k-\mu_{r,s}^{\pi}(Z))}{\st(n-z-1,k-\delta_{r,s})}=o(1)
\end{equation}
 uniformly for all $z\leq 2m+1$. 

To find upper bounds on ratios of the form $\dfrac{\st(n-z-1,k-\mu_{r,s}^{\pi}(Z))}{\st(n-z-1,k-\delta_{r,s})}$, we appeal to Lemma~\ref{lem:keyineq}. By Lemma~\ref{lem:keyineq}, given $\alpha\in (0,1)$, for all $n$ sufficiently large, $k\leq n^{\alpha}$ and $z+1\leq n^{\alpha}$, we have
\[\dfrac{\st(n-z-1,k-\mu_{r,s}^{\pi}(Z))}{\st(n-z-1,k-\delta_{r,s})} \leq \left(\dfrac{2n^{\alpha}}{(1-\alpha)\ln n }\right)^{\mu_{r,s}^{\pi}(Z)-\delta_{r,s}},\] 
for all $Z\in E_{r,s}^{\pi}(z)$.
Since $\mu_{r,s}^{\pi}(Z)$ is the number of cycles in a digraph with $z+1$ arcs with $(r,s)$ as one of its arcs, and since $\dfrac{2n^{\alpha}}{(1-\alpha)\ln n }\geq 1$ for all large $n$, the following statement is true. Given $\alpha\in (0,1)$, for all $n$ sufficiently large, $k\leq n^{\alpha}$, $z+1\leq n^{\alpha}$, we have
\begin{align}\label{eq:ratioub}
\max\limits_{Z \in E_{r,s}^{\pi}(z)} \dfrac{\st(n-z-1,k-\mu_{r,s}^{\pi}(Z))}{\st(n-z-1,k-\delta_{r,s})} \leq \left(\dfrac{2n^{\alpha}}{(1-\alpha)\ln n }\right)^{z}, 
\end{align}
As we discussed above, we seek the largest odd value $z$ such that \eqref{eq:ideal} is satisfied. To this end, we consider $\dfrac{kz}{n-2}\left(\dfrac{2n^{\alpha}}{(1-\alpha)\ln n }\right)^{z}$. Since $k\leq n^{\alpha}$, we need to consider $\dfrac{n^{\alpha}z}{n-2}\left(\dfrac{2n^{\alpha}}{(1-\alpha)\ln n }\right)^{z}$. Thus, when $\alpha=1/4$, the largest odd value of $z$ for which \eqref{eq:ideal} holds is $z=3$; indeed, for $z=3$ the expression is $O((\ln n)^{-3})=o(1)$. For $\alpha>1/4$, the largest odd value is $z=1$. We therefore restrict ourselves to the regime $k\leq n^{1/4}$. 

\begin{lem}\label{lem:bonfub}
Given $\epsilon>0$, for all sufficiently large $n$, if $k,z$ are integers with $1\leq k\leq n^{0.25}$ and $z\in \{1,2,3\}$, and if $(r,s,\pi)$ is an admissible triple, then
\[\bb{S}_{z}((r,s);\pi) \leq \dfrac{1}{z!} ( 1+\epsilon)\st(n-1,k-\delta_{r,s}).\]
\end{lem}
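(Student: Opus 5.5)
The plan is to start from the general estimate \eqref{eq:bonfubint}, which already gives
\[
\frac{\bb{S}_{z}((r,s);\pi)}{\st(n-1,k-\delta_{r,s})}\leq \frac{1}{z!}\left(1+\frac{kz}{n-2}\, M_z\right),\qquad M_z:=\max_{Z\in E_{r,s}^{\pi}(z)}\frac{\st(n-z-1,k-\mu_{r,s}^{\pi}(Z))}{\st(n-z-1,k-\delta_{r,s})}.
\]
So it suffices to show that $\frac{kz}{n-2}M_z\leq \epsilon$ for $z\in\{1,2,3\}$, $k\leq n^{0.25}$ and $n$ large. If $E_{r,s}^{\pi}(z)=\emptyset$ there is nothing to prove, since we use the convention $M_z=0$.

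First I bound the exponent $\mu_{r,s}^{\pi}(Z)-\delta_{r,s}$. The digraph $\mcal{X}_Z$ has $z+1$ arcs, and one of them is $(r,s)$. By Lemma~\ref{lem:structureofX}, the cycles of $\mcal{X}_Z$ that are not the loop at $r$ are cycles of $\mcal{X}_{V_{r,s}^\pi}$, and each of them has at least one vertex other than $r$ in $Z$. Hence their number is at most $z$, and so $\mu_{r,s}^{\pi}(Z)-\delta_{r,s}\leq z$.

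Next I convert the Stirling ratio into a power. I apply Lemma~\ref{lem:keyineq} with $\beta=1/4$, $m=n$, $w=z+1\leq 4$, $x=0$, $l=k-\delta_{r,s}$ and $y=\mu_{r,s}^{\pi}(Z)-\delta_{r,s}$. This gives
\[
M_z\leq \left(\frac{8n^{1/4}}{3\ln n}\right)^{z}.
\]
The hypotheses of Lemma~\ref{lem:keyineq} need $l\geq 1$ and $y\leq l$; the degenerate cases $k-\mu\leq 0$ make the numerator $0$ and are trivial. There is one technical wrinkle: Lemma~\ref{lem:keyineq} is stated with $m=n$, but the Stirling numbers here have top argument $n-z-1$. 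Taking $m=n$ and $w=z+1$ handles this directly, since $w\leq m^{\beta}$ holds for $n$ large.

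Finally I multiply out:
\[
\frac{kz}{n-2}M_z\leq \frac{3n^{1/4}}{n-2}\left(\frac{8n^{1/4}}{3\ln n}\right)^{3}=O\!\left(\frac{n}{n(\ln n)^{3}}\right)=O\big((\ln n)^{-3}\big).
\]
This tends to $0$, so it is below $\epsilon$ for $n$ large, uniformly in $k$, in $(r,s,\pi)$ and in $z\leq 3$. The main point requiring care is exactly this exponent count: one must verify that at most $z$ extra cycles (not $z+1$) can appear. With $z+1$ extra cycles the bound would be $n^{5/4}/n$, which does not vanish. The count holds because the arc $(r,s)$ alone can only form the loop, which is excluded when $r=s$, or else it lies on a path or cycle together with some $\pi$-arc.

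For $z\leq 2$ the same estimate is even easier. For $k=1$ the set $E_{r,s}^{\pi}(z)$ may be nonempty, but then $k-\mu\leq 0$ forces the Stirling numerator to vanish, so those terms contribute nothing.
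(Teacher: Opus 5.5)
Your proposal is correct and follows the paper's route: it combines \eqref{eq:bonfubint} with the bound $\max_{Z\in E_{r,s}^{\pi}(z)}\st(n-z-1,k-\mu_{r,s}^{\pi}(Z))/\st(n-z-1,k-\delta_{r,s})\le \bigl(8n^{1/4}/(3\ln n)\bigr)^{z}$, and then checks that $\frac{kz}{n-2}$ times this is $O((\ln n)^{-3})$ for $z\le 3$. The paper records that bound as \eqref{eq:ratioub}, obtained from Lemma~\ref{lem:keyineq} and the count $\mu_{r,s}^{\pi}(Z)-\delta_{r,s}\le z$; your explicit parameter choice ($m=n$, $w=z+1$, $x=0$) and your handling of the degenerate cases $k-\mu_{r,s}^{\pi}(Z)\le 0$ just spell out what the paper leaves implicit.
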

\begin{proof}
Provided $k\leq n^{0.25}$ and $z\in \{1,2,3\}$, using \eqref{eq:ratioub}, we have 
\[\max\limits_{Z \in E_{r,s}^{\pi}(z)} \dfrac{\st(n-z-1,k-\mu_{r,s}^{\pi}(Z))}{\st(n-z-1,k-\delta_{r,s})} \leq \left(\dfrac{8n^{0.25}}{3\ln n }\right)^{z}.\] Now using \eqref{eq:bonfubint}, $k\leq n^{0.25}$, and $z\leq 3$, we have for all $n$ sufficiently large,
\begin{align*}
\dfrac{\bb{S}_{z}((r,s);\pi)}{\st(n-1,k-\delta_{r,s})}& \leq \dfrac{1}{z!}\left(1 + \dfrac{kz}{n-2} \left(\dfrac{8n^{0.25}}{3\ln n }\right)^{z}\right) \\
& \leq \dfrac{1}{z!}\left(1 + \dfrac{3n^{0.25}}{n-2} \left(\dfrac{8n^{0.25}}{3\ln n }\right)^{3}\right)\\
& \leq \dfrac{1}{z!}\left(1 + \dfrac{512 n}{9 (n-2) \ln n }\right)\\
&\leq \dfrac{1}{z!}\left(1 + \epsilon \right). 
\end{align*} 
\end{proof}

In the regime $k\leq (\ln n)^{d}$, the above holds for a larger range of $z$.
\begin{lem}\label{lem:bonflnub}
Given $\epsilon>0$ and $d\geq1$, for all sufficiently large $n$, if $k,z$ are integers with $1\leq k\leq (\ln n)^{d}$ and $1\leq z\leq \sqrt{\ln n}$, and if $(r,s,\pi)$ is an admissible triple, then
\[\bb{S}_{z}((r,s);\pi) \leq \dfrac{1}{z!} ( 1+\epsilon)\st(n-1,k-\delta_{r,s}).\]
\end{lem}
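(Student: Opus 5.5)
We follow the proof of Lemma~\ref{lem:bonfub}, starting again from \eqref{eq:bonfubint}. The task is to show that
\[\dfrac{kz}{n-2}\times \max\limits_{Z \in E_{r,s}^{\pi}(z)}\dfrac{\st(n-z-1,k-\mu_{r,s}^{\pi}(Z))}{\st(n-z-1,k-\delta_{r,s})}\]
is at most $\epsilon$, uniformly for $1\leq z\leq \sqrt{\ln n}$ and $k\leq (\ln n)^{d}$. Once this holds, \eqref{eq:bonfubint} immediately gives $\bb{S}_{z}((r,s);\pi)\leq (1+\epsilon)\st(n-1,k-\delta_{r,s})/z!$. If $E_{r,s}^{\pi}(z)=\emptyset$ the bound is trivial, so we may assume $E_{r,s}^{\pi}(z)\neq\emptyset$.

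For the ratio of Stirling numbers we use Lemma~\ref{lem:keyineqln} in place of Lemma~\ref{lem:keyineq}. Set $m=n-z-1$, $l=k-\delta_{r,s}$, $y=\mu_{r,s}^{\pi}(Z)-\delta_{r,s}$, $x=0$ and $w=0$. We need to check the hypotheses. First, $l\leq (\ln n)^{d}$; this is at most $(\ln m)^{d}$ up to a negligible correction, since $m=n-O(\sqrt{\ln n})$. If one wants to be careful, one can instead apply the lemma with $m=n$ and $w=z+1\leq (\ln n)^{d}$, which is exactly why $w$ appears in that lemma. Second, $m>e^{16d^2}$ holds for large $n$. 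Third, $0\leq y\leq l$, since $\mu_{r,s}^{\pi}(Z)\leq k$ whenever the Stirling number is nonzero; if it is zero the ratio is $0$ anyway. The lemma then gives
\[\dfrac{\st(n-z-1,k-\mu_{r,s}^{\pi}(Z))}{\st(n-z-1,k-\delta_{r,s})}\leq \left(2(\ln n)^{d-1}\right)^{y}.\]
Since $\mcal{X}_Z$ has $z+1$ arcs, we have $y\leq z$. The base $2(\ln n)^{d-1}$ is at least $1$, so the ratio is at most $(2(\ln n)^{d-1})^{z}$.

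It remains to do the arithmetic:
\[\dfrac{kz}{n-2}\left(2(\ln n)^{d-1}\right)^{z}\leq \dfrac{(\ln n)^{d}\sqrt{\ln n}}{n-2}\,\exp\!\left(\sqrt{\ln n}\,\bigl(\ln 2+(d-1)\ln\ln n\bigr)\right).\]
The exponent is $O(\sqrt{\ln n}\,\ln\ln n)=o(\ln n)$, so the whole expression is $n^{-1+o(1)}\to 0$ uniformly in $z$ and $k$. Hence it is below $\epsilon$ for all sufficiently large $n$, and this completes the proof.

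The only delicate point is matching the hypotheses of Lemma~\ref{lem:keyineqln}, specifically that $l\leq(\ln m)^d$ when $m=n-z-1$ rather than $n$. The resolution through $w$ described above handles it, since the bound in Corollary~\ref{cor:RLboundln} is stated for all $w\leq(\ln n)^d$.
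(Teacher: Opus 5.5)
Your proposal is correct and follows the paper's proof: bound the Stirling ratio by $(2(\ln n)^{d-1})^{z}$ via Lemma~\ref{lem:keyineqln} and $\mu_{r,s}^{\pi}(Z)-\delta_{r,s}\leq z$, substitute into \eqref{eq:bonfubint}, and check that $kz(2(\ln n)^{d-1})^{z}/(n-2)=o(1)$ for $z\leq\sqrt{\ln n}$ and $k\leq(\ln n)^{d}$. Your choice $m=n$, $w=z+1$ is what justifies the paper's implicit use of $\ln n$ rather than $\ln(n-z-1)$, and your factor $2^{z}\leq 2^{\sqrt{\ln n}}$ is more accurate than the paper's displayed $2^{(d-1)\sqrt{\ln n}}$, which undercounts when $1\leq d<2$.
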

\begin{proof}
Using Lemma~\ref{lem:keyineqln}, for all $Z \in E_{r,s}^{\pi}(z)$, we have 
\[\dfrac{\st(n-z-1,k-\mu_{r,s}^{\pi}(Z))}{\st(n-z-1,k-\delta_{r,s})} \leq \left(2(\ln n)^{d-1}\right)^{\mu_{r,s}^{\pi}(Z)-\delta_{r,s}}.\]
Since $\mu_{r,s}^{\pi}(Z)$ is the number of cycles in a digraph with $z+1$ arcs with $(r,s)$ as one of its arcs, we have $\mu_{r,s}^{\pi}(Z)-\delta_{r,s}\leq z$, and thus  
\[\dfrac{\st(n-z-1,k-\mu_{r,s}^{\pi}(Z))}{\st(n-z-1,k-\delta_{r,s})}\leq \left(2(\ln n)^{d-1}\right)^{z}\] for all $Z\in E_{r,s}^{\pi}(z)$. Using \eqref{eq:bonfubint}, we have
\begin{align*}
\dfrac{\bb{S}_{z}((r,s);\pi)}{\st(n-1,k-\delta_{r,s})}&\leq \dfrac{1}{z!}\left(1 + \dfrac{kz\left(2(\ln n)^{d-1}\right)^{z}}{n-2}\right)\\
& \leq \dfrac{1}{z!} \left( 1 + \dfrac{2^{(d-1)\sqrt{\ln n}}\times (\ln n )^{(d-1)\sqrt{\ln n}+d+0.5} }{n-2}  \right),
\end{align*} 
with the last inequality following by using $z\leq \sqrt{\ln n}$ and $k \leq (\ln n) ^{d}$.
The result follows by observing $\dfrac{2^{(d-1)\sqrt{\ln n}}\times (\ln n )^{(d-1)\sqrt{\ln n}+d+0.5} }{n-2}=o(1)$.
\end{proof}
We are now ready to obtain upper bounds on the size of $\sym(n,k)[(r,s);\pi]$.

\begin{lem}\label{lem:HMub}
Given $\xi\in (0,0.5)$, for $n$ sufficiently large and $k\leq n^{0.25}$, if $(r,s,\pi)$ is an admissible triple, then
\[|\sym(n,k)[(r,s);\pi]| \leq (2/3 +\xi)\st(n-1,k-\delta_{r,s}).\] (Here $\delta_{r,s}$ is the Kronecker delta function.)
\end{lem}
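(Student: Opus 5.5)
We plan to use the Bonferroni upper bound \eqref{eq:bonf} with $m=1$, which is valid since $2m+1=3\leq n-2$ for $n$ large. It gives
\[
|\sym(n,k)[(r,s);\pi]| \leq \bb{S}_{1}((r,s);\pi)-\bb{S}_{2}((r,s);\pi)+\bb{S}_{3}((r,s);\pi).
\]
The terms $\bb{S}_{1}$ and $\bb{S}_{3}$ carry a plus sign, so we need upper bounds for them. Lemma~\ref{lem:bonfub} applies, since $k\leq n^{0.25}$ and $z\in\{1,3\}$, and gives $\bb{S}_{1}\leq (1+\epsilon)\st(n-1,k-\delta_{r,s})$ and $\bb{S}_{3}\leq \tfrac{1}{6}(1+\epsilon)\st(n-1,k-\delta_{r,s})$. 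The term $\bb{S}_{2}$ carries a minus sign, so we need a lower bound. Lemma~\ref{lem:bonflb} with $z=2$ and any fixed $\alpha\in(1/4,1)$ works: its hypotheses hold because $k\leq n^{0.25}\leq n^{\alpha}$ and $kz\leq 2n^{0.25}=o(n)$. It gives $\bb{S}_{2}\geq \tfrac{(1-\epsilon)^{3}}{2}\st(n-1,k-\delta_{r,s})$.

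Combining the three bounds yields
\[
\frac{|\sym(n,k)[(r,s);\pi]|}{\st(n-1,k-\delta_{r,s})}\leq (1+\epsilon)-\frac{(1-\epsilon)^{3}}{2}+\frac{1+\epsilon}{6}.
\]
As $\epsilon\to 0$ this tends to $1-\tfrac12+\tfrac16=\tfrac23$. So, given $\xi$, we fix $\epsilon$ small enough that the right-hand side is below $2/3+\xi$; for instance $\epsilon=\xi/4$ works, because the expression is at most $2/3+\epsilon(1+3/2+1/6)$. We then take $n$ large enough for both lemmas to apply with this $\epsilon$.

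The only point needing care is the uniformity of both lemmas over all admissible triples $(r,s,\pi)$, including the degenerate case $\delta_{r,s}=1$. Both lemmas are stated for arbitrary admissible triples, and $\st(n-1,k-\delta_{r,s})>0$ by admissibility, so this holds. The remaining check is that Lemma~\ref{lem:bonflb} indeed applies at $z=2$ for every $k\leq n^{0.25}$, which was verified above. The genuine difficulty, controlling $\bb{S}_{3}$ in the presence of the exceptional sets $E_{r,s}^{\pi}(3)$, is already absorbed in Lemma~\ref{lem:bonfub}, so no obstacle remains beyond the bookkeeping.
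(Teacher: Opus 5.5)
Your proposal is correct and follows the paper's proof. Both use the Bonferroni bound $\bb{S}_1-\bb{S}_2+\bb{S}_3$, apply Lemma~\ref{lem:bonfub} to $\bb{S}_1$ and $\bb{S}_3$ and Lemma~\ref{lem:bonflb} to $\bb{S}_2$, and expand $(1+\epsilon)-(1-\epsilon)^3/2+(1+\epsilon)/6\leq 2/3+8\epsilon/3$. The only difference is cosmetic: the paper takes $\epsilon=3\xi/8$ to land exactly on $2/3+\xi$, while your $\epsilon=\xi/4$ gives the slightly stronger bound $2/3+2\xi/3$.
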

\begin{proof}
Using \eqref{eq:bonfineq}, we have 
\[|\sym(n,k)[(r,s);\pi]| \leq \bb{S}_{1}((r,s);\pi)-\bb{S}_{2}((r,s);\pi)+\bb{S}_{3}((r,s);\pi).\]

Fix $\xi \in (0,0.5)$ and set $\epsilon=\dfrac{3\xi}{8}$.
 Applying Lemma~\ref{lem:bonflb} and Lemma~\ref{lem:bonfub}, we have
\[|\sym(n,k)[(r,s);\pi]|\leq \st(n-1,k-\delta_{r,s}) \left( 1+\epsilon -\dfrac{(1-\epsilon)^{3}}{2} +\dfrac{1+\epsilon}{6}\right),\] for all sufficiently large $n$.
We have
\begin{align*}
1+\epsilon -\dfrac{(1-\epsilon)^{3}}{2} +\dfrac{1+\epsilon}{6} &=\dfrac{2}{3}+\epsilon\left(\dfrac{8}{3} + \dfrac{\epsilon(\epsilon-3)}{2} \right) \\
&\leq \dfrac{2}{3} +\dfrac{8 \epsilon}{3}=\dfrac{2}{3} +\xi,
\end{align*}
where the last line follows from $\epsilon(\epsilon-3)<0$ and $\xi= 8\epsilon/3$.
\end{proof}
A sharper bound is obtained in the regime $k\leq (\ln n)^{d}$.
\begin{lem}\label{lem:HMubln}
Given $\xi\in (0,0.5)$ and $d\geq 1$, for $n$ sufficiently large, if $k\leq (\ln n)^{d}$, and $(r,s,\pi)$ is an admissible triple, then
\[|\sym(n,k)[(r,s);\pi]| \leq (1-1/e +\xi)\st(n-1,k-\delta_{r,s}).\] (Here $\delta_{r,s}$ is the Kronecker delta function.)
\end{lem}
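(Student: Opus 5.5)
We follow the proof of Lemma~\ref{lem:HMub}, but use the stronger control in the polylogarithmic regime to truncate the Bonferroni sum at a much larger odd index. We fix $\xi\in(0,0.5)$ and $d\geq 1$, and choose an odd integer $2m+1$. The natural goal is to pick $m$ as a fixed constant, depending only on $\xi$, with
\[
\sum_{z=2m+2}^{\infty}\frac{1}{z!}<\frac{\xi}{4}.
\]
For $n$ sufficiently large we then have $2m+1\leq\sqrt{\ln n}$, and also $2m+1\leq n-2$. The upper Bonferroni inequality \eqref{eq:bonf} gives
\[
|\sym(n,k)[(r,s);\pi]|\leq \sum_{z=1}^{2m+1}(-1)^{z+1}\bb{S}_{z}((r,s);\pi).
\]

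We bound the odd-indexed terms from above and the even-indexed terms from below.

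\begin{itemize}
\item \textbf{Odd terms.} For each odd $z\leq 2m+1$, Lemma~\ref{lem:bonflnub} gives $\bb{S}_z((r,s);\pi)\leq (1+\epsilon)\st(n-1,k-\delta_{r,s})/z!$. Its hypotheses hold because $k\leq(\ln n)^d$ and $z\leq\sqrt{\ln n}$.
\item \textbf{Even terms.} For each even $z\leq 2m$, Lemma~\ref{lem:bonflb} with any fixed $\alpha\in(0,1)$ gives $\bb{S}_z((r,s);\pi)\geq (1-\epsilon)^{z+1}\st(n-1,k-\delta_{r,s})/z!$. Its hypotheses hold because $k\leq(\ln n)^d\leq n^{\alpha}$, $z$ is bounded, and $kz\leq (2m+1)(\ln n)^d=o(n)$.
\end{itemize}

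Combining these bounds, the ratio $|\sym(n,k)[(r,s);\pi]|/\st(n-1,k-\delta_{r,s})$ is at most
\[
\sum_{i=1}^{m+1}\frac{1+\epsilon}{(2i-1)!}-\sum_{i=1}^{m}\frac{(1-\epsilon)^{2i+1}}{(2i)!}.
\]
As $\epsilon\to0$, this converges to the partial sum $\sum_{z=1}^{2m+1}(-1)^{z+1}/z!$. That partial sum is within $\xi/4$ of $\sum_{z\geq1}(-1)^{z+1}/z!=1-1/e$, by the choice of $m$ (the tail of an alternating series is bounded by the tail of $\sum 1/z!$). Since $m$ is fixed, the dependence on $\epsilon$ is explicit, for instance at most $\epsilon\,(e+(2m+1)e)$. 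We can therefore choose $\epsilon$ small enough, depending on $\xi$ and $m$, that the total error is below $\xi/2$. This gives $|\sym(n,k)[(r,s);\pi]|\leq(1-1/e+\xi)\st(n-1,k-\delta_{r,s})$ for all sufficiently large $n$.

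The main obstacle is already handled by Lemma~\ref{lem:bonflnub}: the upper bound on $\bb{S}_z$ has to hold uniformly for all $z$ up to the truncation point. That lemma gives this for $z\leq\sqrt{\ln n}$, which is far more than the constant $2m+1$ we need. The only remaining care is the order of quantifiers. We choose $m$ first from $\xi$, then $\epsilon$ from $\xi$ and $m$, and finally $n$ large enough for both lemmas with this $\epsilon$ and $\alpha$. We must also check that $2m+1\leq\min\{\sqrt{\ln n},\,n-2\}$ and $(\ln n)^d\leq n^{\alpha}$, both of which hold for large $n$.
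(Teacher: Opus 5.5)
Your argument is correct and follows essentially the same route as the paper: the upper Bonferroni bound \eqref{eq:bonf}, Lemma~\ref{lem:bonflnub} for the odd terms, Lemma~\ref{lem:bonflb} for the even terms, and then $\epsilon\to 0$. The only difference is that you truncate at a fixed odd index $2m+1$ chosen from $\xi$ rather than at the largest odd $\ell_n<\sqrt{\ln n}$, so you need Lemma~\ref{lem:bonflnub} only for bounded $z$ and can bound the $\epsilon$-error explicitly instead of passing to a limiting function of $\epsilon$ as the paper does.
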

\begin{proof}
Let $\ell_{n}$ denote the largest odd number less than $\sqrt{\ln n}$. Using \eqref{eq:bonfineq}, we have
\[|\sym(n,k)[(r,s);\pi]| \leq \sum\limits_{j=0}^{(\ell_{n}-1)/2} \bb{S}_{2j+1}((r,s);\pi) - \sum\limits_{j=1}^{(\ell_{n}-1)/2}\bb{S}_{2j}((r,s);\pi).\]

Let $A(x)=1-x +(1+x)\sinh(1)-(1-x)(\cosh(1-x)-1)$ and fix $\xi\in (0,0.5)$. By continuity of $A$, choose $\epsilon \in (0,0.5)$ sufficiently small such that $A(\epsilon)<A(0)+\xi/2=1-1/e+\xi/2$. Lemmas~\ref{lem:bonflb} and \ref{lem:bonflnub} imply that 
\begin{equation}\label{eq:partialsum}
\dfrac{|\sym(n,k)[(r,s);\pi]|}{\st(n-1,k-\delta_{r,s})} \leq \sum\limits_{j=0}^{(\ell_{n}-1)/2} \dfrac{(1+\epsilon)}{(2j+1)!} - (1-\epsilon)\sum\limits_{j=0}^{(\ell_{n}-1)/2}\dfrac{(1-\epsilon)^{2j}}{(2j)!} + (1-\epsilon)
\end{equation}
for all sufficiently large $n$.

As $n\to \infty$, the bound in \eqref{eq:partialsum} converges to $A(\epsilon)$, and therefore, for $n$ sufficiently large, we have
\[\dfrac{|\sym(n,k)[(r,s);\pi]|}{\st(n-1,k-\delta_{r,s})}\leq A(\epsilon)+\xi/2 < 1-1/e+\xi.\]
\end{proof}

\section{\texorpdfstring{$\sym(n,k)$ is spread}{Sym(n,k) is spread}}\label{sec:spread}
In this section, we bound the spreadness of $\sym(n,k)$, which is the quasirandomness parameter underlying the spread approximation method described in \S~\ref{sec:spreadappx}. 

Given a finite set $X$, families $\mcal{A}\subset 2^{X}$ and $\mcal{B}\subset 2^{X}$, and $S\subset X$, we define
\begin{align*}
\mcal{A}[S]:=\{A \colon\ A \in \mcal{A}\ \&\ S\subseteq A\};\
& \mcal{A}[\mcal{B}]:=\bigcup\limits_{S \in \mcal{B}} \mcal{A}[S]; \\
\mcal{A}(S):=\{A\setminus S \colon\ A \in \mcal{A}\ \&\ S\subseteq A\};\ \text{and}\ &
\mcal{A}(\mcal{B}):=\bigcup\limits_{S \in \mcal{B}} \mcal{A}(S).
\end{align*} 

\begin{defi}\label{def:spread}
Given $r>1$, a finite family $\mcal{A} \subset 2^{X}$ is $r$-spread if 
\[|\mcal{A}[S]| \leq r^{-|S|}|\mcal{A}|,\] for all $S\subset X$.
\end{defi}
We also use the following stronger measure.
\begin{defi}\label{def:rtspread}
Given $r>1$ and a non-negative integer $t$, a finite family $\mcal{A} \subset 2^{X}$ is weakly $(r,t)$-spread if there is a $t$-set $T$ such that 
\[|\mcal{A}[S]| \leq r^{-(|S|-|T|)}|\mcal{A}[T]|,\] for any $S\subset X$ with $T\subseteq S$. In other words, $\mcal{A}$ is weakly $(r,t)$-spread if and only if $\mcal{A}(T)$ is $r$-spread for some $T\subset X$ with $|T|=t$.
\end{defi}

The spread approximation method determines an upper bound on the size of a non-centred intersecting family in a ``sufficiently'' spread family of sets. We now determine the spreadness of $\sym(n,k)$. We start by recalling some standard inequalities. 

Using the power series expansion of $e^z$, we have $e^z>z^n/n!$ for every $z>0$ and $n\geq 0$. Taking $z=n$ gives
\begin{equation}\label{eq:factorial}
  n!^{1/n} \geq \dfrac{n}{e}.   
\end{equation}
Next, we note that $(n!)^{1/n}<n$. This implies $n!^{(n-1)/n}=n!/n^{1/n}>(n-1)!$. Thus, for any natural number $x<n$, we have
\begin{align*}
n!^{(n-x)/n} &= \left(n!^{(n-1)/n} \right)^{((n-1)-(x-1))/(n-1)} \\
&> \left((n-1)!\right)^{((n-1)-(x-1))/(n-1)}.
\end{align*}
Iterating this inequality gives 
\begin{equation}\label{eq:factorialpower}
n!^{(n-x)/n} > (n-x)!,    
\end{equation}
for all natural numbers $x<n$. Using \eqref{eq:factorial} and \eqref{eq:factorialpower}, we deduce the following bound on the falling factorial $ n\ff{x}:=\dfrac{n!}{(n-x)!}$
\begin{align} \label{eq:fallingfact}
 n\ff{x} &= \dfrac{n!}{(n-x)!} \notag \\ 
& \geq \dfrac{n!}{n!^{(n-x)/(n)}}\ (\text{using \eqref{eq:factorialpower}}) \notag \\
& \geq n!^{x/n} \notag\\
& \geq \left( \dfrac{n}{e} \right)^{x} \ (\text{using \eqref{eq:factorial}}).    
\end{align}
Lemma~\ref{lem:keyineq} leads to the following characterizations of the `spreadness' of $\sym(n,k)$.
\begin{cor}\label{cor:spread}
For all $n$ sufficiently large, if $k$ is a positive integer with $k \leq n^{0.25}$, then $\sym(n,k)$ is $\dfrac{3(n-1)\ln n}{8e n^{0.25}}$-spread and weakly $\left(\dfrac{3(n-2)\ln(n-1)}{8e (n-1)^{0.25}},\ 1\right)$-spread.
\end{cor}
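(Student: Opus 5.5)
We have to bound $|\sym(n,k)[S]|/|\sym(n,k)|$ for every $S\subset[n]\times[n]$. If $S$ is not a partial permutation, or is one with $\cy(S)>k$, then $\sym(n,k)[S]=\emptyset$ and there is nothing to prove. If $S$ is a full permutation, then $|\sym(n,k)[S]|\le 1$, and the bound follows from a crude lower estimate on $\st(n,k)$. So the main case is a proper partial permutation $S$ with $|S|=x$ and $\cy(S)=y\le \min\{k,x\}$. By Lemma~\ref{lem:size}, $|\sym(n,k)[S]|=\st(n-x,k-y)$.

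Apply Lemma~\ref{lem:keyineq} with $\beta=1/4$, $m=n$, $l=k$, $w=0$, and $x,y$ as above; the hypotheses hold for large $n$ since $k\le n^{0.25}$ and $y\le k$. This gives
\[\st(n,k)\ge (n-1)\ff{x}\left(\frac{3\ln n}{8n^{0.25}}\right)^{y}\st(n-x,k-y).\]
Since $y\le x$ and $\frac{3\ln n}{8n^{0.25}}<1$ for large $n$, we may replace the exponent $y$ by $x$. By \eqref{eq:fallingfact}, $(n-1)\ff{x}\ge ((n-1)/e)^{x}$. Together,
\[\frac{|\sym(n,k)[S]|}{|\sym(n,k)|}\le\left(\frac{8e\,n^{0.25}}{3(n-1)\ln n}\right)^{x},\]
which is the claimed spreadness $r=\frac{3(n-1)\ln n}{8en^{0.25}}$.

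The full-permutation case $x=n$ needs a separate check that $\st(n,k)\ge r^{n}$. Lemma~\ref{lem:keyineq} requires $x<m-w$, so it cannot be used directly. Instead, apply it with $x=n-1$, or note $\st(n,k)\ge \st(n-1,k-1)$ and iterate. A simpler route is
\[\st(n,k)\ge \binom{n}{k-1}(n-k)!\ \ge\ (n-k)!,\]
counting permutations with $k-1$ fixed points and one long cycle. This beats $r^{n}\approx (n^{0.75}\ln n)^{n}$ only if we are careful. I would instead use Lemma~\ref{lem:keyineq} with $x=n-k$, $y=k-1$, so the remainder is $\st(k,1)=(k-1)!\ge1$; the same computation as above then gives $\st(n,k)\ge r^{n-1}$ up to constants. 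The missing factor $r$ is absorbed because $\sym(n,k)[S]$ for a full permutation $S$ has size $1\le \st(k,1)$ once $k\geq 2$. The case $k=1$ works directly, since $\st(n,1)=(n-1)!\ge ((n-1)/e)^{n-1}$ and the extra factor is handled similarly. This edge case is the one fiddly point, and I expect the constants to have enough slack.

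For weak $(r',1)$-spreadness, take $T=\{(1,2)\}$, a non-fixed-point pair. Then $\sym(n,k)(T)$ corresponds bijectively, by contracting the arc $1\to2$ as in the proof of Lemma~\ref{lem:size}, to $\sym(n-1,k)$ with sets shifted by one. For $S\supseteq T$, this gives $|\sym(n,k)[S]|=\st(n-|S|,k-\cy(S))$ and $|\sym(n,k)[T]|=\st(n-1,k)$. Repeat the above argument with $m=n-1$, $w=0$, $x'=|S|-1$, $y=\cy(S)$. Since $(1,2)$ creates no cycle by itself, we have $y\le x'$. This yields the bound with $n-1$ in place of $n$, that is, $r'=\frac{3(n-2)\ln(n-1)}{8e(n-1)^{0.25}}$, using $k\le n^{0.25}$. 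The only subtlety here is the inequality $\cy(S)\le|S|-1$, which holds because the cycles of $\mathcal L_S$ use arcs other than $(1,2)$ unless a cycle passes through $(1,2)$. Such a cycle has length at least $2$, so the count still holds.
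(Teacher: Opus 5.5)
Your $r$-spreadness argument is the paper's own: Lemma~\ref{lem:size}, then Lemma~\ref{lem:keyineq} with $\beta=1/4$, $m=n$, $w=0$, then $\cy(S)\le |S|$ and \eqref{eq:fallingfact}. Two steps need repair.

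\textbf{The weak spreadness step does not follow as written.} With $T=\{(1,2)\}$ you must bound $\st(n-1,k)/\st(n-|S|,k-\cy(S))$. Invoking Lemma~\ref{lem:keyineq} with $m=n-1$, $l=k$, $\beta=1/4$ requires $k\le (n-1)^{0.25}$, and $k\le n^{0.25}$ does not give this. Take $n=N^{4}$ and $k=N$: then $k>(n-1)^{0.25}$. This happens for infinitely many $n$, so ``$n$ sufficiently large'' does not help. Inside the lemma, $l_{\max}=\lfloor (n-1)^{0.25}\rfloor=N-1<k$, and the log-concavity step $R(m,l)\ge R(m,l_{\max})$ then fails.

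This is exactly why the paper takes a loop $T=\{(n,n)\}$. Then $\sym(n,k)(T)=\sym(n-1,k-1)$, and $k-1\le n^{0.25}-1\le (n-1)^{0.25}$, so the first part applies verbatim to $(n-1,k-1)$. If you prefer to keep $T=\{(1,2)\}$ (which has the merit of being non-degenerate when $k=1$), bypass Lemma~\ref{lem:keyineq}. By \eqref{eq:log-concave}, \eqref{eq:pitman} and Lemma~\ref{lem:thetabound},
\[
R(n-1,k-j)\ge R(n-1,k)>\frac{\ln((n-1)/k)}{k}\ge \frac{(3/4-o(1))\ln n}{n^{0.25}}.
\]
This beats $\frac{3\ln(n-1)}{8(n-1)^{0.25}}$ by nearly a factor $2$, and together with \eqref{eq:changenumineq} it yields the stated $r'$.

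\textbf{The full-permutation case: right concern, wrong justification.} You are right that this case must be checked; the paper's proof passes over it, since Lemma~\ref{lem:keyineq} needs $x<m-w$. But ``the missing factor $r$ is absorbed because $1\le \st(k,1)$'' is not an argument. The factor $\st(k,1)=(k-1)!$ was already consumed in turning $(n-1)\ff{n-k}\st(k,1)$ into $(n-1)!$.

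The real source of room is the slack you discarded when raising the exponent $k-1$ to $n-1$. Write $\rho=\frac{3\ln n}{8n^{0.25}}$, so $r=\frac{n-1}{e}\rho$. Your computation gives $\st(n,k)\ge (n-1)!\,\rho^{k-1}$, hence
\[
\frac{\st(n,k)}{r^{n}}\ge \frac{e}{n-1}\,\rho^{-(n-k+1)},
\]
which is enormous because $n-k+1\ge n-n^{0.25}$. Even the bound $\st(n,k)\ge (n-k)!$ that you set aside suffices. Indeed $\ln (n-k)!\ge n\ln n-O(n+n^{0.25}\ln n)$, while $\ln r^{n}\le 0.75\,n\ln n+n\ln\ln n$. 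The same remark covers full permutations $S\supseteq T$ in the weak part, where one needs $\st(n-1,k)\ge (r')^{\,n-1}$.
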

\begin{proof}
For any partial permutation $X$ of size $x$ with $\cy(X)\leq k$, using Lemma~\ref{lem:size}, we have 
\begin{align*}
\dfrac{|\sym(n,k)[X]|}{|\sym(n,k)|} &= \dfrac{\st(n-x, k-\cy(X))}{\st(n,k)}.
\end{align*} 
For $n$ sufficiently large, using Lemma~\ref{lem:keyineq}, we have
\[\dfrac{|\sym(n,k)[X]|}{|\sym(n,k)|}\leq \dfrac{1}{(n-1)\ff{x}} \times \left(\dfrac{8n^{0.25}}{3\ln n}\right)^{\cy(X)}.\] 
By \eqref{eq:fallingfact}, together with $\cy(X)\leq x$ and $\frac{8n^{0.25}}{3\ln n}>1$ for all sufficiently large $n$,
\begin{align*}
\dfrac{|\sym(n,k)[X]|}{|\sym(n,k)|} & \leq \left(\dfrac{e}{(n-1)} \right)^{x} \left(\dfrac{8n^{0.25}}{3\ln n}\right)^{\cy(X)} \\
&\leq \left(\dfrac{e}{(n-1)} \right)^{x} \left(\dfrac{8n^{0.25}}{3\ln n}\right)^{x}= \left(\dfrac{3(n-1)\ln n}{8e n^{0.25}} \right)^{-x}.
\end{align*}
Thus $\sym(n,k)$ is $\frac{3(n-1)\ln n}{8e n^{0.25}}$-spread.

Finally, since $n^{0.25}-(n-1)^{0.25} \leq 1$, we have $k-1\leq (n-1)^{0.25}$. Moreover, $\sym(n,k)(\{(n,n)\})=\sym(n-1,k-1)$. Applying the first part of the proof with $n-1$ and $k-1$ shows that
$\sym(n,k)(\{(n,n)\})$ is $\dfrac{3(n-2)\ln(n-1)}{8e (n-1)^{0.25}}$ spread. Hence, by Definition~\ref{def:rtspread}, $\sym(n,k)$ is weakly $\left(\dfrac{3(n-2)\ln(n-1)}{8e (n-1)^{0.25}},\ 1\right)$-spread.
\end{proof}
\section{The spread approximation method}\label{sec:spreadappx}
In this section, we apply the {\it spread approximation} method which was discovered by Kupavskii and Zakharov~\cite{kupavskii2024spread} and later refined by Kupavskii~\cite{kupavskii2026erd}.

We make use of the following specialization of the spread approximation theorem  of Kupavskii~\cite[Theorem 13]{kupavskii2026erd}.

\begin{thm}[Kupavskii--Zakharov; Kupavskii \cite{kupavskii2024spread, kupavskii2026erd}]\label{thm:spreadapproximation}
Let $m,\ell$ be positive integers. Let $q,r,s$ be positive real numbers such that $r>s>2^{12}\log_{2}(2\ell)$ and $s\geq 2q$. If $\mcal{A} \subset 2^{[m]}$ is an $r$-spread family and $\mcal{F} \subset \binom{[m]}{\leq \ell}$ is intersecting, then there is an intersecting family $\mcal{S} \subset \binom{[m]}{\leq q}$ (called a spread approximation of $\mcal{F}$) and a `remainder' $\mcal{F}'\subset \mcal{F}$ such that
\begin{enumerate}
\item $\mcal{F}\setminus \mcal{F}' \subseteq \mcal{A}[\mcal{S}]$, and
\item $|\mcal{F}'| \leq \left(\dfrac{s}{r} \right)^{q+1}|\mcal{A}|$.
\end{enumerate}
\end{thm}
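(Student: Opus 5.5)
The plan is to build the spread approximation $\mcal{S}$ by greedily peeling off the most ``concentrated'' pieces of $\mcal{F}$, and then to show that $\mcal{S}$ is intersecting using a disjointness lemma for spread families. The peeling step is routine bookkeeping. The real work is the intersecting property of $\mcal{S}$, where the hypothesis $s>2^{12}\log_{2}(2\ell)$ enters.

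\textbf{Construction.} Set $\mcal{F}_{0}:=\mcal{F}$ and $\mcal{S}_{0}:=\emptyset$, and iterate as follows. Given $\mcal{F}_{i}\neq\emptyset$, choose a set $T_{i}$, possibly empty, that maximises $s^{|T|}|\mcal{F}_{i}[T]|$.
\begin{itemize}
\item If $|T_{i}|\leq q$, add $T_{i}$ to $\mcal{S}$, put $\mcal{F}_{i+1}:=\mcal{F}_{i}\setminus \mcal{F}_{i}[T_{i}]$, and continue.
\item If $|T_{i}|>q$, stop and set $\mcal{F}':=\mcal{F}_{i}$.
\end{itemize}

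\textbf{Properties (1) and (2).} Property (1) holds by construction, since every removed set contains some $T_{i}\in\mcal{S}$ and hence lies in $\mcal{A}[\mcal{S}]$. For (2), maximality at the terminal step, together with $\mcal{F}_{i}[T_{i}]\subseteq\mcal{A}[T_{i}]$ and the $r$-spreadness of $\mcal{A}$, gives
\[|\mcal{F}'|\leq s^{|T_{i}|}|\mcal{F}_{i}[T_{i}]|\leq s^{|T_{i}|}r^{-|T_{i}|}|\mcal{A}|\leq (s/r)^{q+1}|\mcal{A}|,\]
where the last step uses $s<r$ and $|T_{i}|\geq q+1$.

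Maximality also gives a local spreadness property at each non-terminal step. For every $S$ disjoint from $T_{i}$, comparing $T_{i}\cup S$ with $T_{i}$ yields $|\mcal{F}_{i}(T_{i})[S]|\leq s^{-|S|}|\mcal{F}_{i}(T_{i})|$. Thus $\mcal{G}_{i}:=\mcal{F}_{i}(T_{i})$ is an $s$-spread family of sets of size at most $\ell$.

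\textbf{Intersecting property of $\mcal{S}$.} Suppose $T_{i},T_{j}\in\mcal{S}$ are disjoint. I want $G_{1}\in\mcal{G}_{i}$ and $G_{2}\in\mcal{G}_{j}$ such that $G_{1}\cup T_{i}$ and $G_{2}\cup T_{j}$ are disjoint. Both sets lie in $\mcal{F}$, so this contradicts the assumption that $\mcal{F}$ is intersecting.

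First I discard from $\mcal{G}_{i}$ all sets meeting $T_{j}$. By $s$-spreadness applied to singletons, this removes at most a $|T_{j}|/s\leq q/s\leq 1/2$ fraction of $\mcal{G}_{i}$, using $s\geq 2q$. I do the same for $\mcal{G}_{j}$ with respect to $T_{i}$. The trimmed families $\mcal{G}'_{i},\mcal{G}'_{j}$ are then $(s/2)$-spread, since their sizes have at least halved while their restricted counts have not grown.

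\textbf{Main obstacle: the disjointness lemma.} It remains to show that two $(s/2)$-spread families of sets of size at most $\ell$ contain a cross-disjoint pair once $s>2^{12}\log_{2}(2\ell)$. I would colour the ground set uniformly at random into two classes $U$ and $W$, each element independently with probability $1/2$. Then I would apply the Alweiss--Lovett--Wu--Zhang spread lemma in Tao's or Rao's form. This lemma says that a $\kappa$-spread family of sets of size at most $\ell$, with $\kappa\geq C\log_{2}(2\ell)/p$, contains a member inside a $p$-random set with probability greater than $1/2$; it is proved by iterating $O(\log \ell)$ rounds of random restriction. Taking $p=1/2$, each event ``$\mcal{G}'_{i}$ has a member inside $U$'' and ``$\mcal{G}'_{j}$ has a member inside $W$'' has probability greater than $1/2$. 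Hence both occur simultaneously with positive probability, which produces the required disjoint pair. Checking that the constant $2^{12}$ suffices for this version of the spread lemma is where I expect most of the technical effort to lie.
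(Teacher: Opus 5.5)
The paper does not prove this statement; it imports it from \cite[Theorem~13]{kupavskii2026erd}. Your argument is essentially the Kupavskii--Zakharov proof of that theorem, and it is correct: greedy peeling so that each link $\mathcal{F}_i(T_i)$ is $s$-spread, the terminal bound $s^{|T_i|}|\mathcal{F}_i[T_i]|\le (s/r)^{|T_i|}|\mathcal{A}|$, trimming via $s\ge 2q$ to get $(s/2)$-spread families, and the Alweiss--Lovett--Wu--Zhang/Tao spread lemma with a random two-colouring to produce a cross-disjoint pair, which is where $s>2^{12}\log_2(2\ell)$ is used.

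Two small caveats. First, the inclusion $\mathcal{F}_i[T_i]\subseteq\mathcal{A}[T_i]$ and property (1) need $\mathcal{F}\subseteq\mathcal{A}$; the printed statement omits this hypothesis, though the paper's application satisfies it. Second, the step $|T_i|\ge q+1$ needs $q$ to be an integer; for the real $q$ of the printed statement your argument only gives the exponent $\lfloor q\rfloor+1$.
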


We now apply the above to $\sym(n,k)$.
\begin{lem}\label{lem:spreadapproxsymnk}
For $n$ sufficiently large and $k\leq n^{0.25}$, if $\mcal{G}\subset \sym(n,k)$ is an intersecting family, then there is an intersecting family $\mcal{S}\subset \binom{[n]\times [n]}{\leq 4\log_{2}(n)}$ of partial permutations (the spread approximation of $\mcal{G}$) and a `remainder' $\mcal{G}'\subset \mcal{G}$ such that 
\begin{enumerate}
\item $\mcal{G} \setminus \mcal{G}' \subseteq \sym(n,k)[\mcal{S}]$, and
\item $|\mcal{G}'|\leq \dfrac{\max\{\st(n-1,k-1),\ \st(n-1,k)\} \ln n}{n^{3}}$.
\end{enumerate}
\end{lem}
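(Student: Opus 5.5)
We apply Theorem~\ref{thm:spreadapproximation} with $\mcal{A}=\sym(n,k)$, viewed as a family of $n$-subsets of the ground set $[n]\times[n]$ (so $m=n^{2}$ and $\ell=n$), and with $\mcal{F}=\mcal{G}$. By Corollary~\ref{cor:spread}, for $n$ large and $k\leq n^{0.25}$, $\sym(n,k)$ is $r$-spread with
\[
r=\frac{3(n-1)\ln n}{8en^{0.25}}=\Theta\bigl(n^{0.75}\ln n\bigr).
\]
We take $q=4\log_{2} n$. For $s$ we choose something comfortably above both $2^{12}\log_{2}(2n)$ and $2q$, say $s=2^{13}\log_{2}(2n)$; this is $\Theta(\log n)$.

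With these choices the hypotheses are satisfied for large $n$:
\begin{itemize}
\item $s\geq 2q$, since $2^{13}\log_2(2n)\geq 8\log_2 n$.
\item $s>2^{12}\log_2(2\ell)=2^{12}\log_2(2n)$.
\item $r>s$, because $r$ grows polynomially while $s$ is only logarithmic.
\end{itemize}

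The theorem then produces an intersecting family $\mcal{S}\subset\binom{[n]\times[n]}{\leq q}$ and a remainder $\mcal{G}'\subset\mcal{G}$ with $\mcal{G}\setminus\mcal{G}'\subseteq\sym(n,k)[\mcal{S}]$. We may discard from $\mcal{S}$ every set that is not a partial permutation, or that has more than $k$ cycles. Such a set $S$ has $\sym(n,k)[S]=\emptyset$, so removing it does not change $\sym(n,k)[\mcal{S}]$, and a subfamily of an intersecting family is still intersecting. This gives conclusion (1).

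For conclusion (2), the theorem gives
\[
|\mcal{G}'|\leq (s/r)^{q+1}\,|\sym(n,k)|=(s/r)^{q+1}\,\st(n,k).
\]
Since $s/r=O\bigl(n^{-0.75}\bigr)$, we have $(s/r)\leq n^{-1/2}$ for large $n$. Hence
\[
(s/r)^{q+1}\leq n^{-(q+1)/2}=n^{-2\log_2 n-1/2},
\]
which is superpolynomially small.

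Next we compare $\st(n,k)$ with $M:=\max\{\st(n-1,k-1),\st(n-1,k)\}$. For $k\geq 2$, Lemma~\ref{lem:keyineq2} (with $\alpha=1/4$ and a fixed $\delta$, say $\delta=1/4$) gives
\[
\st(n,k)\leq \frac{(n-1)\ln n}{(1-\delta)^{2}}\,M\leq 2n\ln n\, M.
\]
For $k=1$ we have directly $\st(n,1)=(n-1)\,\st(n-1,1)$. In either case,
\[
|\mcal{G}'|\leq 2n\ln n\cdot n^{-2\log_2 n}\,M\leq \frac{M\ln n}{n^{3}}
\]
for $n$ sufficiently large, since $n^{-2\log_2 n}\leq n^{-4}$ once $\log_2 n\geq 2$.

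The main point needing care is verifying the numeric hypotheses of Theorem~\ref{thm:spreadapproximation}: the constant $2^{12}$ forces $s$ to be a large multiple of $\log n$, so we must check that $r$ dominates it and that $(s/r)^{q+1}$ is still small. Because $r$ is polynomial in $n$ while $s$ is only $O(\log n)$, there is ample room.
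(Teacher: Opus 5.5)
Your proposal is correct and is essentially the paper's proof. It makes the same application of Theorem~\ref{thm:spreadapproximation} to $\mcal{A}=\sym(n,k)$, with the same $r$ and $q=4\log_{2}n$, and then uses Lemma~\ref{lem:keyineq2} (plus the direct computation for $k=1$) to compare $\st(n,k)$ with $M=\max\{\st(n-1,k-1),\st(n-1,k)\}$. The differences are cosmetic: the paper takes $s=r/2$ so that $(s/r)^{q+1}=1/(2n^{4})$ exactly, while your $s=\Theta(\log n)$ makes the remainder superpolynomially small. One small fix in your last step: keep the factor $n^{-1/2}$ (or use $n^{-2\log_{2}n}\leq n^{-5}$), since $n^{-2\log_{2}n}\leq n^{-4}$ alone only yields $2M\ln n/n^{3}$.
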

\begin{proof}
We set $\mcal{A}=\sym(n,k) \subset \binom{[n]\times [n]}{n}$, $r=\dfrac{3(n-1)\ln n}{8e n^{0.25}}$, $s=r/2$, and $q=4\log_{2}(n)$. For $n$ sufficiently large, by Corollary~\ref{cor:spread}, $\sym(n,k)$ is $r$-spread. For all $n$ sufficiently large, we have $s>2^{12}\log_{2}(n)$ and $s>2q$. By Theorem~\ref{thm:spreadapproximation}, there exists an intersecting family $\mcal{S}\subset \binom{[n]\times [n]}{\leq 4\log_{2}(n)}$ of partial permutations and a `remainder' $\mcal{G}'\subset \mcal{G}$ such that 
\begin{enumerate}
\item $\mcal{G} \setminus \mcal{G}' \subseteq \sym(n,k)[\mcal{S}]$, and
\item $|\mcal{G}'|\leq \dfrac{|\sym(n,k)|}{2n^{4}}$.
\end{enumerate}  
To finish the proof, it suffices to show that
\begin{equation}\label{eq:equvi}
\dfrac{\st(n,k)}{2n^{4}}\leq \dfrac{\max\{\st(n-1,k),\ \st(n-1,k-1)\}\ln n}{n^{3}}.
\end{equation} 
For $k\neq 1 $, \eqref{eq:equvi} follows by setting $\delta=\dfrac{\sqrt{2}-1}{\sqrt{2}}$ in Lemma~\ref{lem:keyineq2}. 

In the case $k=1$, since $\st(n,1)=(n-1)!$ and $\st(n,0)=0$ , \eqref{eq:equvi} simplifies to
\[\dfrac{(n-1)!}{2n^{4}} \leq \dfrac{(n-2)!\ln n}{n^{3}},\] which is true for all sufficiently large $n$. 
\end{proof}

We recall that an intersecting family $\mcal{S}$ of partial permutations is centred if there exist $i,j \in [n]$ such that $(i,j)\in X$ for all $X \in \mcal{S}$. If $\mcal{S}$ is centred, $\sym(n,k)[\mcal{S}]\subseteq \sym(n,k)[(i,j)]$ for some $i,j$; in particular $\sym(n,k)[\mcal{S}]$ is centred. The following specialization of a result of Kupavskii \cite[Theorem 14]{kupavskii2026erd} will be used to bound the size of $\sym(n,k)[\mcal{S}]$ when $\mcal{S}$ is a non-centred intersecting family of partial permutations.

\begin{thm}[Kupavskii \cite{kupavskii2026erd}]\label{thm:peelinggeneral}
Let $m,q\geq 1$ be integers, $\epsilon\in (0,1]$ and $w>1$ such that $\epsilon w>24 q$. If $\mcal{A}\subset 2^{[m]}$ is a weakly $(w,1)$-spread family and $\mcal{S} \subset \binom{[m]}{\leq q}$ is a non-centred intersecting family, then 
$|\mcal{A}[S]| \leq \epsilon \max\limits_{x \in [m]}|\mcal{A}[x]|$.
\end{thm}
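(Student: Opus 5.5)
The plan is a ``peeling'' argument: every member of $\mcal{A}[\mcal{S}]$ is forced to contain a pair of elements, and pairs are rare compared with single elements because $\mcal{A}$ is weakly spread. (I read $\mcal{A}[S]$ in the statement as $\mcal{A}[\mcal{S}]$.) Throughout, let $T=\{t\}$ be the $1$-set witnessing that $\mcal{A}$ is weakly $(w,1)$-spread, and set $M:=\max_{x\in[m]}|\mcal{A}[x]|$.

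\emph{Step 1 (pairs).} Fix $S_0\in\mcal{S}$, so $|S_0|\leq q$. Since $\mcal{S}$ is not centred, for each $x\in S_0$ there is some $S_x\in\mcal{S}$ with $x\notin S_x$. Now take $A\in\mcal{A}[\mcal{S}]$, so $A\supseteq S$ for some $S\in\mcal{S}$.
\begin{itemize}
\item Because $\mcal{S}$ is intersecting, $S\cap S_0\neq\emptyset$, so $A$ contains some $x\in S_0$.
\item Likewise $S\cap S_x\neq\emptyset$, so $A$ contains some $y\in S_x$, and $y\neq x$ since $x\notin S_x$.
\end{itemize}
Hence
\[
\mcal{A}[\mcal{S}]\subseteq\bigcup_{x\in S_0}\ \bigcup_{y\in S_x}\mcal{A}[\{x,y\}],
\qquad\text{so}\qquad
|\mcal{A}[\mcal{S}]|\leq\sum_{x\in S_0}\sum_{y\in S_x}|\mcal{A}[\{x,y\}]|.
\]
This is a sum of at most $q^{2}$ terms.

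\emph{Step 2 (bounding a pair).} I want $|\mcal{A}[\{x,y\}]|\leq M/w$, possibly up to a constant.
\begin{itemize}
\item If $t\in\{x,y\}$, this is exactly Definition~\ref{def:rtspread} applied to $S=\{x,y\}\supseteq T$: $|\mcal{A}[\{x,y\}]|\leq w^{-1}|\mcal{A}[t]|\leq M/w$.
\item If $t\notin\{x,y\}$, split $\mcal{A}[\{x,y\}]$ into the members containing $t$ and those avoiding $t$. The first part is at most $w^{-2}|\mcal{A}[t]|$. For the second part, weak spreadness alone gives no control. Here I would use the convention in Kupavskii's setting, that weak $(w,1)$-spreadness is applied with respect to the maximizer $\max_x|\mcal{A}[x]|$, i.e.\ every $\mcal{A}(\{x\})$ is $w$-spread. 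In the application $\mcal{A}=\sym(n,k)$, the symmetry $\sigma\mapsto\pi\sigma\pi^{-1}$ moves $(n,n)$ to any $(i,i)$, and Corollary~\ref{cor:spread} covers the off-diagonal stars similarly, so this holds there.
\end{itemize}
Combining the two steps gives $|\mcal{A}[\mcal{S}]|\leq (q^{2}/w)\,M$ up to constants.

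\emph{Step 3 (main obstacle).} The bound from Steps 1--2 is quadratic in $q$, while the hypothesis $\epsilon w>24q$ is only linear. The loss comes from union-bounding over all $y\in S_x$. My first attempt to remove it is an iterative peeling:
\begin{itemize}
\item Pick $x\in S_0$ and split $\mcal{A}[\mcal{S}]$ into members avoiding $x$ and members containing $x$.
\item Members avoiding $x$ lie in $\mcal{A}[\mcal{S}_{\bar x}]$, where $\mcal{S}_{\bar x}$ is the subfamily of sets not containing $x$. This subfamily is still intersecting and has strictly fewer sets through each element of $S_0$, so recurse on it.
\item For members containing $x$, charge only the terms $\mcal{A}[\{x,y\}]$ with $y$ in a single witness set, and bound their total by $(2q/w)M$ via spreadness of $\mcal{A}(\{x\})$.
\end{itemize}
To stop the recursion after a bounded number of rounds, I would follow Kupavskii's argument of passing to a minimal non-centred subfamily, and use that at most $|S_0|\leq q$ centres ever need to be peeled. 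Summing a geometric-type series of such contributions should give a constant like $24q/w<\epsilon$, which is how the constant $24$ would appear. If the refinement fails, the quadratic bound still suffices for the application: there $q=4\log_2 n$ and $w=\Theta(n^{0.75}\ln n)$, so $q^{2}/w=o(1)$.
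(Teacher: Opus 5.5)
The paper gives no proof of this statement (it is quoted from Kupavskii), so your argument has to stand on its own, and as written it does not prove the statement. First, the strengthened hypothesis you invoke in Step~2 is a necessity, not a convention. With Definition~\ref{def:rtspread} as printed (a single witness $T$), the statement is false. Take $\mathcal{A}=\{\{t,y\}\colon y\in Y\}\cup\{\{a,b,c,y\}\colon y\in Y\}$ with $|Y|\geq w$, and $\mathcal{S}=\{\{a,b\},\{b,c\},\{a,c\}\}$. Then $\mathcal{A}$ is weakly $(w,1)$-spread with $T=\{t\}$, yet $|\mathcal{A}[\mathcal{S}]|=|Y|=\max_x|\mathcal{A}[x]|$, contradicting the conclusion for every $\epsilon<1$. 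So one must assume something like $|\mathcal{A}[B]|\leq w^{1-|B|}M$ for every nonempty $B$, where $M=\max_x|\mathcal{A}[x]|$. This follows from your reading and does hold for $\sym(n,k)$. For off-diagonal stars, though, it needs its own check via Lemmas~\ref{lem:size} and~\ref{lem:keyineq} and the bound $\cy(B)\leq|B|-1$, since Corollary~\ref{cor:spread} only handles $(n,n)$.

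The genuine gap is Step~3. Steps~1--2 give $|\mathcal{A}[\mathcal{S}]|\leq q^{2}M/w$, which needs $\epsilon w\geq q^{2}$ rather than $\epsilon w>24q$, and your refinement does not lower the exponent. In each round, the members containing the peeled point $x$ are charged to $\sum_{y\in S_x}|\mathcal{A}[\{x,y\}]|$, which can be of order $qM/w$. Up to $|S_0|$ rounds may occur, so the total is again of order $q^{2}M/w$; nothing makes the contributions decay geometrically. Passing to a minimal non-centred subfamily $\mathcal{S}'\subseteq\mathcal{S}$ goes the wrong way, since $\mathcal{A}[\mathcal{S}']\subseteq\mathcal{A}[\mathcal{S}]$. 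Your fallback is correct and suffices for Lemma~\ref{lem:peelingsymnk}, where $q^{2}/w=o(1)$, but it proves a weaker theorem.

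The missing idea is to peel the family $\mathcal{S}$ rather than points of the ground set. Call $X$ \emph{heavy} at level $j$ if more than $q^{j-|X|}$ members of $\mathcal{S}$ of size $j$ contain $X$. An inclusion-maximal heavy $X$ meets every $S'\in\mathcal{S}$. Otherwise each of those members contains $X\cup\{y\}$ for some $y\in S'$, and maximality bounds their number by $|S'|\,q^{j-|X|-1}\leq q^{j-|X|}$, a contradiction. Hence replacing all supersets of $X$ in $\mathcal{S}$ by $X$ keeps $\mathcal{S}$ intersecting, keeps it non-centred (each old member contains a new one), keeps all sets of size at most $q$, and only enlarges $\mathcal{A}[\mathcal{S}]$. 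The process terminates, because each step replaces at least two members by one smaller set. Once no heavy set remains, every point lies in at most $q^{j-1}$ members of size $j$. If $S_1$ is a smallest member, of size $s\geq 2$, there are at most $s\,q^{j-1}$ members of size $j$. Therefore $|\mathcal{A}[\mathcal{S}]|\leq\sum_{j\geq s}s\,q^{j-1}w^{1-j}M\leq 4qM/w<\epsilon M$, using $w\geq 2q$ and $\epsilon w>24q$. The saving over your Step~1 comes from charging each $A$ to an actual member of the peeled family instead of to one of up to $q^{2}$ pairs.
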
 

We now apply the above to $\sym(n,k)$.

\begin{lem}\label{lem:peelingsymnk}
For $n$ sufficiently large and $k\leq n^{0.25}$, if $\mcal{S}\subset \binom{[n]\times [n]}{\leq 4\log_{2}(n)}$  is a non-centred intersecting family of partial permutations, then  
\[|\sym(n,k)[\mcal{S}]| \leq \frac{\max\{\st(n-1,k-1),\ \st(n-1,k)\}}{2}.\]
\end{lem}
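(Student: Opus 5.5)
This is a direct application of Theorem~\ref{thm:peelinggeneral}. Take $\mcal{A}=\sym(n,k)\subset 2^{[n]\times[n]}$ (so $m=n^{2}$), $q=\lceil 4\log_{2}(n)\rceil$ and $\epsilon=1/2$. For $w$ we use the weak spreadness from Corollary~\ref{cor:spread}:
\[w=\dfrac{3(n-2)\ln(n-1)}{8e (n-1)^{0.25}}.\]
By that corollary, for $n$ sufficiently large and $k\leq n^{0.25}$, $\sym(n,k)$ is weakly $(w,1)$-spread. The hypothesis of Theorem~\ref{thm:peelinggeneral} then reduces to $\epsilon w>24q$, that is,
\[\dfrac{3(n-2)\ln(n-1)}{16e(n-1)^{0.25}}>24\lceil 4\log_{2} n\rceil .\]
This holds for all large $n$, because the left side grows like $n^{0.75}\ln n$ while the right side is $O(\log n)$. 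Also, $\mcal{S}\subset\binom{[n]\times[n]}{\leq q}$ is a non-centred intersecting family by assumption.

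The theorem then gives
\[|\sym(n,k)[\mcal{S}]|\leq \tfrac12\max_{(i,j)\in[n]\times[n]}|\sym(n,k)[(i,j)]|.\]
By Lemma~\ref{lem:size}, $|\sym(n,k)[(i,j)]|=\st(n-1,k-\delta_{i,j})$. So the maximum over all points is exactly $\max\{\st(n-1,k),\st(n-1,k-1)\}$, which is the claimed bound.

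Small cases cause no trouble. When $k=1$ the diagonal stars are empty ($\st(n-1,0)=0$), and the maximum is still given by the formula above.

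The only point needing care is checking that the weak $(w,1)$-spreadness in Corollary~\ref{cor:spread} matches Definition~\ref{def:rtspread} as used in Theorem~\ref{thm:peelinggeneral}. The corollary takes the center $T=\{(n,n)\}$, and the condition $k-1\leq(n-1)^{0.25}$ was already verified there. Nothing else is an obstacle: the argument is a parameter check plus the evaluation of the maximal star size.
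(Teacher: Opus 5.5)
Your proposal is correct and follows the paper's proof essentially verbatim. It uses the same choices $\mcal{A}=\sym(n,k)$, $w=\frac{3(n-2)\ln(n-1)}{8e(n-1)^{0.25}}$, $\epsilon=1/2$, $q\approx 4\log_2 n$, the same appeal to Corollary~\ref{cor:spread} and Theorem~\ref{thm:peelinggeneral}, and the same use of Lemma~\ref{lem:size} to identify the largest star as $\max\{\st(n-1,k),\st(n-1,k-1)\}$; your rounding $q=\lceil 4\log_2 n\rceil$ to make $q$ an integer is a harmless tidy-up of the paper's choice.
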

\begin{proof}
We set $\mcal{A}=\sym(n,k)$,  $w=\dfrac{3(n-2)\ln(n-1)}{8e (n-1)^{0.25}}$, $q=4\log_{2}(n)$, and $\epsilon=1/2$. For $n$ sufficiently large, we have $\epsilon w>24 q$, and by Corollary~\ref{cor:spread}, $\sym(n,k)$ is weakly $(w,1)$-spread. Thus, by Theorem~\ref{thm:peelinggeneral}, we have

\[|\sym(n,k)[\mcal{S}]| \leq \frac{\max\limits_{i,j\in [n] }|\sym(n,k)[(i,j)]|}{2}.\] We conclude the proof by using Lemma~\ref{lem:size} to observe \[|\sym(n,k)[(i,j)]|= \st(n-1,k-\delta_{i,j})\] for all $i,j\in [n].$
\end{proof}
\section{Proof of main results}\label{sec:proof}
Our main results Theorems~\ref{thm:stability} and \ref{thm:stabilityln} bound the size of non-centred intersecting families in $\sym(n,k)$.

\subsection*{Proof of Theorem~\ref{thm:stability}}
Let $n,k$ be integers such that $k\leq n^{0.25}$. Fix $\xi >0$ and let $\mcal{G}\subset \sym(n,k)$ be a non-centred intersecting family of the maximum possible size. For $n$ sufficiently large and $k\leq n^{0.25}$, by Lemma~\ref{lem:spreadapproxsymnk}, there is a spread approximation--remainder pair $\mcal{S}$ and $\mcal{G}'$; that is, there is an intersecting family $\mcal{S}$ of partial permutations and $\mcal{G}'\subset \mcal{G}$ such that:
\begin{subequations}\label{eq:sppx}
\begin{align}
\mcal{G} \setminus \mcal{G}' &\subseteq \sym(n,k)[\mcal{S}] \\
|\mcal{G}'|&\leq \dfrac{\max\{\st(n-1,k-1),\ \st(n-1,k)\} \ln n}{n^{3}}.
\end{align}
\end{subequations}

We claim that maximality of the size of $\mcal{G}$ implies $\mcal{S}$ is centred. Assume otherwise, that is, $\mcal{S}$ is non-centred. Then by Lemma~\ref{lem:peelingsymnk}, we have \[|\sym(n,k)[\mcal{S}]| \leq \dfrac{\max\{\st(n-1,k-1),\ \st(n-1,k)\}}{2}.\] Thus, for $n$ sufficiently large, using the above along with \eqref{eq:sppx}, we have 
\begin{align}\label{eq:noncenub}
|\mcal{G}| &\leq |\mcal{G}'|+|\sym(n,k)[\mcal{S}]|\notag \\
&\leq \max\{\st(n-1,k-1),\ \st(n-1,k)\} \times \left( \dfrac{\ln n}{n^{3}} + \dfrac{1}{2} \right) \notag \\
&\leq \max\{\st(n-1,k-1),\ \st(n-1,k)\} \times 0.55.
\end{align}
On the other hand, by Lemma~\ref{lem:HMlowerbound}, there is a non-centred family of size at least 
\[\max\{\st(n-1,k-1),\ \st(n-1,k)\} \times \left( 1-1/e - 0.05 \right),\] and thus by the maximality of the size of $\mcal{G}$, we have 
\[|\mcal{G}|\geq \max\{\st(n-1,k-1),\ \st(n-1,k)\} \times \left( 1-1/e -0.05 \right).\] 
This contradicts \eqref{eq:noncenub} for sufficiently large $n$, since
\[1-\dfrac{1}{e}-0.55-0.05>0.\]

Since $\mcal{S}$ is centred, there exist $r,s\in [n]$ such that \[\sym(n,k)[\mcal{S}] \subseteq \sym(n,k)[(r,s)].\] Thus, $\mcal{G} \subseteq \sym(n,k)[(r,s)] \cup \mcal{G}'$. Since $\mcal{G}$ is non-centred, there exists $\pi \in \mcal{G}'$ such that $\pi(r)\neq s$. Therefore, we have 
\[\mcal{G} \cap \sym(n,k)[(r,s)] \subseteq \sym(n,k)[(r,s);\pi]\quad (\text{cf.~\eqref{eq:HMfamily}}).\] 
Thus, using Lemma~\ref{lem:HMub}, for all $n$ sufficiently large, we have
\begin{align}\label{eq:general}
|\mcal{G} \cap \sym(n,k)[(r,s)]| &\leq |\sym(n,k)[(r,s);\pi]|\\
& \leq \left(\dfrac{2}{3} +\dfrac{\xi}{2} \right)\max\{\st(n-1,k-1),\ \st(n-1,k)\} \notag.
\end{align}
(Lemma~\ref{lem:HMub} does not apply in the case $k=1$ and $r=s$, but in that case $\sym(n,k)[(r,s);\pi]=\emptyset$, and so the above conclusion is still true.)
Therefore, for all $n$ sufficiently large, using the above along with \eqref{eq:sppx}, we have
\begin{align*}
|\mcal{G}| &\leq |\mcal{G} \cap \sym(n,k)[(r,s)]| + |\mcal{G}'| \\
&\leq \left(\dfrac{2}{3} +\dfrac{\xi}{2} +\dfrac{\ln n}{n^{3}}\right)\max\{\st(n-1,k-1),\ \st(n-1,k)\}\\
&\leq \left(\dfrac{2}{3} +\xi\right)\max\{\st(n-1,k-1),\ \st(n-1,k)\}.
\end{align*}
\qed
\subsection*{Proof of Theorem~\ref{thm:stabilityln}} 
The proof is identical to the proof of Theorem~\ref{thm:stability}, with  Lemma~\ref{lem:HMubln} being used in place of Lemma~\ref{lem:HMub}. The sharper bound in Lemma~\ref{lem:HMubln} allows us to replace the factor $2/3$ with $1-1/e$.\qed

\section{Conclusion}\label{sec:conclusion}
In this paper, we considered intersecting families in $\sym(n,k)$, the family of permutations on $[n]$ with exactly $k$ cycles. In the regime $k\leq n^{0.25}$, we prove an EKR theorem by bounding the sizes of non-centred intersecting families. In the polylogarithmic regime $k\leq (\ln n)^{d}$ our bound is asymptotically sharp; that is, there exist non-centred families (cf. Lemma~\ref{lem:HMlowerbound}) for which the ratio of their sizes to that of the upper bound in Theorem~\ref{thm:stabilityln} can be made arbitrarily close to $1$. 

We conjecture that the sharp bound in Theorem~\ref{thm:stabilityln} can be extended to $k\leq n^{\alpha}$ for $\alpha \in (0,1)$.
\begin{conj}
Given $\xi>0$ and $\alpha \in (0,1)$, for all sufficiently large $n$ and $k\leq n^{\alpha}$, if $\mcal{G} \subset \sym(n,k)$ is a non-centred intersecting family, then
\[|\mcal{G}|\leq \left(1-1/e +\xi \right) \max\{\st(n-1,k),\st(n-1,k-1)\}.\]
\end{conj}

We believe that this conjecture is amenable to the spread approximation method. Indeed, in \S~\ref{sec:spread} and \S~\ref{sec:spreadappx}, the exponent $0.25$ is not significant and can be replaced by any  $\alpha \in (0,1)$, while Lemmas~\ref{lem:spreadapproxsymnk} and \ref{lem:peelingsymnk} continue to hold; the proofs only require minor modifications. Therefore, given $\alpha \in (0,1)$, for all $n$ sufficiently large and $k\leq n^{\alpha}$, if $\mcal{G}\subset \sym(n,k)$ is an intersecting family, the inequalities in \eqref{eq:noncenub} hold. Since Lemma~\ref{lem:HMlowerbound} is true for the general $k\leq n^{\alpha}$ regime, up to \eqref{eq:general}, the proof of Theorem~\ref{thm:stability} remains valid even if the exponent $0.25$ is replaced by any $\alpha \in (0,1)$. Specifically, given $\alpha\in (0,1)$, for all $n$ sufficiently large and $k\leq n^{\alpha}$, if $\mcal{G}\subset \sym(n,k)$ is a maximum-sized intersecting family, then 
\[|\mcal{G}| \leq \max\limits_{(r,s,\pi)} |\sym(n,k)[(r,s);\pi]| +o(1)\max\{\st(n-1,k), \st(n-1,k-1)\},\] where $(r,s,\pi)$ run over all admissible triples (cf. Definition~\ref{def:admi}) and $\sym(n,k)[(r,s);\pi]$ is the family defined by \eqref{eq:HMfamily}. To prove the conjecture, it suffices to show 
\begin{equation}\label{eq:hmconj}
|\sym(n,k)[(r,s);\pi]| \leq (1-1/e+o(1))\max\{\st(n-1,k), \st(n-1,k-1)\}.
\end{equation}

Our attempt (cf. \S~\ref{sec:der}) at proving the bound in \eqref{eq:hmconj}, using the Bonferroni inequalities and the bound in Lemma~\ref{lem:overestimate} on the number of certain ``exceptional'' partial permutations, limits us to $\alpha \leq 0.25$ (see the discussion above Lemma~\ref{lem:bonfub}) and yields only a weaker bound in the regime $k\leq n^{0.25}$. However, in the regime $k\leq (\ln n)^{d}$, we prove the bound in \eqref{eq:hmconj}; see Lemma~\ref{lem:HMubln}.

Our stability results do not give a full analogue of Ellis \cite{Ellisstability} result for $\sym(n)$, as unlike in the case of $\sym(n)$, we do not achieve a characterization of maximum-sized non-centred intersecting families of $\sym(n,k)$. We conjecture that these families are exactly the Hilton--Milner families that we constructed in \S~\ref{sec:der}. 

\begin{conj}
Given $\alpha \in (0,1)$, for all sufficiently large $n$ and $k\leq n^{\alpha}$, if $\mcal{G} \subset \sym(n,k)$ is a non-centred intersecting family of the maximum possible size, then there exists $r,s\in [n]$ and $\pi \in \sym(n,k)$ with $\pi(r)\neq s$ such that
\[\mcal{G}=\{\sigma \in \sym(n,k)\colon \ \sigma(r)=s\ \text{and}\ \sigma \cap \pi\neq \emptyset\} \cup \{\pi\}.\]
\end{conj} 

It is natural to wonder about the case when $k$ is linear in $n$. In the case $k>3n/4$, a simple pigeon-hole argument shows that the entire family $\sym(n,k)$ is intersecting; see the paragraph following Question~\ref{ques:ekr}. Consider $k\geq \alpha n$, with $\alpha\in (0,3/4)$. Unlike in the sub-linear case above, the spread approximation method is not applicable in this case; we justify this below.

Let $k\geq \alpha n$ and assume that $\sym(n,k)$ is $r$-spread. Then we have 
\begin{align}\label{eq:linspread}
r &\leq \dfrac{|\sym(n,k)|}{|\sym(n,k)[(1,1)]|} \notag\\
r &\leq \dfrac{\st(n,k)}{\st(n-1,k-1)}\quad (\text{by Lemma~\ref{lem:size}})\notag \\
r &\leq (n-1)R(n-1,k)+1\quad (\text{by \eqref{eq:rec}}) \notag \\
r &\leq \dfrac{n-1}{\theta_{n-1,k-1}}+1\quad (\text{by \eqref{eq:pitman}}),
\end{align}   
where $\theta_{n-1,k-1}$ is the unique real root of $\sum\limits_{i=0}^{n-2} \dfrac{z}{z+i}-k+1$. Using Lemma~\ref{lem:thetaboundlin}, for all $n$ sufficiently large, we have $\theta_{n-1,k-1}>l_{\alpha}(n-1)$, where $l_{\alpha}$ is a constant independent of $n$.
Thus, by \eqref{eq:linspread} $r=O(1)$. We have shown that when $k\geq \alpha n$, $\sym(n,k)$ fails the spreadness requirement in the spread approximation theorem, Theorem~\ref{thm:spreadapproximation}, and a different approach is required. 
\begin{ques}
Characterise the maximum-sized intersecting families in $\sym(n,k)$, in the case $k=\Theta(n)$.
\end{ques}
\subsection*{Acknowledgements} The author thanks Sivaramakrishnan Sivasubramanian for suggesting this problem.
\bibliographystyle{plainurl}
\bibliography{sample}
\end{document}